\definecolor{pku}{RGB}{139,0,18}
\let\pa=\partial
\let\f=\frac
\let\pa=\partial
\let\al=\alpha
\let\b=\beta
\let\e=\varepsilon
\let\d=\delta
\let\g=\gamma
\let\ka=\kappa
\let\lam=\lambda
\let\Lam=\Lambda
\let\om=\omega
\let\r=\rho
\let\th=\theta
\def\Re{\mathrm{Re}}
\def\Im{\mathrm{Im}}
\newcommand{\beq}{\begin{equation}}
\newcommand{\eeq}{\end{equation}}
\newcommand{\beqo}{\begin{equation*}}
\newcommand{\eeqo}{\end{equation*}}
\newcommand{\ben}{\begin{eqnarray}}
\newcommand{\een}{\end{eqnarray}}
\newcommand{\beno}{\begin{eqnarray*}}
\newcommand{\eeno}{\end{eqnarray*}}
\newtheorem{thm}{Theorem}[section]
\newtheorem{lem}{Lemma}[section]
\newtheorem{cor}{Corollary}[section]
\newtheorem{prop}{Proposition}[section]
\theoremstyle{definition}
\theoremstyle{remark}
\newtheorem{step}{Step}
\newtheorem{rmk}{Remark}[section]
\newcommand{\pv}{\mathrm{p.v.}}
\newcommand{\CE}{\mathcal{E}}
\newcommand{\CL}{\mathcal{L}}
\newcommand{\CD}{\mathcal{D}}
\newcommand{\CI}{\mathcal{I}}
\newcommand{\CF}{\mathcal{F}}
\newcommand{\CH}{\mathcal{H}}
\newcommand{\CZ}{\mathcal{Z}}
\newcommand{\R}{\mathbb{R}}
\newcommand{\BC}{\mathbb{C}}
\newcommand{\BR}{\mathbb{R}}
\newcommand{\BZ}{\mathbb{Z}}
\newcommand{\BT}{\mathbb{T}}
\newcommand{\BN}{\mathbb{N}}
\newcommand{\T}{\mathbb{T}}
\numberwithin{equation}{section}
\begin{document}

\title{Geometric Properties of the 2-D Peskin Problem}
\author{Jiajun Tong}
\address{Beijing International Center for Mathematical Research, Peking University, China}
\email{tongj@bicmr.pku.edu.cn}
\author{Dongyi Wei}
\address{School of Mathematical Sciences, Peking University, China}
\email{jnwdyi@pku.edu.cn}

\date{\today}
\maketitle

\begin{abstract}
The 2-D Peskin problem describes a 1-D closed elastic string immersed and moving in a 2-D Stokes flow that is induced by its own elastic force.
The geometric shape of the string and its internal stretching configuration evolve in a coupled way, and they combined govern the dynamics of the system.
In this paper, we show that certain geometric quantities of the moving string satisfy extremum principles and decay estimates.
As a result, we can prove that the 2-D Peskin problem admits a unique global solution when the initial data satisfies a medium-size geometric condition on
the string shape, while no assumption on the size of stretching is needed.
\end{abstract}

\section{Introduction}
\label{sec: 2D Peskin}

\subsection{The 2-D Peskin problem}
In this paper, we study the 2-D Peskin problem, also known as the Stokes immersed boundary problem in 2-D.
It describes a 1-D closed elastic string immersed and moving in a 2-D Stokes flow induced by itself.
With $s\in \BT = \BR/(2\pi \BZ)$ being the Lagrangian coordinate, we use $X = X(s,t)\in \BR^2$ to represent position of the string point with the Lagrangian label $s$ at time $t$.
Then the 2-D Peskin problem reads that
\begin{align}
&\;-\Delta u+ \nabla p = \int_{\mathbb{T}}F_X(s,t)\delta(x-X(s,t))\,ds,\label{eqn: Stokes equation}\\
&\;\mathrm{div}\, u = 0,\quad |u|,|p|\rightarrow 0\mbox{ as }|x|\rightarrow \infty,\label{eqn: incompressible condition}\\
&\;\frac{\partial X}{\partial t}(s,t) = u(X(s,t),t),\quad
X(s,0) = X_0(s).\label{eqn: motion of the membrane and initial configuration}
\end{align}
Here \eqref{eqn: Stokes equation} and \eqref{eqn: incompressible condition} are the stationary Stokes equation.
We use $u = u(x,t)$ and $p = p(x,t)$ to denote the flow velocity field and the pressure in $\BR^2$, respectively.
The right-hand side of \eqref{eqn: Stokes equation} gives the force applied to the fluid that is singularly supported along the curve
$X(\BT,t): = \{X(s,t):\,s\in \BT\}$. 
$F_X$ is the elastic force density in the Lagrangian coordinate.
In general \cite{peskin2002immersed},
\begin{equation}
F_X(s,t) = \partial_s\left(\mathcal{T}(|X'(s,t)|, s,t)\frac{X'(s,t)}{|X'(s,t)|}\right).
\label{eqn: Lagrangian representation of the elastic force}
\end{equation}
For brevity, here and in what follows, we shall use $X'(s,t)$ and $X''(s,t)$ to denote $\partial_s X(s,t)$ and $\partial_{ss} X(s,t)$, respectively.
In \eqref{eqn: Lagrangian representation of the elastic force},
$\mathcal{T}$ represents the tension along the string, which is determined by the elastic property of the string material.
In the simple case of Hookean elasticity, 
$\mathcal{T}(|X'(s,t)|,s,t) = k_0 |X'(s,t)|$ with $k_0>0$ being the Hooke's constant, so $F_X(s,t) = k_0X''(s,t)$.
\eqref{eqn: motion of the membrane and initial configuration} specifies that each string point should move with the flow.

In early 1970s, aiming at studying blood flows around heart valves, Peskin \cite{peskin2002immersed,peskin1972flow} introduced the general immersed boundary problem as well as the well-known numerical immersed boundary method.
Since then, the latter has been extensively investigated in numerical analysis and successfully applied to a wide range of fluid-structure
interaction problems in physics, biology, medical sciences, and engineering.
However, rigorous analytical studies of the immersed boundary problem have been absent until recent years.
In the independent works \cite{lin2019solvability} and \cite{mori2019well}, the authors considered the Stokes case of the 2-D immersed boundary problem, and reformulated the model \eqref{eqn: Stokes equation}-\eqref{eqn: motion of the membrane and initial configuration} with $F_{X} = X''(s,t)$ into a contour dynamic equation in the Lagrangian coordinate
\beq
\pa_t X(s,t)
=
\int_\BT G(X(s,t)-X(s',t)) X''(s',t)\,ds',\quad X(s,0) = X_0(s),
\label{eqn: contour dynamic equation}
\eeq
where $G(\cdot)$ denotes the 2-D Stokeslet
\beq
G_{ij}(x) = \f{1}{4 \pi}\left(-\ln |x|\d_{ij} + \f{x_i x_j}{|x|^2}\right),\quad i,j = 1,2.
\label{eqn: Stokeslet in 2-D}
\eeq
After integration by parts, \eqref{eqn: contour dynamic equation} becomes
\beq
\begin{split}
\pa_t X(s)
= &\;
-\pv \int_\BT \pa_{s'}[G(X(s,t)-X(s',t))] X'(s',t)\,ds'\\
= &\;\frac{1}{4\pi}\mathrm{p.v.}\int_{\mathbb{T}} \left[- \frac{|X'(s')|^2}{|X(s')-X(s)|^2}+\frac{2((X(s')-X(s))\cdot X'(s'))^2}{|X(s')-X(s)|^4}\right] (X(s')-X(s))\,ds'.
\end{split}
\label{eqn: contour dynamic equation integration by parts}
\eeq
Based on this, well-posedness of the Cauchy problem of the string evolution was studied. 
It was shown that, if the initial data $X(s,0) = X_0(s)$ lies in some subcritical regularity classes, the Sobolev space $H^{5/2}(\BT)$ in \cite{lin2019solvability} and the little H\"{o}lder space $h^{1,\al}(\BT)$ in \cite{mori2019well} respectively, and satisfies the so called \emph{well-stretched condition}, then there exists a unique local-in-time solution.
A general string configuration $Y(s)$ is said to satisfy the well-stretched condition with constant $\lam>0$ if for any $s_1,s_2\in \BT$,
\beq
\big|Y(s_1)-Y(s_2)\big|\geq \lam |s_1-s_2|_{\T}.
\label{eqn: well-stretched condition}
\eeq
Here $|s_1-s_2|_{\T}:=\min\{|s_1-s_2-2k\pi|:k\in \mathbb{Z}\}$.
For future use, we shall call the largest possible such constant $\lam$ as the \emph{best well-stretched constant} of $Y$.
If the initial data $X_0$ turns out to be sufficiently close to an equilibrium, which is an evenly-parameterized circular configuration, both works showed that the solution should exist globally up to $t = \infty$, and $X(\cdot,t)$ will converge exponentially to an equilibrium configuration as $t\to +\infty$.
In addition to these well-posedness results, \cite{mori2019well} showed that the solutions enjoy instant smoothing at positive times.
It also proved a blow-up criterion, stating that if a solution fails at a finite time, then either $\|X(\cdot,t)\|_{\dot{C}^{1,\alpha}}$ goes to infinity, or the best well-stretched constant shrinks to zero (or both occur at the same time).

After these breakthroughs, there have been many studies devoted to establishing well-posedness under scaling-critical regularities, which refer to $W^{1,\infty}(\BT)$ and other regularity classes with equivalent scaling.
\cite{garcia2020peskin} considered the scenario where the fluids can have different viscosities in the interior and the exterior of the string, and showed global well-posedness for medium-size initial data in the critical Wiener space $\CF^{1,1}(\BT)$.
\cite{gancedo2020global} introduced a toy scalar model that captures the string motion in the normal direction, and proved global well-posedness for small initial data in the critical Lipschitz class $W^{1,\infty}(\BT)$.
\cite{chen2021peskin} established well-posedness of the original 2-D Peskin problem in the critical space $\dot{B}_{\infty,\infty}^{1}(\BT)$, which contains non-Lipschitz functions.

To handle general elastic laws beyond the Hookean elasticity, \cite{rodenberg20182d} and the recent work \cite{cameron2021critical} extended the analysis to fully nonlinear Peskin problems.
Very recently, \cite{GarciaJuarez2023WellPosednessOT} introduced the 3-D Peskin problem and proved its well-posedness with general nonlinear elastic laws.
Readers are also directed to \cite{li2021stability} for a study on the case where the elastic string has both stretching and bending energy.

To better understand the nonlinear structure of the Peskin problem as well as its possible blow-up mechanisms, the first author of this paper recently derived a new scalar model called the tangential Peskin problem \cite{Tong2022GlobalST}.
It considers the 2-D Peskin problem in the case of an infinitely long and straight 1-D elastic string deforming only tangentially.
Global solutions were constructed for initial datum in the energy class $H^1$, and their properties were characterized.
Let us highlight that, despite its very different geometric setup, this problem turns out to have a surprising connection with the original 2-D Peskin problem with a closed circular string;
see Theorem \ref{thm: short version of the thm for circular initial data} or Theorem \ref{thm: circular string} below.

Let us also mention that the first author of this paper introduced the regularized Peskin problem in \cite{tong2021regularized}, and proved its well-posedness and convergence to the original Peskin problem as the regularization diminishes.

The Peskin problem has noticeable mathematical similarities with several other evolution free boundary problems, especially the Muskat problem (see \cite{muskat1934two,Ambrose2004WellposednessOT,cordoba2011,constantin2012global,cordoba2013porous,constantin2016muskat,deng2017two,cameron2018global,
Crdoba2018GlobalWF,gancedo2019muskat,Alazard2020EndpointST} and the references therein).
Nevertheless, a distinct feature of the Peskin problem is that the elastic string is not just a time-varying curve $X(\BT,t): = \{X(s,t)\in \BR^2:\,s\in \BT\}$, but it has internal elastic structure.
More precisely, the planar curve $X(\BT,t)$ only captures \emph{geometric information} of the string, with the function $(s,t)\mapsto X'(s,t)/|X'(s,t)|$ representing its unit tangent vector,
whereas the function $(s,t)\mapsto |X'(s,t)|$ encodes the \emph{stretching configuration}.
Strings with identical shape may have quite different internal stretching configurations, leading to different dynamics.
Thus, we need to track both the geometric and stretching information of the string at the same time, while they evolve in a strongly coupled way.

\subsection{Main results}
In this paper, we point out that
certain \emph{geometric} quantities of the curve $X(\BT,t)$ enjoy extremum principles and decay estimates.
This holds regardless of the stretching configuration of the string.
As a result, we can prove that, if the initial data satisfies some medium-size \emph{geometric} condition in addition to those classic assumptions, the 2-D Peskin problem admits a unique global solution.
No additional restriction is needed on size of the stretching $|X'(s,t)|$.
We can also show that the global solution converges exponentially to a final equilibrium state as $t\to +\infty$.

In the rest of the paper, we shall treat $X = X(s,t)$ as a complex-valued function instead of a vector-valued function, i.e., $X = X_1+iX_2$ with $X_1,X_2\in \BR$.
Then \eqref{eqn: contour dynamic equation integration by parts} can be recast as
\beq
\begin{split}
\pa_t X(s)
= &\; \f{1}{4\pi}\mathrm{p.v.}\int_\BT \Re \left[\f{X'(s')^2}{(X(s')-X(s))^2} \right] \big(X(s')-X(s)\big) \,ds'\\
= &\; \f{1}{4\pi}\pv\int_\BT \f{X'(s')^2}{X(s')-X(s)}\,ds'
 - \f{i}{4\pi}\int_\BT \Im \left[\f{X'(s')^2}{(X(s')-X(s))^2} \right] \big(X(s')-X(s)\big) \,ds'.
\end{split}
\label{eqn: contour dynamic equation complex form}
\eeq
Here and in what follows, the time dependence will get omitted whenever it is convenient.
\eqref{eqn: contour dynamic equation complex form} is equipped with the initial condition
\beq
X(s,0) = X_0(s).
\label{eqn: initial condition}
\eeq

Before we state the main results, let us introduce some notations.
Given $X(\cdot,t)$, we define $R_X>0$ by
\beq
\pi R_X^2 = \f12 \int_{\T}\Im\big[\overline{X(s')}X'(s')\big]\,ds'.
\label{eqn: effective radius}
\eeq
The right-hand side gives the area of the planar region enclosed by the curve $X(\BT,t)$, so $R_X$ will be referred as the \emph{effective radius} of $X$.
When $X(s,t)$ is a sufficiently smooth solution to \eqref{eqn: contour dynamic equation complex form}, $R_X$ is time-invariant since the flow field in $\BR^2$ is divergence-free and the area is conserved (see \cite{lin2019solvability,mori2019well}).

Assume that $X(\cdot,t)\in C^1(\BT)$ satisfies the well-stretched condition defined in
\eqref{eqn: well-stretched condition}.
This implies that $X(s_1,t)\neq X(s_2,t)$ for distinct $s_1,s_2\in \BT$, and that $\inf_{s\in \BT}|X'(s,t)|>0$.
Then we let
\beq
\Phi(s_1,s_2,t):=
\begin{cases}
\arg\left[\f{X'(s_1,t) X'(s_2,t)}{(X(s_1,t)-X(s_2,t))^2}\right], & \mbox{if } s_1,s_2\in \BT,\;s_1\neq s_2, \\
0, & \mbox{if }s_1 =s_2\in \BT.
\end{cases}
\label{eqn: def of Phi}
\eeq
Under the above assumptions, the angle $\Phi(\cdot,\cdot,t)$ is pointwise well-defined on $\BT\times \BT$ with its value in $\BT$.
It is fully determined by the geometric shape of $X(\BT,t)$, but hardly depends on the internal stretching configuration $|X'|$.
If $X(\BT)$ is in a perfectly circular shape, then $\Phi(s_1,s_2)\equiv 0$ for all $s_1,s_2\in \BT$; in fact, the converse is also true (see Lemma \ref{lem: zero Phi_+ or Phi_-}).
In general, $\Phi(s_1,s_2)$ measures the asymmetry of the curve $X(\BT)$ when observed from the points $X(s_1)$ and $X(s_2)$.
It is invariant under translation, rotation, and dilation of the system.
In our analysis below, $\Phi$ will play an extremely important role.

Given $\al\in (0,1)$ and a function $Y:\BT\to \BC$, we recall definitions of the $C^{1,\al}(\BT)$-semi-norm and the $C^{1,\al}(\BT)$-norm:
\begin{align*}
\|Y\|_{\dot{C}^{1,\al}(\BT)} := &\; \sup_{s_1,s_2\in \BT\atop s_1\neq s_2} \f{|Y'(s_1)-Y'(s_2)|}{|s_1-s_2|_\BT^\al},\\
\|Y\|_{C^{1,\al}(\BT)} := &\; \|Y\|_{L^\infty(\BT)}+\|Y'\|_{L^\infty(\BT)}
+\|Y\|_{\dot{C}^{1,\al}(\BT)}.
\end{align*}
We use $h^{1,\al}(\BT)$ to denote the little H\"{o}lder space, which is the completion of $C^\infty(\BT)$ under the $C^{1,\al}(\BT)$-norm.

Now we can state our main result.

\begin{thm}
\label{thm: main thm}
Suppose that $X_0(s)\in h^{1,\al}(\BT)$ for some $\al\in(0,1)$, and it satisfies the well-stretched condition.
Let $R_X$ be defined in \eqref{eqn: effective radius} in terms of $X_0$.
Define $\Phi_0(s,s')$ in terms of $X_0$ as in \eqref{eqn: def of Phi}.
If $|\Phi_0(s,s')|<\pi/4$ for all $s,s'\in \BT$, then \eqref{eqn: contour dynamic equation complex form} and \eqref{eqn: initial condition} admit a unique global solution $X = X(s,t)$ in the class $C_{loc}([0,+\infty);C^{1,\al}(\BT))\cap C_{loc}^1([0,+\infty);C^{\al}(\BT))$, in the sense that
\[
\mbox{$X(s,t)$ satisfies \eqref{eqn: contour dynamic equation complex form} in $\BT\times (0,+\infty)$, and
$X(\cdot,t)\to X_0(\cdot)$ in $C^{1,\al}(\BT)$ as $t\to 0$.}
\]
Moreover, the solution has the following properties:
\begin{enumerate}[label = (\roman*)]
\item For any $\d>0$ and any $k\in\BN$, $X\in C_{loc}^1([\d,+\infty); C^k(\BT))$.
\item There exists a universal non-negative and strictly increasing function $\lam_\circ(t)$ defined on $[0,+\infty)$, with $\lam_\circ(0) = 0$, such that $X(\cdot,t)$ satisfies the well-stretched condition with constant $\lam_\circ(t)R_X$.

\item Let $\Phi_*(t):=\sup_{s,s'\in \BT}|\Phi(s,s',t)|$.
Then $\Phi_*(t)$ is continuous and non-increasing on $[0,+\infty)$, and
\[
0\leq \Phi_*(t)\leq \Phi_*(0) \min\big\{e^{- \mu t},\,Ce^{-t/\pi^2}\big\},
\]
where $\mu = (4-2\sqrt{2})/\pi^3$ and where $C>0$ is a universal constant.

\item
Let $\ka(s,t)$ be the curvature of the curve $X(\BT,t)$ at the point $X(s,t)$, given by
\[
\kappa(s,t)=\f{\Im[X''(s,t)/X'(s,t)]}{|X'(s,t)|}.
\]
Then there exist universal constants $C,c>0$, such that for any $t>0$,
\beqo
\|\kappa(s,t)R_X-1\|_{L^\infty(\BT)}
\leq C \exp\left[C \left(\int_0^t \cos 2\Phi_*(\tau)\,d\tau\right)^{-1} -ct\right].
\eeqo

\item
There exist $x_\infty\in \BC$ and $\xi_\infty\in \BT$, such that, $X(\cdot,t)$ converges exponentially to an equilibrium
\[
X_\infty(s) := x_\infty + R_X e^{i(s+\xi_\infty)}
\]
in $H^2(\BT)$ as $t\to +\infty$.
More precisely, for any $\d>0$, there exists a constant $C>0$ depending on $\d$ and $X_0$, and a universal constant $c>0$ not depending on $\d$ or $X_0$, such that
\[
\|X(\cdot,t)-X_\infty(\cdot)\|_{H^2(\BT)} \leq C e^{-ct}
\]
for all $t\geq \d$.
\end{enumerate}
\end{thm}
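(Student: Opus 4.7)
Local existence of a solution $X\in C_{loc}([0,T^*);C^{1,\al}(\BT))$ under the well-stretched hypothesis is available from \cite{mori2019well}, together with the blow-up criterion stating that $T^*$ can be extended unless $\|X(\cdot,t)\|_{\dot C^{1,\al}}\to\infty$ or the best well-stretched constant shrinks to zero. My task is therefore to show, starting from $\Phi_*(0)<\pi/4$, that neither scenario can occur, and then to extract the quantitative information in (i)--(v). The central object is the angle $\Phi(s_1,s_2,t)$ from \eqref{eqn: def of Phi}, which is purely geometric: it is invariant under reparameterization of $s$, so any bound on $\Phi_*$ translates into intrinsic bounds on the curvature and chord-arc constant of the curve $X(\BT,t)$ alone.

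The heart of the proof is an extremum principle for $\Phi_*(t)$. I would fix a maximizing pair $(s_1,s_2)$ at which $|\Phi(\cdot,\cdot,t)|$ attains its supremum and compute $\partial_t\Phi(s_1,s_2,t)$ by inserting \eqref{eqn: contour dynamic equation complex form} and the identity $\partial_t\arg Z=\Im(\partial_t Z/Z)$ into the definition \eqref{eqn: def of Phi}. After regrouping the principal-value contributions this should produce an integral in an auxiliary point $s'$ whose kernel factors as a positive weight (depending on $|X'|$ and chord lengths) times a trigonometric expression in the three angles $\Phi(s_1,s',t),\Phi(s_2,s',t),\Phi(s_1,s_2,t)$. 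At the maximum, the first-variation conditions in $s_1,s_2$ should reduce this trigonometric part to a definite-sign quantity controlled by $-\cos 2\Phi(s_1,s_2,t)\sin|\Phi(s_1,s_2,t)|$, which is strictly negative precisely under the threshold $|\Phi|<\pi/4$. This yields the monotonicity of $\Phi_*$, and a quantification of the strict inequality produces the exponential rate $\mu=(4-2\sqrt 2)/\pi^3$; the sharper rate $Ce^{-t/\pi^2}$ should come from a linearization around the circular equilibrium valid once $\Phi_*$ is small.

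With $\Phi_*<\pi/4$ globally in force, the remaining parts follow a delicate but relatively standard program. The uniform bound forces the curve $X(\BT,t)$, which encloses the fixed area $\pi R_X^2$, to be quantitatively near-circular, and a purely geometric lemma then produces the universal, monotone function $\lam_\circ(t) R_X$ in (ii). Parabolic smoothing in the spirit of \cite{mori2019well}, applied to the nonlinear principal part of \eqref{eqn: contour dynamic equation complex form}, which is now uniformly parabolic thanks to the quantitative well-stretched bound, propagates $C^{1,\al}$ regularity, rules out $\dot C^{1,\al}$-blow-up, and delivers the instant smoothing in (i). For (iv) I would derive the evolution equation for $w(s,t):=\kappa(s,t) R_X-1$, show that its principal part is a nonnegative fractional dissipation of order $|D|$ along arclength weighted by $\cos 2\Phi_*$, with forcing quadratic in $\Phi_*$, and combine a Duhamel argument with (iii) to obtain the displayed mixed bound $\exp[C(\int_0^t\cos 2\Phi_*)^{-1}-ct]$. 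Finally (v) is a compactness argument: the uniform $C^k$ bounds extract subsequential limits, which by Lemma \ref{lem: zero Phi_+ or Phi_-} must be evenly parameterized circles of radius $R_X$; uniqueness of the limit, together with an $H^2$ linear-stability estimate at the target equilibrium, upgrades convergence to exponential-in-time.

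The main obstacle is clearly the extremum principle in the second paragraph: the sign of the integrand is invisible before the full cancellation of real and imaginary parts of the Stokeslet-induced singular integrals, and any inefficiency in this cancellation would degrade both the sharp threshold $\pi/4$ and the rate $\mu$. All other steps, while technically involved, follow once this geometric bound is in hand.
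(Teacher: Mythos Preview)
Your identification of the $\Phi_*$-extremum principle as the core step is correct, and the paper indeed proceeds by computing $\partial_t\Phi(s_1,s_2)$ at a maximizing pair and showing the integrand has a sign. One small correction: the sign is not obtained through first-variation conditions in $s_1,s_2$, but through a complex-analytic observation (Lemma~\ref{lem: range of argument of theta}) that $\theta(s',s_1,s_2)=\Phi(s',s_1)+\Phi(s',s_2)-\Phi(s_1,s_2)\in[2\Phi_-,2\Phi_+]$ for \emph{every} $s'$, because $\theta=2\arg\tilde J$ for a certain integral of $J$. This is what forces $\sin\theta-\sin 2\Phi_j\leq 0$ pointwise at the maximum. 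Also, both decay rates come from the same differential inequality $\Phi_*'\leq-\pi^{-2}(\sin 2\Phi_*-\sin\Phi_*)$, not from a separate linearization.

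The genuine gap is in your treatment of (ii) and the continuation argument. The well-stretched condition $|X(s_1)-X(s_2)|\geq\lambda|s_1-s_2|_{\BT}$ is \emph{not} a geometric statement: it depends on the Lagrangian parameterization, hence on the stretching $|X'|$. The bound $\Phi_*<\pi/4$ yields only a chord-arc estimate $|X(s_1)-X(s_2)|\geq cL(s_1,s_2)$ in terms of arclength (Proposition~\ref{prop: geometric property of the curve}); converting this to $|s_1-s_2|_{\BT}$ requires a \emph{lower bound on} $\min_s|X'(s,t)|$, which the initial data does not provide in any universal way. Likewise, ruling out $\dot C^{1,\alpha}$-blow-up cannot be ``purely geometric'' for the same reason: $\Phi_*$ is reparameterization-invariant, so it places no constraint on $\partial_s|X'|$. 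The paper closes both gaps through a separate analysis of the stretching (Section~\ref{sec: estimates for |X'|}): a maximum principle for $|X'|$, a lower bound $\min_s|X'(s,t)|\geq R_X\exp[-2\coth(c\int_0^t\cos 2\Phi_*\,d\tau)]$ obtained from the evolution equation \eqref{eqn: eqn for |X'|} (this is the origin of $\lambda_\circ(0)=0$), and an $L^2$-estimate for $Z=\partial_s\ln|X'|$ that feeds into a uniform $\|X''\|_{L^2}$ bound (Corollary~\ref{cor: uniform boundedness for X''}). It is this $H^2$ bound, via Sobolev embedding into $C^{1,1/2}$, that actually rules out $\dot C^{1,\alpha}$-blow-up. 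Your plan as written contains no mechanism for controlling $|X'|$ from below or $\partial_s|X'|$ in any norm, so the continuation argument would not close.

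For (iv), the paper does not use Duhamel but an extremum principle for $\kappa_\pm$ based on the equation \eqref{eqn: eqn for curvature} and a nontrivial geometric lemma (Lemma~\ref{lem: the curve is outside the osculating circle}) comparing $\Im[I(s,s')/|X'(s)|]$ to $\kappa_\pm/2$. For (v), the paper argues directly that $X''-iX'\to 0$ in $L^2$ (from the $Z$-decay and $\kappa\to R_X^{-1}$) and that $\partial_t X\to 0$ uniformly, rather than via compactness plus linear stability; your route could work but would still need the $|X'|$-estimates you are currently missing.
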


In addition to the classic regularity assumption and the well-stretched condition on the initial data, Theorem \ref{thm: main thm} only adds a \emph{geometric} condition $|\Phi_0|<\pi/4$ to guarantee the global well-posedness, and imposes no restriction on the size of stretching of the string, which is very different from all the existing results on global well-posedness \cite{lin2019solvability,mori2019well,garcia2020peskin,chen2021peskin}.
The threshold $\pi/4$ reflects the nature of the Stokeslet.
We will see later that, if $X_0(s)$ is $O(1)$-close to an equilibrium in the $C^1(\BT)$-seminorm, the corresponding $\Phi_0$ will satisfy $|\Phi_0|<\pi/4$.
More generally, if we additionally let $\psi:\BT\to\BT$ be an arbitrary bijective diffeomorphism that is suitably smooth, and define $\tilde{\Phi}_0$ in terms of the reparameterized configuration  $\tilde{X}_0 = X_0\circ\psi$ with $X_0$ as above, then $|\tilde{\Phi}_0|<\pi/4$ since $\tilde{\Phi}_0(s,s') =\Phi_0(\psi(s),\psi(s'))$.
Thus, Theorem \ref{thm: main thm} applies to such initial data $X_0\circ\psi$, and there is no restriction on the size of $ \psi$.
See the precise statements in Remark \ref{rmk: initial data C1 close to equilibrium}.
In view of this (and also Proposition \ref{prop: geometric property of the curve}), the condition $|\Phi_0|<\pi/4$ can be understood as a medium-size $C^1$-condition on the geometry of the initial string.

A key ingredient in proving Theorem \ref{thm: main thm} is to show that, under the condition $|\Phi_0(s,s')|<\pi/4$, $\Phi(s,s',t)$ and $\ka(s,t)$ satisfy some extremum principles and decay estimates (see Proposition \ref{prop: bound for the max |Phi|} and Proposition \ref{prop: max principle and decay estimate for curvature}, respectively).
Such extremum principles can even be generalized to the case of general elasticity; see Section \ref{sec: general elasticity} and Remark \ref{rmk: general elasticity curvature}.

We use the sub-critical assumption $X_0\in h^{1,\al}(\BT)$ here for convenience.
One may be able to replace it by a weaker regularity assumption on the stretching function $|X'(s,t)|$ and a separate geometric assumption on the curve $X(\BT)$.
However in that case, the analysis would become much more involved.
In order to highlight our findings and not to distract the readers, we decide to work with the current assumptions, and leave further generalizations to future works.

With that being said, in the special case when $X_0(\BT)$ is a circle but $X_0(s)$ is not necessarily an equilibrium, it is relatively straightforward to weaken the assumption to $X_0\in H^1(\BT)$. 
Note that $H^1(\BT)$ is the energy class for the 2-D Peskin problem in the Hookean case, which is super-critical according to scaling of the equation.
For $H^1$-circular initial data satisfying some other natural assumptions, we can construct a global solution $X(s,t)$, with $X(\BT,t)$ being a circle of the same radius for all time.
Motion of the center of the circle can be determined, and surprisingly, the tangential deformation of the string is described \emph{exactly} by the tangential Peskin problem \cite{Tong2022GlobalST}.

\begin{thm}[Short version of Theorem \ref{thm: circular string}]
\label{thm: short version of the thm for circular initial data}

Assume $X_0\in H^1(\BT)$, such that $X_0(\BT)$ is a circle of radius $R_X$ centered at $x_0\in \BC$.
Suppose for a strictly increasing continuous function $\th_0:\BR\to \BR$ which satisfies $\th_0(s+2\pi) = \th_0(s) + 2\pi$ in $\BR$, it holds that $X_0(s) = x_0+ R_X e^{i\th_0(s)}$ in the notation of complex numbers.
Under suitable assumptions on $\th_0(s)$, \eqref{eqn: contour dynamic equation complex form} and \eqref{eqn: initial condition} admit a global solution $X = X(s,t)$ in $\BT \times [0,+\infty)$, which can be constructed as follows.
Let $\th = \th(s,t)$ be a solution to the tangential Peskin problem \cite[Corollary 2.1]{Tong2022GlobalST}
\[
\pa_t \th(s,t) = -\f{1}{4\pi} \pv \int_{\BR}\f{(\pa_{s'}\th(s',t))^2}{\th(s,t)-\th(s',t)}\,ds',\quad \th(s,0) = \th_0(s)
\]
in $\BR\times [0,+\infty)$.
Let
\beqo
v(t):= -\f{R_X}{8\pi}
\int_\BT e^{i\th(s',t)} \big(\pa_{s'}\th(s',t)\big)^2 \,ds'.
\eeqo
Then
\beqo
X(s,t) = x(t) + R_X e^{i\th(s,t)} \mbox{ with } x(t): = x_0+\int_0^t v(\tau) \,d\tau
\eeqo
gives the desired solution to \eqref{eqn: contour dynamic equation complex form} and \eqref{eqn: initial condition}.
Moreover, $X(s,t)$ is smooth in $\BT\times (0,+\infty)$.
For any $t> 0$, $X(\BT,t)$ is a circle, and $X(\cdot,t)$ satisfies the well-stretched condition.
For arbitrary $k\in \BN$, $X(s,t)$ converges exponentially to a final equilibrium configuration $X_\infty(s)$ in $H^k(\BT)$-norms as $t\to +\infty$.
\end{thm}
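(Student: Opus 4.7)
The plan is to insert the ansatz $X(s,t)=x(t)+R_Xe^{i\theta(s,t)}$ into the complex Peskin equation \eqref{eqn: contour dynamic equation complex form} and verify, by a direct algebraic reduction, that it holds precisely when $\theta$ solves the tangential Peskin equation and $\dot{x}=v$. Once this reduction is in place, all remaining assertions (global existence, smoothing, well-stretchedness, exponential convergence) transfer from the corresponding properties of the tangential Peskin problem in \cite{Tong2022GlobalST}.

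First I would compute $X'(s')=iR_Xe^{i\theta(s')}\pa_{s'}\theta$ and $X(s')-X(s)=2iR_Xe^{i(\theta(s')+\theta(s))/2}\sin\tfrac{\theta(s')-\theta(s)}{2}$, and substitute these into the two integrals $I_1,I_2$ on the right-hand side of \eqref{eqn: contour dynamic equation complex form}. Using $e^{i(\theta(s')+\theta(s))/2}\cos\tfrac{\theta(s')-\theta(s)}{2}=\tfrac12(e^{i\theta(s')}+e^{i\theta(s)})$ for the $I_2$-integrand and $\tfrac{e^{i\phi/2}}{\sin(\phi/2)}=\cot\tfrac{\phi}{2}+i$ for the $I_1$-integrand, the two non-singular $(\pa_{s'}\theta)^2e^{i\theta(s')}$ integrals produced by $I_1$ and $I_2$ combine to $v(t)+(-v(t))=0$, and the rest assembles into
\[
I_1+I_2=\f{iR_X}{8\pi}\pv\int_{\BT}e^{i\theta(s')}(\pa_{s'}\theta)^2\cot\tfrac{\theta(s')-\theta(s)}{2}\,ds'+\f{R_Xe^{i\theta(s)}}{8\pi}\int_{\BT}(\pa_{s'}\theta)^2\,ds'.
\]
Next I would factor $e^{i\theta(s')}=e^{i\theta(s)}e^{i(\theta(s')-\theta(s))}$ inside the singular integral and apply the elementary identity $e^{i\phi}\cot\tfrac{\phi}{2}=\cot\tfrac{\phi}{2}-\sin\phi+i(1+\cos\phi)$. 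The $\cot$ piece produces $\tfrac{iR_Xe^{i\theta(s)}}{8\pi}\pv\int(\pa_{s'}\theta)^2\cot\tfrac{\theta(s')-\theta(s)}{2}\,ds'$; the ``$i$'' piece exactly cancels the leftover $(\pa_{s'}\theta)^2$-integral above; and the residual $-\sin\phi+i\cos\phi$ contributions recombine into $-\tfrac{R_X}{8\pi}\int e^{i\theta(s')}(\pa_{s'}\theta)^2\,ds'=v(t)$. Matching with $\pa_tX=v(t)+iR_Xe^{i\theta(s)}\pa_t\theta$, the $v(t)$'s cancel and the Peskin equation reduces to the scalar identity
\[
\pa_t\theta(s,t)=\f{1}{8\pi}\pv\int_{\BT}(\pa_{s'}\theta(s',t))^2\cot\tfrac{\theta(s')-\theta(s)}{2}\,ds'.
\]

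Third, I would identify this periodic scalar equation with the $\BR$-formulated tangential Peskin problem. Using $\theta(s'+2\pi k)=\theta(s')+2\pi k$ and $2\pi$-periodicity of $\pa_{s'}\theta$, slicing $\int_{\BR}$ over intervals $[2\pi k-\pi,2\pi k+\pi]$, and applying the Mittag-Leffler identity $\pv\sum_{k\in\BZ}\tfrac{1}{A-2\pi k}=\tfrac12\cot\tfrac{A}{2}$ gives
\[
-\f{1}{4\pi}\pv\int_{\BR}\f{(\pa_{s'}\theta)^2}{\theta(s)-\theta(s')}\,ds'=\f{1}{8\pi}\pv\int_{\BT}(\pa_{s'}\theta)^2\cot\tfrac{\theta(s')-\theta(s)}{2}\,ds',
\]
so the scalar equation above is literally the tangential Peskin equation. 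Hence, starting from a global solution $\theta$ of the tangential Peskin problem (supplied by \cite{Tong2022GlobalST} under the hypotheses on $\theta_0$), the ansatz yields a global solution $X$ of \eqref{eqn: contour dynamic equation complex form}.

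Finally, the qualitative properties pass cleanly from $\theta$ to $X$: instant smoothing for $\theta$ gives smoothness of $X$ on $\BT\times(0,\infty)$; the identity $|X(s_1)-X(s_2)|=2R_X|\sin\tfrac{\theta(s_1)-\theta(s_2)}{2}|$ together with strict monotonicity of $\theta(\cdot,t)$ for $t>0$ yields the well-stretched condition; and exponential convergence $\theta(\cdot,t)\to s+\xi_\infty$ in every $H^k(\BT)$ forces $v(t)\to 0$ exponentially (note that $v$ vanishes at any equilibrium $\theta(s)=s+\xi$ since $\int_\BT e^{i(s'+\xi)}\,ds'=0$), so $x(t)=x_0+\int_0^tv(\tau)\,d\tau$ converges exponentially to some $x_\infty$, and consequently $X(\cdot,t)\to x_\infty+R_Xe^{i(\cdot+\xi_\infty)}$ exponentially in every $H^k$. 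The main obstacle is the bookkeeping in the second step: one has to verify that three non-singular contributions arising from $e^{i\phi}\cot\tfrac{\phi}{2}=\cot\tfrac{\phi}{2}-\sin\phi+i(1+\cos\phi)$ cancel or reassemble precisely to produce $v(t)$ and to kill the leftover $e^{i\theta(s)}$-term. Once that algebraic miracle is confirmed, the equivalence with the tangential Peskin equation (Step 3) is a standard partial-fraction identity and the property transfer (Step 4) is essentially routine.
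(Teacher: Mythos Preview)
Your proposal is correct. The direct substitution of the ansatz $X(s,t)=x(t)+R_Xe^{i\theta(s,t)}$ into \eqref{eqn: contour dynamic equation complex form} and the trigonometric reduction you outline do indeed collapse the equation to the cotangent form $\partial_t\theta=\frac{1}{8\pi}\,\mathrm{p.v.}\int_{\mathbb T}(\partial_{s'}\theta)^2\cot\frac{\theta(s')-\theta(s)}{2}\,ds'$, and the Mittag--Leffler identity then matches this with the $\mathbb R$-formulation of the tangential Peskin problem. The bookkeeping in your ``Step~2'' checks out: the three non-singular pieces from $e^{i\phi}\cot(\phi/2)=\cot(\phi/2)-\sin\phi+i(1+\cos\phi)$ cancel the leftover $\int(\partial_{s'}\theta)^2$ term and reassemble into exactly $v(t)$, as you claim.

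The paper arrives at the same scalar equation by a slightly different algebraic organization: it first passes to the normalized variable $\tilde X=X-x(t)$, rewrites the equation as an identity for $\partial_t\tilde X/\tilde X$ (their equation for $\partial_t|\tilde X|/|\tilde X|$ and the subsequent display), and observes that once $|\tilde X|\equiv R_X$ the extra term vanishes and only the cotangent integral survives. Your route substitutes the polar form up front and does the cancellation with trig identities; theirs isolates the ``radial'' obstruction term first. Both are the same computation at heart. One thing the paper does that your proposal does not is prove, via a separate maximum-principle argument on $|\tilde X|$, that \emph{any} sufficiently smooth Peskin solution starting from a circle must remain circular; this rigidity statement is not needed for the construction in the theorem, but it explains why the circular ansatz is the right one rather than merely a lucky guess.
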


\subsection{Organization of the paper}
A major part of this work is devoted to proving various a priori estimates for sufficiently smooth solutions.
In Section \ref{sec: preliminary whole section}, we set up the problem, introduce necessary notations, and derive some basic equations.
Section \ref{sec: angle Phi} and Section \ref{sec: curvature} constitute the most important part of the paper, where we study geometric properties of the curve $X(\BT)$.
In Section \ref{sec: angle Phi}, we show that under the assumption $\sup_{s,s'\in \BT}|\Phi_0(s,s')|<\pi/4$, $|\Phi|$ satisfies the maximum principle and a decay estimate.
With the bound on $|\Phi|$, we can quantify geometric features of the curve $X(\BT)$.
We also prove Theorem \ref{thm: short version of the thm for circular initial data} (i.e., Theorem \ref{thm: circular string}) on $H^1$-initial data with a circular shape.
Section \ref{sec: curvature} is focused on the curvature $\ka(s,t)$, and proves its extremum principle and decay estimate.
We extend the extremum principles on $\Phi$ and $\ka$ to the case of general elasticity in Section \ref{sec: general elasticity} and Remark \ref{rmk: general elasticity curvature}, respectively.
In Section \ref{sec: estimates for |X'|}, we turn to deriving estimates for the stretching configuration $|X'|$.
Finally, we prove Theorem \ref{thm: main thm} in Section \ref{sec: proof of main results}.

\subsection*{Acknowledgement}
Both authors are supported by the National Key R\&D Program of China under the grant 2021YFA1001500.


\section{Preliminaries}
\label{sec: preliminary whole section}

\subsection{Setup and the notations}
\label{sec: preliminary}

Throughout this paper, except for the proof of the main results in Section \ref{sec: proof of main results}, we will always assume that for some $T>0$,
$X(s,t)$ solves \eqref{eqn: contour dynamic equation complex form} in $\BT\times [0,T]$, and satisfies
\begin{enumerate}[label = (A\arabic*)]
\item For each $t\in [0,T]$, $s\mapsto X(s,t)$ is injective, and the image $X(\BT,t)$ is a closed $C^2$-curve in $\mathbb{C}$;
    \label{assumption: geometry}
\item $X'(s,t), X''(s,t), X'''(s,t)\in C^1(\BT\times [0,T])$;
\item $|X'(s,t)|>0$ in $\BT\times [0,T]$.
\label{assumption: non-degeneracy}
\end{enumerate}
Besides, we assume the elastic string is parameterized by $X= X(s,t)$ in the counter-closewise direction (i.e., its winding number with respect to an arbitrary point in the interior of $X(\BT)$ is $1$).
Given these assumptions, it is not difficult to show that, for each $t\in [0,T]$,
\beq
\inf_{s\neq s'}\f{|X(s,t)-X(s',t)|}{|s-s'|} >0.
\label{eqn: assumption well stretched}
\eeq

We first prove a lemma that will be used repeatedly in the calculation below.

\begin{lem}
\label{lem: some singular integrals along the string curve}
Fix $t\in [0,T]$, which will be omitted below.
For any $s\in \BT$,
\[
\pv \int_\BT \f{X'(s')}{X(s')-X(s)}\,ds'= \pi i,
\]
and
\[
\int_\BT \Im\left[\f{X'(s')X'(s)}{(X(s')-X(s))^2}\right] ds' = 0.
\]
\begin{proof}
We calculate the first integral that
\[
\pv \int_\BT \f{X'(s')}{X(s')-X(s)}\,ds'= \lim_{\e\to 0^+} \ln (X(s+2\pi-\e)-X(s)) - \ln (X(s+\e)-X(s)).
\]
Its imaginary part is $\pi i$, as we assumed that $X\in C^1(\BT)$ and the curve $X(\BT)$ is parameterized in the counter-clockwise direction.
It suffices to show the real part is $0$.
In fact,
\[
\lim_{\e\to 0^+} \ln \f{|X(s-\e)-X(s)|}{|X(s+\e)-X(s)|} =\lim_{\e\to 0^+} \ln \f{|X(s-\e)-X(s)|/\e}{|X(s+\e)-X(s)|/\e}=\ln \f{|X'(s)|}{|X'(s)|}=0.
\]
This proves the first claim.

For the second integral, we first observe that
\beq
\begin{split}
\Im\left[\f{X'(s')X'(s)}{(X(s')-X(s))^2}\right]
= &\; \Im\left[\f{X'(s')-\f{X(s')-X(s)}{s'-s}}{X(s')-X(s)}
\cdot \f{X'(s)-\f{X(s')-X(s)}{s'-s}}{X(s')-X(s)}\right] \\
&\; -
2\Im\left[ \f{\f{X(s')-X(s)}{s'-s} - X'(s)-\f12(s'-s) X''(s)}{(s'-s)(X(s')-X(s))}\right] \\
&\; +
\Im\left[\f{X'(s') - X'(s)- (s'-s)X''(s)} {(s'-s)(X(s')-X(s))}\right].
\end{split}
\label{eqn: boundedness of Im J}
\eeq
One can show that this is bounded
by using \eqref{eqn: assumption well stretched}, the assumption $X\in C^3(\BT)$, and the Taylor expansion.
Then we calculate the second integral that
\[
\begin{split}
&\; \int_\BT \Im\left[\f{X'(s')X'(s)}{(X(s')-X(s))^2}\right] ds'\\
= &\; \lim_{\e\to 0^+} \Im\left[X'(s)\int_{\BT \setminus [-\e,\e]} \pa_{s'}\left(-\f{1}{X(s+s')-X(s)}\right) ds'\right]\\
= &\; \lim_{\e\to 0^+} \Im\left[\f{X'(s)}{X(s+\e)-X(s)}-\f{X'(s)}{X(s-\e)-X(s)}\right]\\
= &\; -\lim_{\e\to 0^+}\f{ \Im [\overline{X'(s)}(X(s+\e)-X(s))]}{|X(s+\e)-X(s)|^2}
+\lim_{\e\to 0^+}\f{\Im[\overline{X'(s)} (X(s-\e)-X(s))]}{|X(s-\e)-X(s)|^2}.
\end{split}
\]
We find
\[
\begin{split}
\lim_{\e\to 0^+}\f{ \Im[\overline{X'(s)} (X(s+\e)-X(s))]}{|X(s+\e)-X(s)|^2}
= &\;
\lim_{\e\to 0^+}\f{ \e^{-2}\Im [\overline{X'(s)}(X(s+\e)-X(s)-\e X'(s))]} {\e^{-2}|X(s+\e)-X(s)|^2}\\
= &\; \f{\Im (\overline{X'(s)} X''(s))}{2|X'(s)|^2}, 
\end{split}
\]
and similarly,
\[
\lim_{\e\to 0^+}\f{ \Im [\overline{X'(s)}(X(s-\e)-X(s))]}{|X(s-\e)-X(s)|^2}
= \f{\Im (\overline{X'(s)} X''(s))}{2|X'(s)|^2}. 
\]
Then the second claim follows.
\end{proof}
\end{lem}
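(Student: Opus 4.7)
The plan for both identities is the same in spirit: identify an explicit antiderivative in $s'$, excise a symmetric $\e$-neighborhood of the singular point $s'=s$, and carefully evaluate the endpoint contributions as $\e \to 0^+$. The first identity is essentially the statement that the Cauchy-type integral along a smooth closed curve, evaluated at a boundary point, picks up half of the full winding; the second is a similar jump-type relation for a square-power kernel, in which (after checking that a certain cancellation of a real leading singularity takes place) the answer is $0$.

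For the first claim I would write $\frac{X'(s')}{X(s')-X(s)} = \partial_{s'}\log(X(s')-X(s))$ along the arc $s'\in[s+\e,\,s+2\pi-\e]$, pick a continuous branch of $\log$ along this arc, and take $\e\to 0^+$. The real part of the resulting endpoint difference is $\log\frac{|X(s-\e)-X(s)|}{|X(s+\e)-X(s)|}$; since $X(s\pm\e)-X(s)=\pm\e X'(s)+o(\e)$ and $|X'(s)|>0$, this logarithm converges to $0$. The imaginary part records the continuous change of $\arg(X(s')-X(s))$ as $s'$ moves around the deleted loop; the starting argument tends to $\arg X'(s)$ and the ending one to $\arg(-X'(s))$, and because $X(\BT)$ is a simple closed $C^2$ curve traversed counterclockwise the continuous path sweeps out a net $+\pi$, giving $\pi i$.

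For the second claim I would observe the key algebraic identity
\[
\frac{X'(s')X'(s)}{(X(s')-X(s))^2} \;=\; -\,X'(s)\,\partial_{s'}\!\left[\frac{1}{X(s')-X(s)}\right],
\]
so that the $\e$-truncated integral equals
\[
-\,X'(s)\!\left[\frac{1}{X(s+2\pi-\e)-X(s)}-\frac{1}{X(s+\e)-X(s)}\right].
\]
Before passing to the limit, I need to know that the \emph{imaginary part} of the original integrand is integrable on $\BT$ (so the $\e$-truncated integrals actually converge to the honest integral, not merely a principal value). I would establish this by Taylor expanding $X$ and $X'$ at $s$ to see that the pointwise $(s'-s)^{-2}$ leading behavior of the integrand is real, leaving an $L^\infty$ remainder; the algebraic rewriting in (2.x) of the excerpt is precisely the clean way to exhibit this cancellation. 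Once integrability is secured, I would expand
\[
\frac{1}{X(s\pm\e)-X(s)}=\pm\frac{1}{\e\,X'(s)}-\frac{X''(s)}{2\,X'(s)^2}+O(\e),
\]
multiply by $X'(s)$, and observe that the endpoint difference equals $-\tfrac{2}{\e}+O(\e)$, which is real. Taking $\Im$ of both sides and sending $\e\to 0^+$ yields $0$.

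The main obstacle in this proof is not the computation of endpoint values—those are essentially algebraic—but the justification that the $\e\to 0^+$ limit of the truncated integral equals the actual (convergent) integral of $\Im[\,\cdot\,]$, i.e., the removable-singularity argument for the imaginary part. This is the subtle step, since the full complex integrand has a non-integrable $(s'-s)^{-2}$ singularity and only careful bookkeeping of the real/imaginary parts, together with the smoothness of $X$, prevents the imaginary-part integral from being defined only in a PV sense. Beyond this point both identities reduce to clean Taylor-expansion calculations that I would not expect to be troublesome.
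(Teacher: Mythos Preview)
Your proposal is correct and follows essentially the same route as the paper: both identities are obtained by recognizing an explicit antiderivative in $s'$, excising a symmetric $\e$-neighborhood of $s'=s$, and evaluating the endpoint contributions via Taylor expansion, with the integrability of $\Im J(s',s)$ being established by the same cancellation-of-leading-real-singularity argument you outline. One small slip: in your second computation the endpoint difference comes out to $+\tfrac{2}{\e}+O(\e)$ rather than $-\tfrac{2}{\e}+O(\e)$, but since the point is only that the leading term is real this does not affect the conclusion.
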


Using Lemma \ref{lem: some singular integrals along the string curve}, we differentiate \eqref{eqn: contour dynamic equation complex form} to obtain
\beq
\begin{split}
&\;\pa_t X'(s)\\
= &\; \f{1}{4\pi}\pa_s
\left( \int_\BT \f{X'(s')(X'(s')- X'(s))}{X(s')-X(s)}\,ds'+\pi i X'(s)\right)\\
&\; -\f{i}{4\pi} \int_\BT \pa_{s}
\left[\Im \left(\f{X'(s')^2}{(X(s')-X(s))^2} \right) \big(X(s')-X(s)\big)\right]ds'\\
= &\; X'(s) \cdot \f{1}{4\pi}
\pv \int_\BT \f{X'(s')(X'(s')- X'(s))}{(X(s')-X(s))^2} \,ds' \\
&\; -\f{i}{4\pi} \int_\BT \left\{\Im \left[\f{2X'(s')^2 X'(s) }{(X(s')-X(s))^3} \right] \big(X(s')-X(s)\big) -
\Im \left[\f{X'(s')^2}{(X(s')-X(s))^2} \right]  X'(s) \right\}ds'.
\end{split}
\label{eqn: eqn for X' complex form}
\eeq
No principal value integral is needed in the last line because the integrand is bounded given the assumptions on $X$.
This can be justified as in \eqref{eqn: boundedness of Im J}.

We introduce a few more notations that will be used throughout the paper.
For $s\in \BT$, let
\[
\al(s) := \arg X'(s)\in \mathbb{T}.
\]
Here $\al(s)$ and the angles defined below should always be understood in the modulo $2\pi$.
For distinct $s_1,s_2\in \BT$, let
\[
I(s_1,s_2):=\frac{X'(s_1)}{X(s_1)-X(s_2)},
\]
and
\[
J(s_1,s_2):= \frac{X'(s_1)X'(s_2)}{(X(s_1)-X(s_2))^2} = \frac{\partial I}{\partial s_2}(s_1,s_2).
\]
As a result, for distinct $s_1,s_2\in \BT$ (cf.\;\eqref{eqn: def of Phi}),
\[
\Phi(s_1,s_2)=\arg J(s_1,s_2).
\]
Since we additionally defined $\Phi(s,s) = 0$ for all $s\in \BT$, thanks to the regularity of $X(s,t)$, $\Phi(s_1,s_2,t)$ is (at least) a $C^1$-function in $\BT\times \BT\times [0,T]$.
Obviously,
\[
J(s_1,s_2) = J(s_2,s_1), \quad \Phi(s_1,s_2) = \Phi(s_2,s_1).
\]
Besides, define for distinct $s_0,s_1,s_2\in \BT$,
\[
\tilde{J}(s_0,s_1,s_2)
=\frac{X'(s_0)(X(s_1)-X(s_2))}{(X(s_0)-X(s_1))(X(s_0)-X(s_2))}=I(s_0,s_1)-I(s_0,s_2).
\]

The total length of the elastic string $X(s,t)$ is given by
\beq
\CL(t):= \int_\BT |X'(s,t)|\,ds.
\label{eqn: length}
\eeq
By the isoperimetric inequality, $\CL(t) \geq 2\pi R_X$, where $R_X$ was defined in \eqref{eqn: effective radius}.
The elastic energy of the string, which is also the total energy of the system \eqref{eqn: Stokes equation}-\eqref{eqn: motion of the membrane and initial configuration} in the case $F_X(s,t) = X''(s,t)$, is given by
\beq
\CE(t): = \f12 \int_\BT |X'(s,t)|^2\,ds.
\label{eqn: energy}
\eeq
Lastly, let
\beq
d_*: = \sup_{s,s'\in \BT}| X(s)-X(s')|.
\label{eqn: diameter of X}
\eeq
It is clear that $2d_*\leq \CL(t)$.

\subsection{The equation for $|X'|$}
Let us derive the equation for $|X'|$, which characterizes local stretching of the string.

Since
\[
|X'|\cdot \pa_t |X'| = \f12 \pa_t |X'|^2 = \Re\left[\pa_t X'(s) \overline{X'(s)}\right],
\]
we derive from \eqref{eqn: eqn for X' complex form} that
\beq
\begin{split}
&\; \pa_t |X'(s)|\\
= &\; |X'(s)| \cdot \f{1}{4\pi}
\pv \int_{\T} \Re\left[\f{X'(s')(X'(s')- X'(s))}{(X(s')-X(s))^2} \right] ds' \\
&\; +|X'(s)|\cdot \f{1}{4\pi} \int_{\T} \Im \left[\f{2X'(s')^2 X'(s) }{(X(s')-X(s))^3} \right] \Im\left[\frac{X(s')-X(s)}{X'(s)}\right] ds'.
\end{split}
\label{eqn: eqn for |X'| complex form}
\eeq
With the notations introduced above,
\beqo
\begin{split}
&\; \pa_t |X'(s)|\\
= &\; |X'(s)| \cdot \f{1}{4\pi}
\pv \int_{\T} \Re\left[\f{X'(s')^2X'(s)}{(X(s')-X(s))^3}\cdot \f{X(s')-X(s)}{X'(s)} - J(s',s) \right] ds' \\
&\; +|X'(s)|\cdot \f{1}{4\pi}\pv \int_{\T} \Im \left[\f{2X'(s')^2 X'(s) }{(X(s')-X(s))^3} \right] \Im\left[\frac{X(s')-X(s)}{X'(s)}\right] ds'\\
= &\; |X'(s)| \cdot \f{1}{4\pi}
\pv \int_{\T} \left\{\Re\left[\f{X'(s')^2X'(s)}{(X(s')-X(s))^3}\right]
\Re\left[ \f{X(s')-X(s)}{X'(s)} \right] - \Re \, J(s',s)\right\}  ds' \\
&\; +|X'(s)|\cdot \f{1}{4\pi}\pv \int_{\T} \Im \left[\f{X'(s')^2 X'(s) }{(X(s')-X(s))^3} \right] \Im\left[\frac{X(s')-X(s)}{X'(s)}\right] ds'\\
= &\; |X'(s)| \cdot \f{1}{4\pi}
\pv \int_{\T} \left\{\Re\left[\f{X'(s')^2X'(s)}{(X(s')-X(s))^3}\cdot
\f{\overline{X(s')-X(s)}}{\overline{X'(s)}} \right] - \Re \, J(s',s)\right\} ds' \\
=&\;|X'(s)| \cdot \f{1}{4\pi}
\pv \int_{\T} \Re\left[J(s',s)^2\cdot \frac{|X(s')-X(s)|^2}{|X'(s)|^2}-J(s',s)\right] ds'. 
\end{split}
\eeqo
Therefore,
\beq
\begin{split}
&\;\pa_t |X'(s)|\\
= &\;|X'(s)| \cdot \f{1}{4\pi}
\pv \int_{\T} \frac{|X'(s')|}{|X(s')-X(s)|^2}\left[|X'(s')|\cos 2\Phi(s',s)-|X'(s)|\cos \Phi(s',s)\right] ds'.
\end{split}
\label{eqn: eqn for |X'|}
\eeq

\subsection{The equation for $\al(s)$}
Although $\al$ is a $\BT$-valued function, its time derivative is well-defined as a real-valued function
\[
\pa_t \al = \Im\frac{\partial_tX'(s)}{X'(s)}.
\]
Using Lemma \ref{lem: some singular integrals along the string curve}, we derive from \eqref{eqn: eqn for X' complex form} that
\beq
\begin{split}
&\;\Im\frac{\partial_tX'(s)}{X'(s)}\\
= &\; \f{1}{4\pi}
\pv \int_{\T} \Im\left[\f{X'(s')(X'(s')- X'(s))}{(X(s')-X(s))^2} \right] ds' \\
&\; -\f{1}{4\pi}\int_{\T} \left\{\Im \left[\f{2X'(s')^2 X'(s) }{(X(s')-X(s))^3} \right] \Re\left[\frac{X(s')-X(s)}{X'(s)}\right]-
\Im \left[\f{X'(s')^2 }{(X(s')-X(s))^2} \right]\right\} ds'\\
=&\; \f{1}{4\pi}
\pv \int_{\T} \left\{\Im\left[\f{2X'(s')^2}{(X(s')-X(s))^2} \right]-\Im \left[\f{2X'(s')^2 X'(s) }{(X(s')-X(s))^3} \right]
\Re\left[\frac{X(s')-X(s)}{X'(s)}\right]\right\} ds' \\
=&\;\frac{1}{2\pi}\pv\int_{\T}\Re\left[\frac{X'(s')^2X'(s)}{(X(s')-X(s))^3}\right]
\Im\left[\frac{X(s')-X(s)}{X'(s)}\right]ds'.
\end{split}
\label{eqn: equation for alpha}
\eeq
In order to take its derivative later, let us give an alternative form of this equation.
Again using Lemma \ref{lem: some singular integrals along the string curve}, we derive from \eqref{eqn: equation for alpha} that
\beq
\begin{split}
&\;\Im\frac{\partial_t X'(s)}{X'(s)}\\
=&\;    \frac{1}{2\pi}\int_{\T} \left\{\Re\left[\frac{X'(s')^2X'(s)}{(X(s')-X(s))^3}\right]
\Im\left[\frac{X(s')-X(s)}{X'(s)}\right] -\frac{1}{2}\Im\left[\f{X'(s')X''(s)}{X'(s)(X(s')-X(s))}\right]\right\} ds'\\
&\;+\Im\left[\f{i}{4}\f{X''(s)}{X'(s)} \right].
\end{split}
\label{eqn: equation for alpha differentiable}
\eeq

\section{The Angle $\Phi(s_1,s_2)$}
\label{sec: angle Phi}
In this section, we first prove in Proposition \ref{prop: bound for the max |Phi|} that $\sup_{s_1,s_2\in \BT}|\Phi(s_1,s_2,t)|$ satisfies a maximum principle and a decay estimate if it is initially less than $\pi/4$.
This holds even when the elasticity law takes a more general form.
Then we study the geometric properties of the curve $X(\BT)$ when there is a bound for $|\Phi|$ (see Proposition \ref{prop: geometric property of the curve}).
Lastly, we state and prove Theorem \ref{thm: circular string} on the global solution starting from $H^1$-initial data with a circular shape.

\subsection{Maximum principle and decay of $\Phi$}
We start from deriving the equation for $\Phi(s_1,s_2)$.

\begin{lem}
\label{lem: equation for Phi}
For distinct $s',s_1,s_2\in \BT$, denote
\[
\th = \th(s',s_1,s_2): = \Phi(s',s_1)+\Phi(s',s_2) - \Phi(s_1,s_2).
\]
Then given distinct $s_1,s_2\in \BT$, in the sense of modulo $2\pi$, $\Phi(s_1,s_2)$ satisfies
\beq
\begin{split}
&\;\partial_t\Phi(s_1,s_2)\\
= &\; \frac{1}{4\pi}\pv\int_{\T}
\left\{\frac{|X'(s')|^2(\sin \th - \sin 2\Phi(s',s_1))}{|X(s')-X(s_1)|^2}
+ \frac{|X'(s')|^2(\sin \th - \sin 2\Phi(s',s_2))}{|X(s')-X(s_2)|^2}\right\} ds'.
\end{split}
\label{eqn: equation for Phi(s_1,s_2)}
\eeq
\begin{proof}
By \eqref{eqn: def of Phi},
\beq
\partial_t\Phi(s_1,s_2)
=\mathrm{Im}\frac{\partial_tX'(s_1)}{X'(s_1)}+\mathrm{Im}\frac{\partial_tX'(s_2)}{X'(s_2)}-
2\mathrm{Im}\frac{\partial_t(X(s_1)-X(s_2))}{X(s_1)-X(s_2)}.
\label{eqn: expression of time derivative of Phi}
\eeq
From the first line of \eqref{eqn: contour dynamic equation complex form}, we derive that
\begin{align*}
\mathrm{Im}\frac{\partial_t(X(s_1)-X(s_2))}{X(s_1)-X(s_2)}
=&\;
\frac{1}{4\pi}\text{p.v.}\int_{\T}\mathrm{Re}\left[\frac{X'(s')^2}{(X(s')-X(s_1))^2}\right]\mathrm{Im}\frac{X(s')-X(s_1)}{X(s_1)-X(s_2)}\,ds'\\
&\;- \frac{1}{4\pi}\text{p.v.}\int_{\T}\mathrm{Re}\left[\frac{X'(s')^2}{(X(s')-X(s_2))^2}\right]\mathrm{Im}\frac{X(s')-X(s_2)}{X(s_1)-X(s_2)}\,ds'.
\end{align*}
Note that 
\[
\mathrm{Im}\frac{X(s')-X(s_1)}{X(s_1)-X(s_2)}=\mathrm{Im}\frac{X(s')-X(s_2)}{X(s_1)-X(s_2)},
\]
so
\beqo
\begin{split}
&\; \mathrm{Im}\frac{\partial_t(X(s_1)-X(s_2))}{X(s_1)-X(s_2)}\\
=&\;
\frac{1}{4\pi}\text{p.v.}\int_{\T}\mathrm{Re}\left[\frac{X'(s')^2}{(X(s')-X(s_1))^2}-\frac{X'(s')^2}{(X(s')-X(s_2))^2}\right]
\mathrm{Im}\frac{X(s')-X(s_1)}{X(s_1)-X(s_2)}\,ds'\\
=&\;
\frac{1}{4\pi}\text{p.v.}\int_{\T}\mathrm{Re}\left[\frac{X'(s')^2(X(s_1)-X(s_2))}{(X(s')-X(s_1))^2(X(s')-X(s_2))}\right]
\mathrm{Im}\frac{X(s')-X(s_2)}{X(s_1)-X(s_2)}\,ds'\\
&\; +
\frac{1}{4\pi}\text{p.v.}\int_{\T}\mathrm{Re}\left[\frac{X'(s')^2(X(s_1)-X(s_2))}{(X(s')-X(s_2))^2(X(s')-X(s_1))}\right]
\mathrm{Im}\frac{X(s')-X(s_1)}{X(s_1)-X(s_2)}\,ds'.
\end{split}
\eeqo
Thanks to \eqref{eqn: equation for alpha}, we also have that
\begin{align*}
&\mathrm{Im}\frac{\partial_tX'(s_1)}{X'(s_1)}
=
\frac{1}{2\pi}\text{p.v.}\int_{\T}\mathrm{Re}\left[\frac{X'(s')^2X'(s_1)}{(X(s')-X(s_1))^3}\right]
\mathrm{Im}\frac{X(s')-X(s_1)}{X'(s_1)}\,ds',\\
&\mathrm{Im}\frac{\partial_tX'(s_2)}{X'(s_2)}
=
\frac{1}{2\pi}\text{p.v.}\int_{\T}\mathrm{Re}\left[\frac{X'(s')^2X'(s_2)}{(X(s')-X(s_2))^3}\right]
\mathrm{Im}\frac{X(s')-X(s_2)}{X'(s_2)}\,ds'.
\end{align*}
Hence,
\[
\partial_t\Phi(s_1,s_2)=\frac{1}{2\pi}\text{p.v.}\int_{\T} (K_1+K_2) \, ds'
\]
where
\begin{align*}
K_1:=&\;
     \mathrm{Re}\left[\frac{X'(s')^2X'(s_1)}{(X(s')-X(s_1))^3}\right]
     \mathrm{Im}\frac{X(s')-X(s_1)}{X'(s_1)}\\&-\mathrm{Re}\left[\frac{X'(s')^2(X(s_1)-X(s_2))}{(X(s')-X(s_1))^2(X(s')-X(s_2))}\right]
     \mathrm{Im}\frac{X(s')-X(s_2)}{X(s_1)-X(s_2)},\\
K_2:=&\;
     \mathrm{Re}\left[\frac{X'(s')^2X'(s_2)}{(X(s')-X(s_2))^3}\right]
     \mathrm{Im}\frac{X(s')-X(s_2)}{X'(s_2)}\\&-\mathrm{Re}\left[\frac{X'(s')^2(X(s_1)-X(s_2))}{(X(s')-X(s_2))^2(X(s')-X(s_1))}\right]
     \mathrm{Im}\frac{X(s')-X(s_1)}{X(s_1)-X(s_2)}.
\end{align*}
For arbitrary complex numbers $A,B,C$ with $B,C\neq 0$,
\beqo
\begin{split}
&\; \mathrm{Re}\left[A/B\right]\mathrm{Im}\,B- \mathrm{Re}\left[A/C\right]\mathrm{Im}\,C\\
=&\; \f{1}{4i}\left[(A/B+\overline{A}/\overline{B})(B-\overline{B})-(A/C+\overline{A}/\overline{C})(C-\overline{C})\right]
\\
=&\; \f{1}{4i}\left[\overline{A}(B/\overline{B}-C/\overline{C})-{A}(\overline{B}/B-\overline{C}/C)\right]\\
=&\; \f12 \mathrm{Im}\left[{A}\overline{C}/C- A\overline{B}/B\right]\\
=&\; \f12 \mathrm{Im}\left[A/C^2\right]|C|^2 - \f12 \Im\left[A/B^2\right]|B|^2.
\end{split}
\eeqo
Hence, we find that
\begin{align*}
K_1=
&\;
\f12 \mathrm{Im}\left[\frac{X'(s')^2(X(s_1)-X(s_2))^2}{(X(s')-X(s_1))^2(X(s')-X(s_2))^2}\right]
\frac{|X(s')-X(s_2)|^2}{|X(s_1)-X(s_2)|^2}\\
&\;-\f12 \mathrm{Im}\left[\frac{X'(s')^2X'(s_1)^2}{(X(s')-X(s_1))^4}\right]\frac{|X(s')-X(s_1)|^2}{|X'(s_1)|^2},
\\
= &\;\f12\left(\sin \big[\Phi(s',s_1)+\Phi(s',s_2) - \Phi(s_1,s_2)\big]-\sin 2\Phi(s',s_1)\right) \frac{|X'(s')|^2}{|X(s')-X(s_1)|^2},
\end{align*}
and similarly,
\[
K_2=
\f12 \left(\sin \big[\Phi(s',s_1)+\Phi(s',s_2) - \Phi(s_1,s_2)\big]-\sin 2\Phi(s',s_2)\right) \frac{|X'(s')|^2}{|X(s')-X(s_2)|^2}.
\]
Combining all these calculations, we conclude with \eqref{eqn: equation for Phi(s_1,s_2)}.
\end{proof}
\end{lem}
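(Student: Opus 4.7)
Since $\Phi(s_1,s_2)=\arg J(s_1,s_2)$ with $J(s_1,s_2)=X'(s_1)X'(s_2)/(X(s_1)-X(s_2))^2$, the plan is to compute the logarithmic derivative
\[
\partial_t\Phi(s_1,s_2)=\Im\frac{\partial_tJ(s_1,s_2)}{J(s_1,s_2)}
=\Im\frac{\partial_tX'(s_1)}{X'(s_1)}+\Im\frac{\partial_tX'(s_2)}{X'(s_2)}-2\,\Im\frac{\partial_t(X(s_1)-X(s_2))}{X(s_1)-X(s_2)},
\]
and then simplify each of the three pieces into singular integrals whose integrands combine neatly. For the first two pieces I would just invoke the formula \eqref{eqn: equation for alpha} for $\Im[\partial_t X'(s)/X'(s)]$ at $s=s_1$ and $s=s_2$. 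For the third piece I would subtract the contour dynamic equation \eqref{eqn: contour dynamic equation complex form} evaluated at $s_1$ and at $s_2$ and divide by $X(s_1)-X(s_2)$.

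The essential simplification in the third piece rests on the observation
\[
\Im\frac{X(s')-X(s_1)}{X(s_1)-X(s_2)}=\Im\frac{X(s')-X(s_2)}{X(s_1)-X(s_2)},
\]
since the two ratios differ by the real constant $-1$. This allows me to factor one common imaginary part and combine the two kernels $\Re[X'(s')^2/(X(s')-X(s_1))^2]$ and $\Re[X'(s')^2/(X(s')-X(s_2))^2]$ into a single difference, which algebraically splits into two summands each of the form $\Re[X'(s')^2(X(s_1)-X(s_2))/((X(s')-X(s_j))^2(X(s')-X(s_{3-j})))]$. After this rearrangement, the whole expression $\partial_t\Phi(s_1,s_2)$ groups naturally into the two integrals $K_1$ (collecting the $j=1$ terms) and $K_2$ (collecting the $j=2$ terms) in the lemma statement.

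The arithmetic heart of the argument is the pointwise identity
\[
\Re[A/B]\,\Im B-\Re[A/C]\,\Im C=\tfrac12\Im[A/C^2]\,|C|^2-\tfrac12\Im[A/B^2]\,|B|^2,
\]
valid for any complex $A$ and nonzero $B,C$. I would apply this with $A=X'(s')^2X'(s_1)$, $B=X'(s_1)$, $C=X(s_1)-X(s_2)$ (after multiplying numerator and denominator by $X'(s_1)$ inside the second summand of $K_1$, so that $A/C$ reads as $\tilde J$-like), so that $K_1$ becomes a combination of two Im's of products of four Stokeslet factors. Then, recognizing each such Im as the imaginary part of $J(s',s_1)J(s',s_2)/J(s_1,s_2)$ and of $J(s',s_1)^2$ respectively, the arguments become precisely $\Phi(s',s_1)+\Phi(s',s_2)-\Phi(s_1,s_2)=\theta$ and $2\Phi(s',s_1)$, while the moduli collapse to $|X'(s')|^2/|X(s')-X(s_1)|^2$ after cancellation. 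An identical treatment of $K_2$ produces the analogous contribution with $s_1$ and $s_2$ swapped. Summing $K_1+K_2$ and restoring the $1/(2\pi)$ prefactor yields \eqref{eqn: equation for Phi(s_1,s_2)}.

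The bulk of the work is bookkeeping rather than analysis; the main obstacle I foresee is making sure the principal value integrals are legitimate and that the algebraic identity is applied to the correct pairing $(B,C)$ so that the moduli end up as $|X(s')-X(s_j)|^2$ (not $|X(s_1)-X(s_2)|^2$), which is what makes the final kernels positive and amenable to the maximum principle argument in the next subsection. Local integrability near $s'=s_j$ is already implicit in the $C^3$-regularity assumed in \ref{assumption: geometry}--\ref{assumption: non-degeneracy}, exactly as justified in \eqref{eqn: boundedness of Im J}, so no new estimate is required beyond those already at hand.
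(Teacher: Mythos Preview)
Your outline is essentially the paper's proof: the logarithmic-derivative decomposition, the use of \eqref{eqn: equation for alpha} for the two $\Im[\partial_t X'/X']$ terms, the subtraction of \eqref{eqn: contour dynamic equation complex form} and the observation that the two imaginary parts coincide, the regrouping into $K_1,K_2$, and the algebraic identity $\Re[A/B]\Im B-\Re[A/C]\Im C=\tfrac12\Im[A/C^2]|C|^2-\tfrac12\Im[A/B^2]|B|^2$ are exactly the steps the paper carries out.

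One correction: your stated choice $A=X'(s')^2X'(s_1)$, $B=X'(s_1)$, $C=X(s_1)-X(s_2)$ does not reproduce $K_1$ (for instance $\Re[A/B]\,\Im B=\Re[X'(s')^2]\,\Im X'(s_1)$, which is not the first summand). The correct identification, read off directly from the form of $K_1$, is
\[
A=\frac{X'(s')^2}{(X(s')-X(s_1))^2},\qquad
B=\frac{X(s')-X(s_1)}{X'(s_1)},\qquad
C=\frac{X(s')-X(s_2)}{X(s_1)-X(s_2)},
\]
so that $A/B^2=J(s',s_1)^2$ and $A/C^2=\dfrac{X'(s')^2(X(s_1)-X(s_2))^2}{(X(s')-X(s_1))^2(X(s')-X(s_2))^2}$, whose arguments are $2\Phi(s',s_1)$ and $\theta$ respectively, while $|B|^2$ and $|C|^2$ supply the correct moduli to leave the common factor $|X'(s')|^2/|X(s')-X(s_1)|^2$. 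With this fix your sketch is complete and matches the paper line by line.
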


\begin{lem}
\label{lem: range of argument of theta}
Suppose that $\Phi(s_1,s_2)\in[\Phi_-, \Phi_+]\subset(-\f{\pi}{3},\f{\pi}{3})$ for all $s_1,s_2\in \T$.
Then for all distinct $s_0,s_1,s_2\in \T$,
\[
\th(s_0,s_1,s_2) = \Phi(s_0,s_1)+\Phi(s_0,s_2)-\Phi(s_1,s_2)\in[2\Phi_-, 2\Phi_+].
\]
\begin{proof}
Without loss of generality, we can assume $s_0<s_2<s_1<s_0+2\pi$.
Since $\frac{\partial I}{\partial s}(s_0,s)=J(s_0,s)$,
\[
\tilde{J}(s_0,s_1,s_2)
= I(s_0,s_1) - I(s_0,s_2)
= \int_{s_2}^{s_1}J(s_0,s)\,ds.
\]
Hence, $\arg\tilde{J}(s_0,s_1,s_2)\in[\Phi_-, \Phi_+]$.
We also note that
\[
\arg\tilde{J}(s_0,s_1,s_2)^2=\arg \frac{X'(s_0)^2(X(s_1)-X(s_2))^2}{(X(s_0)-X(s_1))^2(X(s_0)-X(s_2))^2}
= \Phi(s_0,s_1)+\Phi(s_0,s_2)-\Phi(s_1,s_2),
\]
which proves the claim.
\end{proof}
\end{lem}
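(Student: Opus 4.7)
The plan is to identify $\theta(s_0,s_1,s_2)$, up to multiples of $2\pi$, with twice the argument of the auxiliary quantity $\tilde J(s_0,s_1,s_2)$, and then bound $\arg\tilde J$ via an integral representation. Concretely, the identity $\partial_{s} I(s_0,s) = J(s_0,s)$ is immediate from the definitions, so after picking representatives so that $s_0<s_2<s_1<s_0+2\pi$ one obtains
\[
\tilde J(s_0,s_1,s_2) \;=\; I(s_0,s_1)-I(s_0,s_2) \;=\; \int_{s_2}^{s_1} J(s_0,s)\,ds.
\]

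Since $\arg J(s_0,s) = \Phi(s_0,s) \in [\Phi_-,\Phi_+]$ for every $s \in (s_2,s_1)$, and since this closed interval has length $\Phi_+ - \Phi_- < 2\pi/3 < \pi$, the integrand is a continuous nonvanishing function whose values all lie in the closed convex cone $\{\,re^{i\phi} : r \geq 0,\; \phi \in [\Phi_-,\Phi_+]\,\}$. Integrating against the positive Lebesgue measure $ds$ over a nondegenerate interval keeps the result inside this cone and keeps it nonzero (after rotating by $e^{-i(\Phi_-+\Phi_+)/2}$ the real part is bounded below by a positive multiple of $\int_{s_2}^{s_1}|J(s_0,s)|\,ds$), so $\arg\tilde J(s_0,s_1,s_2)\in[\Phi_-,\Phi_+]$ and consequently $2\arg\tilde J \in [2\Phi_-,2\Phi_+]$.

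Finally, I expand
\[
\tilde J(s_0,s_1,s_2)^2 \;=\; \frac{X'(s_0)^2(X(s_1)-X(s_2))^2}{(X(s_0)-X(s_1))^2(X(s_0)-X(s_2))^2}
\]
and recognise the right-hand side as the natural product of the three quantities defining $\Phi(s_0,s_1)$, $\Phi(s_0,s_2)$, and $\Phi(s_1,s_2)$; taking arguments yields $\arg\tilde J^2 \equiv \theta \pmod{2\pi}$. The only step that requires any care is lifting this modular identity to an equality of real numbers: the sum $\theta$ of chosen representatives in $(-\pi/3,\pi/3)$ lies a priori in $(-\pi,\pi)$, whereas $2\arg\tilde J$ lies in $[2\Phi_-,2\Phi_+]\subset(-2\pi/3,2\pi/3)$; their difference is an integer multiple of $2\pi$ inside $(-5\pi/3,5\pi/3)$ and must therefore vanish, giving $\theta = 2\arg\tilde J \in [2\Phi_-,2\Phi_+]$. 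I expect this branch-tracking to be the only subtle point, whereas the analytic content — that a positive-measure integral of vectors in a cone of opening $<\pi$ stays in that cone — is essentially automatic once $\tilde J$ is written as an integral of $J$.
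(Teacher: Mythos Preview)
Your proof is correct and follows essentially the same approach as the paper: write $\tilde J$ as the integral $\int_{s_2}^{s_1} J(s_0,s)\,ds$, deduce $\arg\tilde J\in[\Phi_-,\Phi_+]$, and identify $\arg\tilde J^2$ with $\theta$. The paper leaves the convex-cone averaging and the $2\pi$-branch resolution implicit, whereas you spell them out explicitly; both are helpful clarifications but do not change the argument.
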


\begin{lem}
\label{lem: zero Phi_+ or Phi_-}
Suppose that $\Phi(s_1,s_2)\in[\Phi_-, \Phi_+]\subset(-\f{\pi}{3},\f{\pi}{3})$ for all $s_1,s_2\in \T$.
If $\Phi_+\leq 0$ or $\Phi_-\geq 0$, then $\Phi(s_1,s_2)\equiv 0$ for all $(s_1,s_2)\in \BT$, and thus $X(\BT)$ is a circle.
\begin{proof}
Suppose $\Phi_+\leq 0$.
Then for all distinct $s',s\in \BT$,
\[
\Im\left[\f{X'(s')X'(s)}{(X(s')-X(s))^2}\right] = \f{|X'(s')||X'(s)|}{|X(s')-X(s)|^2}\sin \Phi(s',s) \leq 0,
\]
and the equality holds if any only if $\Phi(s',s) = 0$.
Hence, by Lemma \ref{lem: some singular integrals along the string curve}, $\Phi \equiv 0$.

Now fix $s\in \BT$.
Assume $X(s) = 0$ and $X'(s)$ is a positive real number without loss of generality;
one can always achieve this by suitable rotation and translation.
Then $\Phi\equiv 0$ implies that, for all $s'\neq s$,
\[
\Im\left[\f{X'(s')X'(s)}{(X(s')-X(s))^2}\right] \equiv 0,
\]
and thus $\f{X'(s')}{X(s')^2}\in \BR$.
Hence, there exists some constant $C_*$, such that
\[
\Im\left[\f{1}{X(s')}\right] = C_*\quad \forall\, s'\in \BT\setminus\{s\}.
\]
If $C_*=0$, $X(\BT)\subset \BR$.
This is not possible because $X(s)$ is injective.
If $C_* \neq 0$, $X(\BT)$ is contained in a circle that goes through the origin.
Since $X(s)$ is injective, $X(\BT)$ must be the whole circle.

The case $\Phi_-\geq 0$ can be handled similarly.
\end{proof}
\end{lem}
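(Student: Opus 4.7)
The plan is to split the lemma into two separate claims, namely $\Phi\equiv 0$ and then $X(\BT)$ is a circle, and to prove each by a short argument that exploits the integral identity from Lemma~\ref{lem: some singular integrals along the string curve} and the identity $X'(s)/X(s)^2=-\frac{d}{ds}(1/X(s))$, respectively.

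For the first claim, I would treat the case $\Phi_+\le 0$ (the case $\Phi_-\ge 0$ is symmetric). Using the polar representation
\[
\Im\!\left[\f{X'(s')X'(s)}{(X(s')-X(s))^2}\right]=\f{|X'(s')|\,|X'(s)|}{|X(s')-X(s)|^2}\sin\Phi(s',s),
\]
the hypothesis $\Phi\le 0$ forces this integrand to be nonpositive, with equality iff $\Phi(s',s)=0$. Lemma~\ref{lem: some singular integrals along the string curve} tells us that the integral over $s'\in\BT$ vanishes for every fixed $s$, so a nonpositive continuous integrand must vanish identically. This upgrades to $\Phi(s_1,s_2)\equiv 0$ on $\BT\times\BT$ by continuity of $\Phi$ (using the well-stretched and $C^2$ assumptions).

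For the second claim, fix any $s\in\BT$ and, by translating and rotating the plane (which leaves $\Phi$ invariant), assume $X(s)=0$ and $X'(s)>0$. The identity $\Phi(s',s)\equiv 0$ together with the formula above forces $X'(s')/X(s')^2\in\BR$ for all $s'\neq s$. Observe that
\[
\f{X'(s')}{X(s')^2}=-\f{d}{ds'}\!\left(\f{1}{X(s')}\right),
\]
so $\Im[1/X(s')]$ is constant in $s'$ on the connected set $\BT\setminus\{s\}$. Call that constant $-C_*$. The locus $\{z\in\BC:\Im(1/z)=-C_*\}$ is a straight line through the origin when $C_*=0$ and a circle through the origin otherwise. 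The first possibility is excluded because a continuous injective map from the compact set $\BT$ cannot have image in $\BR$ (that would force $X(\BT)$ to be an arc, which is not homeomorphic to $\BT$). In the second case $X(\BT)$ is contained in this circle, and again by injectivity and compactness it must be the entire circle.

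The main technical subtlety I anticipate is the passage from the pointwise vanishing of a single principal-value integrand to the global identity $\Phi\equiv 0$; the sign test makes this immediate, but it implicitly uses the regularity of $\Phi$ at the diagonal, which is guaranteed by the standing assumptions \ref{assumption: geometry}--\ref{assumption: non-degeneracy}. The rest is algebraic once one notices the exact-derivative structure of $X'/X^2$, and the injectivity-plus-compactness argument cleanly rules out degenerate cases so that $X(\BT)$ really is a full circle rather than a proper arc.
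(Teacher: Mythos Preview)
Your proposal is correct and follows essentially the same approach as the paper's proof: the sign argument combined with Lemma~\ref{lem: some singular integrals along the string curve} for the first claim, and the observation that $X'(s')/X(s')^2=-\frac{d}{ds'}(1/X(s'))$ (which the paper uses implicitly) together with injectivity for the second. Your treatment is in fact slightly more explicit about the derivative identity and the topological obstruction in the line case, but the two arguments are otherwise identical.
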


\begin{lem}
\label{lem: discussions on the angles}
Suppose $\Phi(s,s')\in[\Phi_-, \Phi_+]\subset[-\f{\pi}{4},\f{\pi}{4}]$ for all $s,s'\in \T$.
Let $\Phi_*:=\max\{\Phi_+,|\Phi_-|\}\leq \f{\pi}{4}$.
For distinct $s',s_1,s_2\in \T$, denote
\[
\Phi_1:=\Phi(s',s_1),\quad
\Phi_2:=\Phi(s',s_2),\quad
\theta:=\Phi_1+\Phi_2-\Phi(s_1,s_2).
\]
Then $\theta,2\Phi_1,2\Phi_2 \in[2\Phi_-, 2\Phi_+]$, and the following holds.

\begin{enumerate}[label = (\roman*)]

\item If $\Phi(s_1,s_2)=\Phi_+=\Phi_*$, then $|\Phi_1-\Phi_2|\leq \Phi_+$.
Moreover, $\sin\theta-\sin 2\Phi_j\leq 0$ $(j = 1,2)$, and
    \[
    (\sin\theta-\sin 2\Phi_1)+(\sin\theta-\sin 2\Phi_2)\leq 2\sin\Phi_* - 2\sin 2\Phi_*.
    \]

\label{statement: sin theta - sin 2 Phi_j |Phi_*| no greater than pi over 4}

\item If $\Phi(s_1,s_2)=\Phi_-=-\Phi_*$, then $|\Phi_1-\Phi_2|\leq |\Phi_-|$.
    Moreover, $\sin\theta-\sin 2\Phi_j\geq 0$ $(j = 1,2)$, and
    \[
    (\sin\theta-\sin 2\Phi_1)+(\sin\theta-\sin 2\Phi_2)\geq 2\sin 2\Phi_* -2\sin \Phi_*.
    \]
\end{enumerate}

\begin{proof}
That $\theta,2\Phi_1,2\Phi_2 \in[2\Phi_-, 2\Phi_+]$ follows from Lemma \ref{lem: range of argument of theta}.

Suppose $\Phi(s_1,s_2) = \Phi_+$.
If $\Phi_+ =0$, there is nothing to prove, as $\Phi\equiv 0$ by virtue of Lemma \ref{lem: zero Phi_+ or Phi_-}.
If $\Phi(s_1,s_2) = \Phi_+>0$, $s_1$ and $s_2$ must be distinct.
By Lemma \ref{lem: range of argument of theta}, for any $s'\neq s_1,s_2$,
\[
\Phi(s_1,s_2)+\Phi(s',s_1)-\Phi(s',s_2)\leq 2\Phi_+.
\]
Since $\Phi(s_1,s_2) = \Phi_+$, we obtain $\Phi_1-\Phi_2\leq \Phi_+$.
Interchanging $s_1$ and $s_2$ yields $\Phi_1-\Phi_2\geq -\Phi_+$.
Hence, $|\Phi_1-\Phi_2|\leq \Phi_+$.
This further implies $\th \leq 2\Phi_j$ for $j = 1,2$ by the definition of $\th$.
Given $\Phi_*\leq \f{\pi}{4}$,
$\th,2\Phi_j\in [-\f{\pi}{2},\f{\pi}{2}]$.
This together with $\th\leq 2\Phi_j$ implies $\sin\theta-\sin 2\Phi_j\leq 0$.

Now we additionally assume $\Phi_+=\Phi_*$.
We derive that
\[
(\sin\theta-\sin 2\Phi_1)+(\sin\theta-\sin 2\Phi_2)
= 2\sin \th - 2\sin (\Phi_1+\Phi_2) \cos(\Phi_1-\Phi_2).
\]
If $\Phi_1+\Phi_2 \leq 0$, we have
\begin{align*}
&\th+\Phi_+ /2 = \Phi_1+\Phi_2-\Phi_+/2 \leq \Phi_1 + \Phi_2 \leq 0,\\
&\th+\Phi_+ /2 \geq 2\Phi_- +\Phi_+/2\geq 
-2\Phi_* +\Phi_*/2 =-3\Phi_*/2
>-\pi/2.
\end{align*}
Hence,
\beq
\begin{split}
&\;(\sin\theta-\sin 2\Phi_1)+(\sin\theta-\sin 2\Phi_2)\\
\leq &\; 2\sin \th - 2\sin (\Phi_1+\Phi_2)=2\sin \th - 2\sin (\th+\Phi_+)\\
= &\; -4\cos (\th+\Phi_+/2)\sin (\Phi_+/2)\leq -4\cos (-3\Phi_*/2)\sin (\Phi_*/2)\\
= &\; 
2\sin \Phi_* - 2\sin 2\Phi_*.
\end{split}
\label{eqn: bound when Phi_1+Phi_2 is negative}
\eeq
Next consider $\Phi_1+\Phi_2 \geq 0$.
We find
\[
0 \leq |\Phi_1-\Phi_2|= 2\max(\Phi_1,\Phi_2) - (\Phi_1+\Phi_2)\leq 2\Phi_+ - \Phi_1-\Phi_2\leq 2\Phi_+ \leq \pi/2.
\]
This implies
\[
\begin{split}
&\;(\sin\theta-\sin 2\Phi_1)+(\sin\theta-\sin 2\Phi_2)\\
\leq &\; 2\sin\theta - 2\sin (\Phi_1+\Phi_2) \cos(2\Phi_+-\Phi_1-\Phi_2)\\
= &\; 2\sin\theta -\sin(2\Phi_1+2\Phi_2-2\Phi_+) -\sin (2\Phi_+)\\ 
= &\; 2\sin\theta -\sin 2\theta -\sin 2\Phi_* .
\end{split}
\]
Since $f(\theta):=2\sin\theta -\sin 2\theta$ is increasing on $[-\pi/2,\pi/2]$,
and $-\pi/2\leq 2\Phi_-\leq \th=\Phi_1+\Phi_2-\Phi_+\leq \Phi_+ =\Phi_* <\pi/2$,
we find $f(\th)\leq f(\Phi_*)$.
Hence,
\[
\begin{split}
&\;(\sin\theta-\sin 2\Phi_1)+(\sin\theta-\sin 2\Phi_2)\\
\leq &\;2\sin\Phi_* -\sin 2\Phi_*  -\sin  2\Phi_* = 2\sin\Phi_* -2\sin 2\Phi_* ,
\end{split}
\]
Combining this with  \eqref{eqn: bound when Phi_1+Phi_2 is negative}, we obtain
\ref{statement: sin theta - sin 2 Phi_j |Phi_*| no greater than pi over 4}.

The case $\Phi(s_1,s_2) = \Phi_-$ can be studied similarly, which is omitted.
\end{proof}
\end{lem}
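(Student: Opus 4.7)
The plan is to concentrate on part (i) and then deduce (ii) by applying (i) to $-\Phi$, which still satisfies all hypotheses of the lemma (the bounds become $[-\Phi_+, -\Phi_-]$, the symmetry $\Phi(s, s') = \Phi(s', s)$ is preserved, and the range-of-argument conclusion of Lemma \ref{lem: range of argument of theta} transforms correctly under $\theta \mapsto -\theta$). The inclusion $\theta, 2\Phi_1, 2\Phi_2 \in [2\Phi_-, 2\Phi_+]$ is immediate from Lemma \ref{lem: range of argument of theta}. For (i), the degenerate case $\Phi_+ = 0$ forces $\Phi \equiv 0$ by Lemma \ref{lem: zero Phi_+ or Phi_-}, so I assume $\Phi_+ > 0$; in particular $s_1 \neq s_2$.

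To extract $|\Phi_1 - \Phi_2| \leq \Phi_+$, I would reapply Lemma \ref{lem: range of argument of theta} with the base point moved to $s_1$ and then to $s_2$: the triple $(s_1, s_2, s')$ gives $\Phi(s_1, s_2) + \Phi(s_1, s') - \Phi(s_2, s') \in [2\Phi_-, 2\Phi_+]$, so with $\Phi(s_1, s_2) = \Phi_+$ we obtain $\Phi_1 - \Phi_2 \leq \Phi_+$; swapping the roles of $s_1$ and $s_2$ gives the reverse inequality. Writing $\theta = \Phi_1 + \Phi_2 - \Phi_+$ then gives $\theta - 2\Phi_j = \Phi_{3-j} - \Phi_j - \Phi_+ \leq 0$, hence $\theta \leq 2\Phi_j$. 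Because $\Phi_* \leq \pi/4$ confines $\theta$ and $2\Phi_j$ to $[-\pi/2, \pi/2]$ where $\sin$ is monotone, the pointwise sign bound $\sin\theta - \sin 2\Phi_j \leq 0$ follows.

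The sharp sum inequality is the crux. I would rewrite
\[
(\sin\theta - \sin 2\Phi_1) + (\sin\theta - \sin 2\Phi_2) = 2\sin\theta - 2\sin(\Phi_1 + \Phi_2)\cos(\Phi_1 - \Phi_2)
\]
and split on the sign of $\Phi_1 + \Phi_2$. If $\Phi_1 + \Phi_2 \leq 0$, the crude bound $\cos(\Phi_1 - \Phi_2) \leq 1$ (admissible since $\sin(\Phi_1 + \Phi_2) \leq 0$) together with the substitution $\Phi_1 + \Phi_2 = \theta + \Phi_+$ produces, after sum-to-product, the upper bound $-4\cos(\theta + \Phi_+/2)\sin(\Phi_+/2)$. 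The range $\theta + \Phi_+/2 \in [-3\Phi_*/2, -\Phi_+/2]$ sits inside $(-\pi/2, 0]$ precisely because $\Phi_* \leq \pi/4$, so cosine there is at least $\cos(3\Phi_*/2)$, giving $-4\cos(3\Phi_*/2)\sin(\Phi_*/2)$, which equals $2\sin\Phi_* - 2\sin 2\Phi_*$ by another sum-to-product identity. If $\Phi_1 + \Phi_2 \geq 0$, I would use $|\Phi_1 - \Phi_2| \leq 2\Phi_+ - (\Phi_1 + \Phi_2)$ (from $|\Phi_1 - \Phi_2| = 2\max\Phi_j - (\Phi_1 + \Phi_2)$) combined with product-to-sum to obtain
\[
2\sin\theta - 2\sin(\Phi_1 + \Phi_2)\cos(\Phi_1 - \Phi_2) \leq 2\sin\theta - \sin 2\theta - \sin 2\Phi_*.
\]
Setting $f(x) := 2\sin x - \sin 2x$, a direct computation gives $f'(x) = 4\sin(3x/2)\sin(x/2) \geq 0$ on $[-2\pi/3, 2\pi/3]$; since in this case $\theta \in [-\Phi_*, \Phi_*] \subset [-\pi/4, \pi/4]$, monotonicity yields $f(\theta) \leq f(\Phi_*)$, closing the estimate.

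The main obstacle I anticipate is confirming that the sharp constant $2\sin\Phi_* - 2\sin 2\Phi_*$ is attained with no slack in either sub-case: both rely critically on the threshold $\Phi_* \leq \pi/4$, as Case A needs $[-3\Phi_*/2, 0] \subset (-\pi/2, \pi/2)$ while Case B needs the range of $\theta$ to remain in the monotonicity window of $f$. Once this bookkeeping is kept tight, the two sub-cases match, and part (ii) follows from part (i) via the substitution $\Phi \mapsto -\Phi$ described at the start.
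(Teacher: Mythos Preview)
Your proposal is correct and follows essentially the same approach as the paper's own proof: the same reduction via Lemma~\ref{lem: range of argument of theta}, the same derivation of $|\Phi_1-\Phi_2|\leq \Phi_+$ by permuting the base point, the same rewriting $2\sin\theta - 2\sin(\Phi_1+\Phi_2)\cos(\Phi_1-\Phi_2)$, and the same split on $\mathrm{sgn}(\Phi_1+\Phi_2)$ with the function $f(x)=2\sin x-\sin 2x$ handling the positive case. Your treatment of part (ii) via the substitution $\Phi\mapsto -\Phi$ is a slightly cleaner way to say what the paper dismisses as ``similarly,'' and your explicit formula $f'(x)=4\sin(3x/2)\sin(x/2)$ and tighter range $\theta\in[-\Phi_*,\Phi_*]$ in Case~B are cosmetic refinements, not substantive differences.
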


Now we are ready to show that $|\Phi(s_1,s_2,t)|$ satisfies a maximum principle and a decay estimate if $\sup_{s_1,s_2}|\Phi(s_1,s_2,0)|< \f{\pi}{4}$.

\begin{prop}
\label{prop: bound for the max |Phi|}
Define $\Phi_*(t):=\sup_{s_1,s_2\in \BT}|\Phi(s_1,s_2,t)|$.
If $\Phi_*(0)<\f{\pi}{4}$, then $\Phi_*(t)$ is a non-increasing Lipschitz function.
It satisfies
\[
0\leq \Phi_*(t)\leq \Phi_*(0) \min\big\{e^{- \mu t},\,Ce^{-t/\pi^2}\big\},
\]
where $\mu := (4-2\sqrt{2})/\pi^3$ and where $C$ is a universal constant.

In particular, if $\Phi(s_1,s_2,0)\equiv 0$, then $\Phi(s_1,s_2,t)\equiv 0$ for all $t$.
In other words, if $X_0(\BT)$ is a circle, then $X(\BT,t)$ must be a circle of the same radius.

\begin{proof}
Take an arbitrary $t$.
We assume that for some distinct $s_1,s_2\in\BT$, $|\Phi(s_1,s_2,t)| = \Phi_*(t)$.
Without loss of generality, assume $\Phi(s_1,s_2,t)\geq 0$, so $\Phi_* = \Phi_+ \geq |\Phi_-|$.
We shall derive an upper bound for $\pa_t \Phi(s_1,s_2)$.

Recall that $d_*$ was defined in \eqref{eqn: diameter of X}.
By Lemma \ref{lem: equation for Phi} and Lemma \ref{lem: discussions on the angles}\ref{statement: sin theta - sin 2 Phi_j |Phi_*| no greater than pi over 4},
\beqo
\begin{split}
&\;\partial_t\Phi(s_1,s_2)\\
\leq &\; \frac{1}{4\pi} \int_{\T}
\frac{|X'(s')|^2}{d_*^2}
\big[(\sin \th - \sin 2\Phi(s',s_1))+
(\sin \th - \sin 2\Phi(s',s_2))\big]\, ds'\\
\leq &\; - \frac{\sin 2\Phi_* - \sin \Phi_*}{2\pi d_*^2}
\int_{\T} |X'(s')|^2\, ds'.
\end{split}
\eeqo
The Cauchy-Schwarz inequality implies that
\[
(2d_*)^2 \leq \CL(t)^2 \leq \int_{\T} |X'(s')|^2\, ds' \int_\BT 1\,ds',
\]
so we obtain
\beqo
\partial_t\Phi(s_1,s_2)
\leq - \frac{1}{\pi^2}\big(\sin 2\Phi_* - \sin \Phi_*\big).
\eeqo
Similar analysis can be carried out if $\Phi(s_1,s_2,t) = -\Phi_* \leq 0$.

Given the assumptions \ref{assumption: geometry}-\ref{assumption: non-degeneracy} on $X$, $\Phi(s_1,s_2,t)$ is $C^1$ in $\BT\times \BT\times [0,T]$, so $\Phi_*(t)$ is a Lipschitz function.
By combining the above estimates and following the argument in e.g.\;\cite{cordoba2009maximum}, we find that, if $\Phi_*(t)<\f{\pi}{4}$, then $\Phi_*(t)\geq 0$ satisfies for almost all $t$ that
\[
\Phi_*'(t)
\leq - \frac{1}{\pi^2}\big(\sin 2\Phi_*(t) - \sin \Phi_*(t)\big)
\leq - \f{4-2\sqrt{2}}{\pi^3}\Phi_*(t).
\]
Since
$\Phi_*(0)< \f{\pi}{4}$, this implies $\Phi_*(t)<\f{\pi}{4}$ for all $t\in [0,T]$,
and
$\Phi_*(t)\leq \Phi_*(0)e^{-\mu t}$ with $\mu = (4-2\sqrt{2})/\pi^3$.
On the other hand, the above differential inequality can be written as
\[
\f{d}{dt}\left[\f12 \ln |\cos\Phi_*-1| + \f16\ln (\cos \Phi_* +1 ) -\f23 \ln \left|\cos \Phi_* -\f12\right|\right] \leq -\f{1}{\pi^2},
\]
Hence, with $\Phi_*(0)< \f{\pi}{4}$,
\[
\f{(1-\cos \Phi_*) (\cos\Phi_* + 1)^{1/3}}{(\cos\Phi_* -\f12)^{4/3}}
\leq \f{(1-\cos \Phi_*(0)) (\cos\Phi_*(0) + 1)^{1/3}}{(\cos\Phi_*(0) -\f12)^{4/3}}
e^{-2t/\pi^2},
\]
and thus
\[
1-\cos \Phi_*(t)\leq C\Phi_*(0)^2e^{-2t/\pi^2},
\]
where $C>0$ is a universal constant.
Then the desired decay estimate follows.

The last claim follows from the monotonicity of $\Phi_*(t)$, Lemma \ref{lem: zero Phi_+ or Phi_-}, and the time-invariance of $R_X$.
\end{proof}
\end{prop}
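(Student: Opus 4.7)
The plan is to run a classical maximum-principle argument at the level of the supremum $\Phi_*(t)$, using Lemma \ref{lem: equation for Phi} together with the sharp pointwise bound furnished by Lemma \ref{lem: discussions on the angles}. Fix $t$ and pick distinct $s_1,s_2\in\BT$ realizing $|\Phi(s_1,s_2,t)|=\Phi_*(t)$; by symmetry I may assume $\Phi(s_1,s_2,t)=\Phi_+=\Phi_*$. At this maximizing pair, Lemma \ref{lem: equation for Phi} expresses $\partial_t\Phi(s_1,s_2)$ as the average (against the kernels $|X'(s')|^2/|X(s')-X(s_j)|^2$) of the quantities $\sin\theta-\sin 2\Phi(s',s_j)$, each of which is non-positive by Lemma \ref{lem: discussions on the angles}\ref{statement: sin theta - sin 2 Phi_j |Phi_*| no greater than pi over 4}, with the summed upper bound $2\sin\Phi_*-2\sin 2\Phi_*<0$. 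Replacing the kernel denominators by $d_*^{-2}$ (using $|X(s')-X(s_j)|\leq d_*$) only enlarges the right-hand side when the integrand is negative, so I obtain
\[
\partial_t\Phi(s_1,s_2)\leq -\frac{\sin 2\Phi_*-\sin\Phi_*}{2\pi d_*^2}\int_\BT |X'(s')|^2\,ds'.
\]

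Next I exploit the elementary isoperimetric-type bound $2d_*\leq \CL(t)$ together with Cauchy--Schwarz $\CL(t)^2\leq 2\pi\int_\BT|X'|^2\,ds'$ to conclude $\int_\BT |X'|^2\geq 2d_*^2/\pi$, which cleanly cancels the $d_*^2$ and yields the purely geometric bound
\[
\partial_t\Phi(s_1,s_2)\leq -\frac{1}{\pi^2}\bigl(\sin 2\Phi_*-\sin\Phi_*\bigr).
\]
The case $\Phi(s_1,s_2,t)=-\Phi_*$ is symmetric. To pass from a pointwise inequality at a maximizing pair to a differential inequality for the Lipschitz envelope $\Phi_*(t)$ itself, I invoke the standard Rademacher-type argument used for nonlocal maximum principles (e.g., as in Córdoba--Córdoba \cite{cordoba2009maximum}), enabled by the $C^1$ regularity of $\Phi(\cdot,\cdot,t)$ guaranteed by assumptions \ref{assumption: geometry}--\ref{assumption: non-degeneracy}. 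This step is the main technical obstacle, but it is by now standard once the pointwise bound is in hand.

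With $\Phi_*(t)\in[0,\pi/4)$ and $\Phi_*'(t)\leq -\pi^{-2}(\sin 2\Phi_*-\sin\Phi_*)=-\pi^{-2}\sin\Phi_*(2\cos\Phi_*-1)$ almost everywhere, I extract two decay rates. For the linear rate, on $[0,\pi/4]$ I bound $\sin\Phi\geq \tfrac{2\sqrt{2}}{\pi}\Phi$ (concavity) and $2\cos\Phi-1\geq \sqrt{2}-1$, giving $\Phi_*'\leq -\mu\Phi_*$ with $\mu=(4-2\sqrt 2)/\pi^3$, hence $\Phi_*(t)\leq \Phi_*(0)e^{-\mu t}$. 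For the sharp rate, I separate variables and use partial fractions in $u=\cos\Phi_*$ on $(u-1)(u+1)(2u-1)$ (with residues $\tfrac12,\tfrac16,-\tfrac43$) to obtain
\[
\frac{d}{dt}\left[\tfrac12\ln|\cos\Phi_*-1|+\tfrac16\ln(\cos\Phi_*+1)-\tfrac23\ln|\cos\Phi_*-\tfrac12|\right]\leq -\tfrac{1}{\pi^2},
\]
integration of which and the inequality $1-\cos\Phi_*\geq 2\Phi_*^2/\pi^2$ together yield $\Phi_*(t)\leq C\Phi_*(0)e^{-t/\pi^2}$.

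Finally, monotonicity of $\Phi_*(t)$ is immediate from the differential inequality (whenever $\Phi_*>0$ it strictly decreases), so if $\Phi_*(0)=0$ then $\Phi_*\equiv 0$; by Lemma \ref{lem: zero Phi_+ or Phi_-} this forces $X(\BT,t)$ to be a circle, whose radius must equal $R_X$ by the time-invariance of the enclosed area. I expect the only delicate part of the write-up to be the justification of the envelope inequality, which requires care in separating the times where the maximum is attained at a unique pair from those where it is not; everything else is a direct chase through Lemmas \ref{lem: equation for Phi}--\ref{lem: discussions on the angles} and elementary ODE estimates.
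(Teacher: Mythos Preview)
Your proposal is correct and follows essentially the same approach as the paper's proof: the same use of Lemmas \ref{lem: equation for Phi} and \ref{lem: discussions on the angles}\ref{statement: sin theta - sin 2 Phi_j |Phi_*| no greater than pi over 4} at a maximizing pair, the same replacement of $|X(s')-X(s_j)|^2$ by $d_*^2$ (valid because each $\sin\theta-\sin 2\Phi_j\le 0$ individually), the same Cauchy--Schwarz/isoperimetric step, and the same two ODE estimates (linear rate via concavity on $[0,\pi/4]$, sharp rate via partial fractions in $u=\cos\Phi_*$). Your write-up in fact makes explicit a couple of points the paper leaves to the reader, such as the justification of $\mu=(4-2\sqrt2)/\pi^3$ and the residues in the partial-fraction decomposition.
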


\subsection{Remark on the case of general elasticity}
\label{sec: general elasticity}
Although this paper is mainly focused on the 2-D Peskin problem with a Hookean string, we make a detour in this subsection to show that the maximum principle for $|\Phi|$ also holds for strings with more general elasticity laws.
Some new notations and equations will be introduced here, but we note that they should only apply within this subsection and Remark \ref{rmk: general elasticity curvature} below.

Assume that in \eqref{eqn: Stokes equation}, $F_X(s,t)$ is given by
\[
F_X(s,t) = \pa_s \big[ q(s,t)X'(s,t)\big]
\]
for some positive function $q(s,t)$ that is as smooth as $X'(s,t)$ (cf.\;\eqref{eqn: Lagrangian representation of the elastic force} and \ref{assumption: geometry}-\ref{assumption: non-degeneracy} in Section \ref{sec: preliminary}).
Then \eqref{eqn: contour dynamic equation complex form} should be modified to become
\beq
\pa_t X(s)
= \f{1}{4\pi}\mathrm{p.v.}\int_\BT \Re \left[\f{X'(s')^2}{(X(s')-X(s))^2} \right] \big(X(s')-X(s)\big)q(s') \,ds'.
\label{eqn: contour dynamic equation complex form general tension}
\eeq
Suppose that $X = X(s,t)$ solves this equation with $X(0,t) = X_0(s)$, and satisfies the assumptions in Section \ref{sec: preliminary}.
We still define $\Phi(s_1,s_2,t)$ by \eqref{eqn: def of Phi}, and let $\Phi_*(t)$ be defined as in Proposition \ref{prop: bound for the max |Phi|}.
Then we claim that, if $\Phi_*(0)<\f{\pi}{4}$, $\Phi_*(t)$ should still be a non-increasing Lipschitz function in $t$.
This claim can be justified by simply following the same argument as above, but we shall present a different proof that avoids lengthy calculation.

Fix $t$.
Let
\[
k_0 : = 2\pi \left(\int_\BT\f{1}{q(s)}\,ds\right)^{-1} > 0.
\]
and define for all $s\in [0,2\pi)$,
\[
\xi (s) := \int_0^{s}\f{k_0}{q(s')}\,ds'.
\]
Thanks to the assumptions on $q$ (see \ref{assumption: geometry}-\ref{assumption: non-degeneracy}), 
$s\mapsto \xi(s)$ is a strictly increasing $C^4$-bijection from $[0,2\pi)$ to itself, so it can be further understood as a $C^4$-diffeomorphism from $\BT$ to $\BT$.
Then we define $Y(\xi,t)$ such that
\beqo
Y(\xi(s),t) \equiv X(s,t)\mbox{ for all }s\in \BT.
\eeqo
$Y$ is well-defined, with $Y(\BT,t) = X(\BT,t)$ and $Y'(\xi(s)) = k_0^{-1}q(s)X'(s)$.
In addition, by \eqref{eqn: contour dynamic equation complex form general tension} and change of variables,
\beqo
\begin{split}
\pa_t X(s,t)
= &\; \f1{4\pi}\pv \int_\BT \Re \left[\f{ k_0^{-1}q(s')^2X'(s')^2}{(X(s')-X(s))^2} \right] \big(X(s
')-X(s)\big)\f{k_0}{q(s')}\,ds'\\
= &\; \f1{4\pi}\pv \int_\BT \Re \left[\f{k_0 Y'(\xi(s'))^2}{(Y(\xi(s'))-Y(\xi(s)))^2} \right] \big(Y(\xi(s
'))-Y(\xi(s))\big)\,d\xi(s')\\
=
&\; \f1{4\pi}\pv\int_\BT \Re \left[\f{k_0 Y'(\eta)^2}{(Y(\eta)-Y(\xi(s)))^2} \right] \big(Y(\eta)-Y(\xi(s))\big)\,d\eta.
\end{split}
\eeqo
Note that given the assumptions on $q$, all the principal value integrals above can be justified.
If we let $Y(\xi,\tau)$ solve the Peskin problem with the Hookean elasticity (cf.\;\eqref{eqn: contour dynamic equation complex form}),
\[
\pa_\tau Y(\xi,\tau) = \f{1}{4\pi}\mathrm{p.v.}\int_\BT \Re \left[\f{ Y'(\eta,\tau)^2}{(Y(\eta,\tau)-Y(\xi,\tau))^2} \right] \big(Y(\eta,\tau)-Y(\xi,\tau)\big)\,d\eta,
\]
then for the given $t$,
\beq
\pa_t X(s,t) = k_0\pa_\tau Y(\xi(s),t).
\label{eqn: relation between time derivatives of X and Y}
\eeq
Please be reminded that this may not hold for later times.
Using this, we calculate that
\begin{align}
&\Im\f{\pa_t X'(s,t)}{X'(s,t)}
=\Im \f{k_0\pa_\tau Y'(\xi(s),t)\xi'(s)}{Y'(\xi(s),t)\xi'(s)}
=k_0 \left.\left[\Im \f{\pa_\tau Y'(\xi,\tau)}{Y'(\xi,\tau)}\right]\right|_{(\xi,\tau) = (\xi(s),t)},\label{eqn: time derivative of alpha change of variables}\\
&\Im\f{\pa_t(X(s_1,t)-X(s_2,t))}{X(s_1,t)-X(s_2,t)}
= k_0\left. \left[\Im\f{\pa_\tau(Y(\xi_1,\tau)-Y(\xi_2,\tau))}{Y(\xi_1,\tau)-Y(\xi_2,\tau)}\right]
\right|_{(\xi_1,\xi_2,\tau) = (\xi(s_1),\xi(s_2),t)}.\nonumber
\end{align}
Define $\Phi_Y = \Phi_Y(\xi_1,\xi_2,\tau)$ in terms of $Y(\xi,\tau)$ by \eqref{eqn: def of Phi}.
Combining these identities with \eqref{eqn: expression of time derivative of Phi} yields
\beq
\pa_t \Phi(s_1,s_2,t) = k_0 \pa_\tau \Phi_Y(\xi(s_1),\xi(s_2),t).
\label{eqn: relation between time derivatives of two different Phis}
\eeq
Following the argument in Proposition \ref{prop: bound for the max |Phi|}, we conclude that $\Phi_*(t)$ is a non-increasing Lipschitz function, which satisfies
\[
\Phi_*'(t) \leq  - \frac{k_0}{\pi^2}\big(\sin 2\Phi_*(t) - \sin \Phi_*(t)\big)
\]
for almost all $t$.
Given further information of $q(s,t)$, a quantitative decay estimate for $\Phi_*(t)$ may also be established, but we omit the discussion here.

It may be of independent interest that,
by Lemma \ref{lem: equation for Phi}, \eqref{eqn: relation between time derivatives of two different Phis}, and the definitions of $Y$, $\Phi_Y$, and $\th_Y$, $\Phi(s_1,s_2,t)$ in this case satisfies
\beqo
\begin{split}
\pa_t\Phi(s_1,s_2)
= &\; \frac{1}{4\pi} \pv\int_{\T}
\left\{\frac{|X'(s')|^2(\sin \th(s',s_1,s_2)- \sin 2\Phi(s',s_1))}{|X(s')-X(s_1)|^2}\right. \\
&\;\qquad  \qquad \quad \left.
+ \frac{|X'(s')|^2(\sin \th(s',s_1,s_2) - \sin 2\Phi(s',s_2))}{|X(s')-X(s_2)|^2}\right\}q(s')\, ds'.
\end{split}
\eeqo


\subsection{Geometric characterizations of $X(\BT,t)$}

It is conceivable that the bound for $\Phi_*(t)$ should provide useful geometric information of the curve $X(\BT,t)$.
We prove a few in the following proposition.
The time-dependence is omitted.

\begin{prop}
\label{prop: geometric property of the curve}
Suppose $ \Phi_* < \pi/2$. 
Recall that $R_X$ and $d_*$ were defined in \eqref{eqn: effective radius} and \eqref{eqn: diameter of X}, respectively.
Then the following holds.

\begin{enumerate}[label = (\roman*)]
\item \label{property: diameter lower bound} $d_*\geq 2R_X$.
\item
\label{property: polar coordinate representation}
There exists $z_*\in \BC\setminus X(\BT)$, such that
$X(\BT)$ can be parameterized in the polar coordinate centered at $z_*$, i.e., there exists $\rho = \rho(\om)$ defined on $\BT$, such that
\[
X(s) - z_* = \rho(\om(s))e^{i\om(s)}, \mbox{ where }\om(s) = \arg(X(s)-z_*) \in \BT = [-\pi,\pi).
\]
Here $\om(\cdot)$ is a bijection from $\BT$ to $\BT$, and it is orientation-preserving in the sense that, as $s$ increases from $-\pi$ to $\pi$ in $\BT$, $\om(s)$ goes through $\BT$ in the same positive (counter-clockwise) direction.
Moreover, $|\rho'(\om)/\rho(\om)|\leq \tan \Phi_*$ and
\beq
\rho(\om)\in \left[\f{d_*}2 \tan \left(\f{\pi}{4}-\f{\Phi_*}{2}\right),\,
\f{d_*}2 \tan \left(\f{\pi}{4}+\f{\Phi_*}{2}\right)\right].
\label{eqn: upper and lower bounds for rho}
\eeq
As a result,
\[
R_X \geq \f{d_*}2 \tan \left(\f{\pi}{4}-\f{\Phi_*}{2}\right).
\]

\item
Let $d(s) := \sup_{s'\in \BT} |X(s)-X(s')|$.
For all $s\in \BT$,
\beq
d(s)\geq d_* \tan\left(\f{\pi}{4}-\f{\Phi_*}{2}\right)\geq 2R_X \tan\left(\f{\pi}{4}-\f{\Phi_*}{2}\right).
\label{eqn: largest distance from a point}
\eeq

\item \label{property: chord-arc}
The curve $X(\BT)$ satisfies a chord-arc condition, i.e., for all $s,s'\in \BT$,
\beq
|X(s)-X(s')|\geq d_* \tan \left(\f{\pi}{4}-\f{\Phi_*}{2}\right) \sin \left[\f{L(s,s')}{d_*}(1-\sin \Phi_*)\right].
\label{eqn: chord-arc condition}
\eeq
Here, $L(s,s')$ denotes the length of the shorter arc between $X(s)$ and $X(s')$.
For $s\leq s'\leq s+2\pi$ without loss of generality,
\[
L(s,s')=L(s',s)= \min\left\{\int_{s}^{s'}|X'(s'')|\,ds'',\, \int_{s'}^{s+2\pi}|X'(s'')|\,ds''  \right\}.
\]
It satisfies
\beq
L(s,s')\leq \f{\pi d_*}{2(1-\sin \Phi_*)}.
\label{eqn: upper bound for arclength}
\eeq
In particular, if $\Phi_*\leq \pi/4$, there exists some universal constant $C>0$, such that
\[
|X(s)-X(s')|\geq C  L(s,s').
\]

\item Recall that $\CL(t)$ was defined in \eqref{eqn: length}.
If $\Phi_*\leq \pi/4$, for any $s\in \BT$,
\beq
c_1 R_X \leq 2d(s)\leq 2d_* \leq \CL(t) \leq c_2 d_* \leq c_3 R_X,
\label{eqn: R_X d_* CL comparable}
\eeq
where $c_j$ $(j = 1,2,3)$ are universal constants.
\end{enumerate}

\begin{proof}
By definition, $d_*$ is also the diameter of the convex hull of $X(\BT)$.
By the isodiametric inequality \cite{Gruber2007}, $\pi R_X^2 \leq \f14 \pi d_*^2$, which gives $d_*\geq 2R_X$.

Assume that $d_*$ is attained at some $s_1,s_2\in \BT$, i.e., $d_* = |X(s_1)-X(s_2)|$.
By the maximality of $|X(s_1)-X(s_2)|^2$,
\beqo
\big|\arg X'(s_j) - \arg (X(s_1)-X(s_2))\big|
= \f{\pi}{2},\quad j = 1,2.
\eeqo
Since $|\Phi(s_1,s_2)|< \pi/2$, it must hold $|\arg X'(s_1)-\arg X'(s_2) | = \pi$ (as otherwise $|\Phi(s_1,s_2)|=\pi $) and
\beq
\arg \left[\f{X'(s_1)}{X(s_1)-X(s_2)}\right]
= -\arg \left[\f{X'(s_2)}{X(s_1)-X(s_2)}\right] \in \left\{\f{\pi}{2},\,- \f{\pi}{2}\right\}.
\label{eqn: X'(s_j) is perpendicular to X(s_1)-X(s_2)}
\eeq

Denote the straight line that goes through $X(s_1)$ and $X(s_2)$ by $l$.
We claim that $X(\BT)\cap l = \{X(s_1),\, X(s_2)\}$.
Indeed, if not, suppose $X(s)$ lies on this line $(s\neq s_1,s_2)$.
Since
\[
\f{J(s_1,s)}{J(s_2,s)}
=\f{X'(s_1)}{X'(s_2)}\cdot \f{(X(s)-X(s_2))^2}{(X(s)-X(s_1))^2},
\mbox{ and }\f{(X(s)-X(s_2))^2}{(X(s)-X(s_1))^2}>0,
\]
we find
$\Phi(s_1,s)-\Phi(s_2,s)=\arg [J(s_1,s)/J(s_2,s)]=\arg X'(s_1)-\arg X'(s_2) =\pi$ in the modulo $2\pi$.
This contradicts with $|\Phi(s_1,s)|< \pi/2$ and $|\Phi(s_2,s)|< \pi/2$.
Let $z_* := (X(s_1)+X(s_2))/2\not \in X(\BT)$.
In the rest of the proof, we want to show that the curve $X(\BT)$ can be parameterized in the polar coordinate centered at $z_*$.
We proceed in three steps.

\setcounter{step}{0}
\begin{step}
Take an arbitrary $s\in \BT$.
We first bound $|X(s)-z_*|$.
%
By the definition of $\Phi$,
\beqo
\begin{split}
&\; 2\big|\arg(X(s)-X(s_1)) - \arg(X(s)-X(s_2))\big| \\
= &\; \big|\arg X'(s_1)- \arg X'(s_2) + \Phi(s,s_2) - \Phi(s,s_1)\big|
\in \big[\pi-2\Phi_*,\, \pi + 2\Phi_*\big].
\end{split}
\eeqo
Hence,
\[
\left|\cos \big[\arg(X(s)-X(s_1)) - \arg(X(s)-X(s_2))\big]\right|\leq \sin \Phi_*.
\]
By virtue of the law of cosines,
\begin{align*}
d_*^2 = |X(s_1)-X(s_2)|^2
\leq &\;
\left(1+ \sin \Phi_*\right)\big[|X(s)-X(s_1)|^2 +  |X(s)-X(s_2)|^2\big],\\
d_*^2 = |X(s_1)-X(s_2)|^2
\geq &\; (1-\sin \Phi_*)\big[|X(s)-X(s_1)|^2+|X(s)-X(s_2)|^2\big].
\end{align*}
Then using the parallelogram identity,
\beqo
2|X(s)-X(s_1)|^2+2|X(s)-X(s_2)|^2 = d_*^2+ 4|X(s)-z_*|^2,
\eeqo
we obtain that
\beq
\f{d_*}2 \tan \left(\f{\pi}{4}-\f{\Phi_*}{2}\right) \leq |X(s)-z_*|
\leq
\f{d_*}2 \tan \left(\f{\pi}{4}+\f{\Phi_*}{2}\right).
\label{eqn: upper and lower bound for distance to the midpoint}
\eeq
\end{step}

\begin{step}
Next we bound the angle between $X'(s)$ and $(X(s)-z_*)$ by proving that
\beq
\left|\cos \left(\arg\left[ \f{X'(s)}{X(s)-z_*}\right]\right)\right| \leq \sin \Phi_*.
\label{eqn: estimate for cosine of the angle}
\eeq
For convenience, denote $z := X(s)$ and let
\[
\psi(w) := \ln \left(\f{w-X(s_1)}{w-X(s_2)}\right),\quad w\in \mathbb{C}\setminus l.
\]
$\psi$ is well-defined as a single-valued function on $\mathbb{C}\setminus l$, with $\Im\, \psi(w) \in (-\pi,\pi)\setminus \{0\}$.
Define
\[
\zeta := \overline{\psi'(z)} = \overline{\left(\f{X(s_1)-X(s_2)}{(z-X(s_1))(z-X(s_2))}\right)}.
\]
It is worth noting that $\zeta$ gives the tangent direction of the level set of $\Im \,\psi(w)$ at the point $z$, because $\Im(\zeta \psi'(z)) = 0$.
Then we derive that
\begin{align*}
\Phi(s,s_1) 
= &\; \arg \left[\f{X'(s)}{\zeta}\cdot\f{X'(s_1)}{X(s_1)-X(s_2)} \cdot \f{z-X(s_2)}{z-X(s_1)}\right]\\
= &\; \arg \left[\f{X'(s)}{\zeta}\right]
+ \arg \left[\f{X'(s_1)}{X(s_1)-X(s_2)} \right] - \Im\,\psi(z),
\end{align*}
and similarly,
\[
\Phi(s,s_2)
= \arg \left[\f{X'(s)}{\zeta}\right]
+ \arg \left[\f{X'(s_2)}{X(s_1)-X(s_2)} \right] +\Im\,\psi(z).
\]
These equalities should be understood in the modulo $2\pi$.

Take sines and cosines on both sides of them.
In view of \eqref{eqn: X'(s_j) is perpendicular to X(s_1)-X(s_2)}, $|\sin \Phi(s,s_j)|\leq \sin \Phi_*$, and $\cos \Phi(s,s_j)\geq \cos \Phi_*>0$,
we find that
\beqo
\left|\cos \left(\arg \left[\f{X'(s)}{\zeta}\right] \pm \Im\,\psi(z)\right)\right|\leq \sin \Phi_*,
\eeqo
and
\beqo
\sin \left(\arg \left[\f{X'(s)}{\zeta}\right] \pm \Im\,\psi(z)\right)\mbox{ have opposite signs}.
\eeqo
For convenience, denote
\[
\g_1:= \arg \left[\f{X'(s)}{\zeta}\right],\quad
\g_2:= \Im\,\psi(z).
\]
Then they further imply that
\begin{align}
&|\cos \g_1 \cos \g_2|+ |\sin \g_1 \sin \g_2|\leq \sin \Phi_*, \label{eqn: bounding the angle inequality 1}\\
&|\sin \g_1 \cos \g_2|-|\cos \g_1 \sin \g_2| < 0.
\label{eqn: sin having opposite signs}
\end{align}

We also derive that
\begin{align*}
\g_3:= \arg \left[\f{z-z_*}{\zeta}\right]
= &\; \arg \left[[(z-X(s_1)) + (z-X(s_2))]  \psi'(z)\right]\\
= &\;\arg \left[\f{z-X(s_2)}{z-X(s_1)} -\f{z-X(s_1)}{z-X(s_2)}\right]
= \arg \left[e^{-\psi(z)} - e^{\psi(z)}\right].
\end{align*}
Hence,
\beqo
\begin{split}
|\cos \g_3|
= &\; \f{|\Re (e^{-\psi(z)} - e^{\psi(z)})| }
{|e^{-\psi(z)} - e^{\psi(z)}|}\\
\leq &\; \f{(e^{|\Re \,\psi(z)|} - e^{- |\Re\,\psi(z)|})|\cos \Im\,\psi(z)| }
{e^{|\Re \,\psi(z)|} - e^{- |\Re\,\psi(z)|}}
=
|\cos \g_2|,
\end{split}
\eeqo
which also gives $|\sin \g_2|\leq |\sin \g_3|$.
Using this and \eqref{eqn: sin having opposite signs} yields
\[
|\sin \g_1 \cos \g_3| \leq |\sin \g_1 \cos \g_2|
<|\cos \g_1 \sin \g_2| \leq |\cos \g_1 \sin \g_3|.
\]
Hence,
\[
|\sin \g_1 \cos \g_3|-|\cos \g_1 \sin \g_3|
\leq |\sin \g_1 \cos \g_2|-|\cos \g_1 \sin \g_2| < 0.
\]
Since for $j = 2,3$,
\[
1 = \big(|\cos \g_1 \cos \g_j|+ |\sin \g_1 \sin \g_j|\big)^2 + \big(|\sin \g_1 \cos \g_j|-|\cos \g_1 \sin \g_j| \big)^2,
\]
we use \eqref{eqn: bounding the angle inequality 1} to obtain that
\[
|\cos \g_1 \cos \g_3|+ |\sin \g_1 \sin \g_3|
\leq |\cos \g_1 \cos \g_2|+ |\sin \g_1 \sin \g_2|
\leq \sin \Phi_*.
\]
Since
\[
\arg\left[ \f{X'(s)}{X(s)-z_*}\right] = \arg \left[\f{X'(s)}{\zeta}\right] - \arg \left[\f{z-z_*}{\zeta}\right]
= \g_1-\g_3,
\]
\eqref{eqn: estimate for cosine of the angle} follows immediately.
\end{step}

\begin{step}
With $\om \in \BT = [-\pi,\pi)$, let $l_\om :=\{z_* + te^{i\om}:\, t\geq 0\}$ be the ray emanating from $z_*$ with the directional angle $\om$.
Then that $X(\BT)\cap l = \{X(s_1),\, X(s_2)\}$ implies that $l_\om \cap X(\BT)$ contains exactly one element when $\om = \arg(X(s_j) - z_*)$ $(j = 1,2)$.
In addition, \eqref{eqn: estimate for cosine of the angle} implies that the number of element of $l_\om\cap X(\BT)$ changes continuously as $\om$ varies, and thus it must be identically $1$.
Therefore, $X(\BT)$ can be parameterized in the polar coordinate centered at $z_*$.
More precisely, there exists $\rho = \rho(\om)$ defined on $\BT $ such that $X(s) - z_* = \rho(\om(s))e^{i\om(s)}$, where $\om(s) = \arg(X(s)-z_*) \in \BT$.
Here $\om(\cdot )$ is bijective from $\BT$ to $\BT$.
Since $X(\cdot,t)$ is assumed to parameterize the curve $X(\BT,t)$ in the counter-clockwise direction, the map $\om(s)$ is orientation-preserving.
\eqref{eqn: estimate for cosine of the angle} implies that $|\rho'(\om)/\rho(\om)|\leq \tan \Phi_*$.
Moreover, \eqref{eqn: upper and lower bound for distance to the midpoint} gives \eqref{eqn: upper and lower bounds for rho}.
The estimate $R_X \geq \f{d_*}2 \tan (\f{\pi}{4}-\f{\Phi_*}{2})$ follows from the fact that the disk of radius $\f{d_*}2 \tan (\f{\pi}{4}-\f{\Phi_*}{2})$
centered at $z_*$ is contained in the interior of $X(\BT)$.
\eqref{eqn: largest distance from a point} follows from part \ref{property: diameter lower bound}
and part \ref{property: polar coordinate representation}. 

For arbitrary $s,s'\in \BT$,
\beqo
|X(s)-X(s')|\geq d_* \tan \left(\f{\pi}{4}-\f{\Phi_*}{2}\right) \left|\sin \f{\om(s)-\om(s')}{2}\right|.
\eeqo
Indeed, by the law of cosines, with $\r_\dag := \min\{\r(\om(s)),\r(\om(s'))\}$,
\begin{align*}
|X(s)-X(s')| = &\; \left[\r(\om(s))^2 + \r(\om(s'))^2 -2\r(\om(s))\r(\om(s'))\cos \big(\om(s)-\om(s')\big)\right]^{1/2}\\
\geq &\; \left[\r_\dag^2 + \r_\dag^2 -2\r_\dag^2\cos \big(\om(s)-\om(s')\big)\right]^{1/2}\\
\geq &\; d_* \tan \left(\f{\pi}{4}-\f{\Phi_*}{2}\right) \left|\sin \f{\om(s)-\om(s')}{2}\right|.
\end{align*}
On the other hand,
assume $\om (s')\in [\om(s),\om(s)+\pi]$ without loss of generality (otherwise, one may consider $\om(s)\in [\om(s'),\om(s')+\pi]$ instead).
By \eqref{eqn: upper and lower bounds for rho} and \eqref{eqn: estimate for cosine of the angle},
\begin{align*}
\cos \Phi_* L(s,s')
\leq &\; \cos \Phi_* \int_{\om(s)}^{\om(s')} \big(\r(\eta)^2+\r'(\eta)^2\big)^{1/2} \,d\eta\\
\leq &\; \int_{\om(s)}^{\om(s')} \r(\eta) \,d\eta
\leq \f{d_*}2 \tan \left(\f{\pi}{4}+\f{\Phi_*}{2}\right) |\om(s)-\om(s')|_\BT,
\end{align*}
where $|\om(s)-\om(s')|_\BT\in [0,\pi]$ denotes the distance between $\om(s)$ and $\om(s')$ along $\BT$.
Then \eqref{eqn: chord-arc condition} and \eqref{eqn: upper bound for arclength} follow.
Finally, \eqref{eqn: R_X d_* CL comparable} follows from parts \ref{property: diameter lower bound}-\ref{property: chord-arc} and the fact that
$\CL(t) = 2\sup_{s'}L(s,s')$ for arbitrary $s\in \BT$.
\end{step}
\end{proof}
\end{prop}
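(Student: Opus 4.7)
My plan is to organize the proof as five largely independent claims, with the polar parameterization \ref{property: polar coordinate representation} as the keystone from which \ref{property: chord-arc} and the comparability estimates \eqref{eqn: R_X d_* CL comparable} will follow almost automatically.

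For \ref{property: diameter lower bound}, I would just invoke the isodiametric inequality: the planar region enclosed by $X(\BT)$ has diameter at most $d_*$, so its area is at most $\pi d_*^2/4$, and comparing with $\pi R_X^2$ gives the claim. For part \ref{property: polar coordinate representation} I would pick $s_1, s_2\in\BT$ attaining the diameter, $d_* = |X(s_1)-X(s_2)|$, and first observe that by the maximality of $|X(s_1)-X(s_2)|^2$ the derivative $X'(s_j)$ is perpendicular to $X(s_1)-X(s_2)$ for $j=1,2$. Combined with $|\Phi(s_1,s_2)|<\pi/2$, the two unit tangents must be anti-parallel, so \eqref{eqn: X'(s_j) is perpendicular to X(s_1)-X(s_2)} holds (otherwise $\Phi(s_1,s_2)$ would equal $\pm\pi$). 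I then take $z_*=(X(s_1)+X(s_2))/2$. The first subclaim is that the line $l$ through $X(s_1), X(s_2)$ meets $X(\BT)$ only at these two points, which I would prove by contradiction: if a third point $X(s)\in l$, then the ratio $J(s_1,s)/J(s_2,s)$ is a positive real multiple of $X'(s_1)/X'(s_2)=-1$, forcing $\Phi(s_1,s)-\Phi(s_2,s)=\pi\pmod{2\pi}$, which violates $|\Phi|<\pi/2$.

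Next I would bound $|X(s)-z_*|$ by controlling the angle at $X(s)$ in the triangle $X(s_1)X(s)X(s_2)$: from $\Phi(s,s_1)+\Phi(s,s_2)=\arg X'(s_1)+\arg X'(s_2) - 2[\arg(X(s)-X(s_1))-\arg(X(s)-X(s_2))]+(\text{const})$ and the fact that $\arg X'(s_1)-\arg X'(s_2)=\pi$, one reads off that the angle $\arg(X(s)-X(s_1))-\arg(X(s)-X(s_2))$ lies in $[\pi/2-\Phi_*, \pi/2+\Phi_*]$ in absolute value. The law of cosines then sandwiches $|X(s)-X(s_1)|^2+|X(s)-X(s_2)|^2$ between $(1\pm\sin\Phi_*)^{-1}d_*^2$, and the parallelogram identity converts this into the two-sided bound \eqref{eqn: upper and lower bound for distance to the midpoint}. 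The hardest step, which I anticipate will require the most care, is the tangent-angle estimate \eqref{eqn: estimate for cosine of the angle}. My plan is to introduce the auxiliary holomorphic function $\psi(w)=\log\bigl((w-X(s_1))/(w-X(s_2))\bigr)$ on $\BC\setminus l$, set $\zeta=\overline{\psi'(X(s))}$ (which gives the tangent to the level set $\{\Im\psi=\text{const}\}$ passing through $X(s)$), and expand $\Phi(s,s_j)$ as $\arg[X'(s)/\zeta]+\arg[X'(s_j)/(X(s_1)-X(s_2))]\mp\Im\psi(X(s))$. Taking real and imaginary parts of these constraints with $|\sin\Phi(s,s_j)|\leq\sin\Phi_*$ will yield inequalities of the form $|\cos\gamma_1\cos\gamma_2|+|\sin\gamma_1\sin\gamma_2|\leq\sin\Phi_*$ for angles $\gamma_1, \gamma_2$ attached to $X'(s)/\zeta$ and $\Im\psi(X(s))$. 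A direct computation of $\arg[(X(s)-z_*)/\zeta]$ via $e^{-\psi}-e^{\psi}$ shows that replacing $\gamma_2$ by the analogous angle $\gamma_3$ associated with $X(s)-z_*$ only strengthens this inequality, because $|\cos\gamma_3|\le|\cos\gamma_2|$. The desired bound on $\arg[X'(s)/(X(s)-z_*)]=\gamma_1-\gamma_3$ follows.

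Once \eqref{eqn: estimate for cosine of the angle} is in hand, a continuity-of-intersection-number argument along rays $l_\omega=\{z_*+te^{i\omega}:t\ge 0\}$ gives that each ray meets $X(\BT)$ in exactly one point, yielding the polar parameterization with the stated bound $|\rho'/\rho|\leq\tan\Phi_*$ and \eqref{eqn: upper and lower bounds for rho}. The lower bound $R_X\ge\tfrac{d_*}{2}\tan(\tfrac\pi4-\tfrac{\Phi_*}{2})$ then follows because the disc of that radius around $z_*$ sits inside $X(\BT)$. For part (iii) I combine \eqref{eqn: upper and lower bounds for rho} with \ref{property: diameter lower bound} to bound $d(s)=\sup_{s'}|X(s)-X(s')|$ from below by $2\rho_{\min}$. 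For the chord-arc estimate \ref{property: chord-arc}, I would use the law of cosines in polar form to get $|X(s)-X(s')|\gtrsim \rho_{\min}|\sin\tfrac{\omega(s)-\omega(s')}{2}|$, and pair this with the arclength bound $L(s,s')\leq\int|X'|\,ds''$ reparameterized through $\omega$, using $(\rho^2+\rho'^2)^{1/2}\leq\rho/\cos\Phi_*$ from the angle bound, to convert $|\omega(s)-\omega(s')|_\BT$ into $L(s,s')/d_*$ up to the factor $1-\sin\Phi_*$. Finally, \eqref{eqn: R_X d_* CL comparable} is immediate from the previous parts together with $\CL(t)=2\sup_{s'}L(s,s')$ for any $s\in\BT$.
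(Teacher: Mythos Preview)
Your proposal is correct and follows essentially the same approach as the paper's proof, down to the specific choice of $z_*$ as the midpoint of a diameter, the auxiliary function $\psi(w)=\log\bigl((w-X(s_1))/(w-X(s_2))\bigr)$, the angle comparison via $\gamma_1,\gamma_2,\gamma_3$ and the identity for $e^{-\psi}-e^{\psi}$, and the polar-coordinate derivation of the chord-arc bound. One small slip: in your derivation of the triangle angle at $X(s)$ it is the \emph{difference} $\Phi(s,s_2)-\Phi(s,s_1)$ (not the sum) that isolates $\arg(X(s)-X(s_1))-\arg(X(s)-X(s_2))$, and in Step~2 you will also need the sign information coming from $\cos\Phi(s,s_j)>0$ (not just $|\sin\Phi(s,s_j)|\le\sin\Phi_*$) to push the inequality from $\gamma_2$ to $\gamma_3$.
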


If we assume $\Phi_*(0)<\pi/4$, Proposition \ref{prop: geometric property of the curve} implies that $X_0(\BT)$ can have at most a medium-size deviation from being a perfect circle.
The following remark may be viewed a converse statement in some sense, which says that if $X_0(s)$ is $O(1)$-close to an equilibrium in the $C^1(\BT)$-seminorm up to suitable reparameterizations, then the corresponding $\Phi_0$ will satisfy $|\Phi_0|<\pi/4$.

\begin{rmk}
\label{rmk: initial data C1 close to equilibrium}
There exists a universal constant $c>0$, such that, if $X_0\in C^1(\BT)$ satisfies that
\[
\big\|X_0(s)-(x_0+R e^{i(s+\xi_0)})\big\|_{\dot{C}^1(\BT)} \leq cR
\]
for some $x_0\in \BC$, $R>0$, and $\xi_0\in \BT$, then $\Phi_0(s,s')$ defined in terms of $X_0$ will satisfy $|\Phi_0(s,s')|<\pi/4$ for all $s,s'\in \BT$.
Here $R$ does not have to coincide with $R_X$.
More generally, if we additionally let $\psi:\BT\to \BT$ be an arbitrary suitably smooth bijective diffeomorphism, and define $\tilde{\Phi}_0(s,s')$ in terms of the reparameterized configuration $\tilde{X}_0 = X_0\circ \psi$,
then $|\tilde{\Phi}_0(s,s')|<\pi/4$ still holds.

Indeed, the second claim follows from the first one and the fact that $\tilde{\Phi}_0(s,s') = \Phi_0(\psi(s),\psi(s'))$ for all $s,s'\in \BT$.

To show the first claim, we denote $Y_0 (s) := x_0+ R e^{i(s+\xi_0)}$.
Then $|Y_0'(s)|\equiv R$, and for distinct $s, s'\in \BT$,
\[
\arg \left[\f{Y_0'(s)Y_0'(s')}{(Y_0(s)-Y_0(s'))^2}\right] \equiv 0.
\]
Assume $c\leq 1/2$.
Then the facts $|X_0'(s)-Y_0'(s)|\leq cR$ and $|Y_0'(s)| = R$ imply that
\[
\big|\arg \big(X_0'(s)/Y_0'(s)\big) \big| \leq \arcsin c.
\]
For distinct $s,s'\in \BT$, we assume $s<s'\leq s+\pi$ without loss of generality.
Then
\[
\left|\f{Y_0(s')-Y_0(s)}{s'-s}\right|
= \f{\big|2R \sin(\f{s'-s}{2})\big|}{|s'-s|_{\BT}} \geq \f{2}{\pi}R,
\]
and
\[
\left|\f{X_0(s')-X_0(s)}{s'-s} - \f{Y_0(s')-Y_0(s)}{s'-s}\right|
\leq \f{1}{|s'-s|_\BT}
\int_{s}^{s'}\big|X_0'(s'')-Y_0'(s'')\big|\,ds'' \leq c R.
\]
Hence,
\[
\left|\arg \left[\f{Y_0(s')-Y_0(s)}
{X_0(s')-X_0(s)}\right]\right|
=\left| \arg \left[\f{\f{1}{s'-s}(Y_0(s')-Y_0(s))}
{\f{1}{s'-s}(X_0(s')-X_0(s))}\right]\right|
\leq \arcsin \f{\pi c}{2},
\]
which implies
\[
|\Phi_0(s,s') |
=
\left|\arg \left[\f{X_0'(s)X_0'(s')}{Y_0'(s)Y_0'(s')}
\left(\f{Y_0(s')-Y_0(s)}
{X_0(s')-X_0(s)}\right)^{2}\right]\right|
\leq 2\arcsin c + 2\arcsin \f{\pi c}{2}.
\]
Assuming $c$ to be suitably small but universal, we can achieve that $|\Phi_0(s,s')|<\pi/4$.
\end{rmk}

\subsection{Strings with a circular shape}

In Proposition \ref{prop: bound for the max |Phi|}, we show that
if the initial curve $X_0(\BT)$ is a circle of radius $R_X$ (see \eqref{eqn: effective radius}), then for all $t>0$ at which the solution $X(s,t)$ is well-defined, $X(\BT,t)$ should still be a circle of radius $R_X$, possibly centered at a different point.
Given that, it seems feasible to pursue a more precise characterization of the solution starting from such initial data with a circular shape.
We thus make the following derivation.

Under the assumption that $\tilde{X}(s,t)\neq 0$ for all $(s,t)$, we consider the following normalized problem (we still use the notation of complex numbers):
\beq
\pa_t \tilde{X}(s) = \f{1}{8\pi}\pv
\int_{\BT} \left[\frac{\tilde{X}'(s')^2}{(\tilde{X}(s')-\tilde{X}(s))^2}+
\frac{\overline{\tilde{X}'(s')}^2}{(\overline{\tilde{X}(s')}-\overline{\tilde{X}(s)})^2}\right]\big(\tilde{X}(s')-\tilde{X}(s)\big)\,ds'
- v(t),
\label{eq:2}
\eeq
with
\beq
v(t)= \f{1}{8\pi}\int_{\BT}\f{\tilde{X}'(s')^2}{\tilde{X}(s')}\,ds'.
\label{eq:3}
\eeq
It is clear that, if $(\tilde{X},v)$ solves \eqref{eq:2}-\eqref{eq:3}, then $\tilde{X}(s)+\int_0^t v(\tau)\,d\tau$ satisfies
\eqref{eqn: contour dynamic equation complex form}.
In addition, we find $\tilde{X}$ enjoys the following property.

\begin{lem}
Suppose $\tilde{X}_0(s)$ satisfies that $\tilde{X}_0(\BT)$ is a circle of radius $R_X$ centered at the origin, and $|\tilde{X}_0'(s)|>0$.
Let $\tilde{X}(s,t)$ be a solution to \eqref{eq:2}-\eqref{eq:3} in $\BT\times [0,T]$ corresponding to the initial condition $\tilde{X}(s,0) = \tilde{X}_0(s)$, satisfying the assumptions \ref{assumption: geometry}-\ref{assumption: non-degeneracy} with $X$ there replaced by $\tilde{X}$
and that 
$|\tilde{X}(s,t)|>0$.
%
Then $\tilde{X}(\BT,t)$ is the circle of radius $R_X$ centered at the origin for all $t\in [0,T]$.

\begin{rmk}
In view of this, $X(\BT,t)$ should be a circle of all time, and $v(t)$ in \eqref{eq:3} can be interpreted as the velocity of its center.
\end{rmk}

\begin{proof}
Thanks to the assumptions on $\tilde{X}$, it suffices to prove $|\tilde{X}(s,t)|\equiv R_X$ for all $(s,t)\in \BT\times [0,T]$.
Plugging \eqref{eq:3} into \eqref{eq:2} yields that
\beq
\label{eq:4}
\pa_t \tilde{X}(s)=\f{1}{8\pi}\pv \int_{\BT}
\left[\f{\tilde{X}'(s')^2\tilde{X}(s)}{(\tilde{X}(s')-\tilde{X}(s))\tilde{X}(s')} +\overline{\tilde{X}'(s')}^2\cdot \f{\tilde{X}(s')-\tilde{X}(s)}{(\overline{\tilde{X}(s')}-\overline{\tilde{X}(s)})^2}\right]ds'.
\eeq
Since
\[
\f{\tilde{X}(s')-\tilde{X}(s)} {(\overline{\tilde{X}(s')}-\overline{\tilde{X}(s)})^2}
=-\f{\tilde{X}(s)}{(\overline{\tilde{X}(s')}-\overline{\tilde{X}(s)})\overline{\tilde{X}(s')}}
+\f{|\tilde{X}(s')|^2-|\tilde{X}(s)|^2}{(\overline{\tilde{X}(s')}-\overline{\tilde{X}(s)})^2\overline{\tilde{X}(s')}},
\]
\eqref{eq:4} becomes
\beq
\label{eq:5}
\begin{split}
\f{\pa_t \tilde{X}(s)}{\tilde{X}(s)}
= &\; \f{1}{8\pi}\pv \int_{\BT}\left[\frac{\tilde{X}'(s')^2}{(\tilde{X}(s')-\tilde{X}(s))\tilde{X}(s')}-
\frac{\overline{\tilde{X}'(s')}^2}{(\overline{\tilde{X}(s')}-\overline{\tilde{X}(s)})\overline{\tilde{X}(s')}} \right]ds'\\
&\; +\f{1}{8\pi}\pv \int_{\BT} \overline{\tilde{X}'(s')}^2\cdot \f{|\tilde{X}(s')|^2-|\tilde{X}(s)|^2}{(\overline{X(s')}-\overline{X(s)})^2\overline{\tilde{X}(s')}\tilde{X}(s)}\, ds'.
\end{split}
\eeq
Hence,
\beq
\begin{split}
\f{\pa_t|\tilde{X}(s)|}{|\tilde{X}(s)|}
= &\; \Re\left[\f{\pa_t \tilde{X}(s)}{\tilde{X}(s)}\right]
=\f{1}{8\pi}
\pv\int_{\BT}\Re\left[\f{\overline{\tilde{X}'(s')}^2(|\tilde{X}(s')|^2-|\tilde{X}(s)|^2)}
{(\overline{\tilde{X}(s')}-\overline{\tilde{X}(s)})^2\overline{\tilde{X}(s')}\tilde{X}(s)}\right]ds'\\
= &\; \f{1}{8\pi}
\pv\int_{\BT}
\Re\left[\f{\tilde{X}'(s')^2\tilde{X}(s)}{(\tilde{X}(s')-\tilde{X}(s))^2\tilde{X}(s')}\right] \left(\f{|\tilde{X}(s')|^2}{|\tilde{X}(s)|^2} - 1\right) ds'.
\end{split}
\label{eqn: eqn for ln |X|}
\eeq

Define for $s\neq s'$ and $t\in [0,T]$ that
\[
B(s,s',t) :=
\Re\left[\f{\tilde{X}'(s',t)^2\tilde{X}(s,t)} {(\tilde{X}(s',t)-\tilde{X}(s,t))^2\tilde{X}(s',t)}\right].
\]
Let
\[
T_* : = \sup\left\{t\in [0,T]:\,
B(s,s',\tau) \geq 0 \mbox{ for all }s\neq s'\mbox{ and }\tau\in [0,t)\right\}.
\]
We claim that $T_*>0$.
Suppose not.
Then there exists a sequence $\{(s_k, s_k', t_k)\}_{k = 1}^\infty$, such that $t_k\to 0^+$ as $k\to +\infty$, $s_k\neq s_k'$, and $B(s_k,s_k',t_k)<0$.
Using the smoothness assumptions on $\tilde{X}$,
one can justify by Taylor expansion that there exists $\d>0$ depending on $\tilde{X}$, such that $B(s,s',t)\geq 0$ whenever $t\in [0,\d]$ and $|s-s'|_\BT\leq \d$.
Hence, up to a subsequence, we may assume that for some $s,s'\in \BT$, $s_k \to s$ and $s_k'\to s'$ as $k\to +\infty$, where $|s-s'|_\BT\geq \d$.
By the continuity of $\tilde{X}$,
\begin{align*}
0\geq \lim_{k\to +\infty} B(s_k,s_k',t_k) = B(s,s',0)
= &\; \Re\left[\f{\tilde{X}_0'(s')\tilde{X}_0'(s)}{(\tilde{X}_0(s')-\tilde{X}_0(s))^2}
\cdot \f{\tilde{X}_0'(s')/\tilde{X}_0(s')}{\tilde{X}_0'(s)/\tilde{X}_0(s)}
\right]\\
= &\; \f{|\tilde{X}_0'(s')||\tilde{X}_0'(s)|}{|\tilde{X}_0(s')-\tilde{X}_0(s)|^2}
\cdot \f{|\tilde{X}_0'(s')|/|\tilde{X}_0(s')|}{|\tilde{X}_0'(s)|/|\tilde{X}_0(s)|}
 >0.
\end{align*}
In the last line, we used the assumptions on $\tilde{X}_0$.
This leads to a contradiction, so $T_*>0$.

Then for $t\in [0,T_*)$, we may use \eqref{eqn: eqn for ln |X|} and proceed as in the proof of Proposition \ref{prop: bound for the max |Phi|} to show that $\max_s|\tilde{X}(s,t)|$ does not increase in $t$, and $\min_s|\tilde{X}(s,t)|$ does not decrease in $t$.
That implies $|\tilde{X}(s,t)|\equiv R_X$ for all $s\in \BT$ and $t\in [0,T_*)$.
By the continuity of $\tilde{X}$, $|\tilde{X}(\cdot,T_*)|\equiv R_X$.
If $T_* < T$, viewing $T_*$ as the new initial time, we argue as before to find that $B(s,s',t)$ should stay non-negative beyond $T_*$ at least for some time.
This contradicts with the definition of $T_*$.
Therefore, $T_* = T$ and we complete the proof.
\end{proof}
\end{lem}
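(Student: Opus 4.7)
The plan is to reduce the claim to showing that the modulus $|\tilde{X}(s,t)|$ stays identically equal to $R_X$, and then to prove this via a maximum principle argument on a kernel that remains non-negative as long as the circular shape is preserved, bootstrapping via continuity.

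First I would substitute \eqref{eq:3} into \eqref{eq:2} to obtain an expression of $\pa_t \tilde{X}(s)$ in which the two singular integrands combine through a partial fractions identity. The crucial algebraic identity is
\[
\f{\tilde{X}(s')-\tilde{X}(s)} {(\overline{\tilde{X}(s')}-\overline{\tilde{X}(s)})^2}
=-\f{\tilde{X}(s)}{(\overline{\tilde{X}(s')}-\overline{\tilde{X}(s)})\overline{\tilde{X}(s')}}
+\f{|\tilde{X}(s')|^2-|\tilde{X}(s)|^2}{(\overline{\tilde{X}(s')}-\overline{\tilde{X}(s)})^2\overline{\tilde{X}(s')}},
\]
which isolates a factor of $|\tilde{X}(s')|^2-|\tilde{X}(s)|^2$. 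Dividing by $\tilde{X}(s)$ and taking real parts, one gets a clean evolution equation of the form
\[
\f{\pa_t|\tilde{X}(s)|}{|\tilde{X}(s)|} = \f{1}{8\pi}\pv\int_\BT B(s,s',t)\left(\f{|\tilde{X}(s')|^2}{|\tilde{X}(s)|^2}-1\right)ds',
\]
where $B(s,s',t) := \Re\bigl[\tilde{X}'(s')^2\tilde{X}(s)/((\tilde{X}(s')-\tilde{X}(s))^2\tilde{X}(s'))\bigr]$.

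Second, I would verify that the kernel $B$ is pointwise non-negative at $t=0$. Since $\tilde{X}_0(\BT)$ is the circle of radius $R_X$ about the origin, we may write $\tilde{X}_0'(s)/\tilde{X}_0(s)$ as a purely imaginary multiple, which forces $\tilde{X}_0'(s)^2\tilde{X}_0(s)/\tilde{X}_0(s')$ to have the same argument as $\tilde{X}_0'(s)\tilde{X}_0'(s')$; then $B(s,s',0)$ is essentially $|\tilde{X}_0'(s')||\tilde{X}_0'(s)|/|\tilde{X}_0(s')-\tilde{X}_0(s)|^2$ up to a positive factor. Combined with a Taylor expansion to handle $s'$ near $s$, this shows $B(s,s',0)>0$ for all distinct $s,s'\in\BT$.

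Third, I would define $T_*$ as the supremum of $t\in[0,T]$ such that $B(\cdot,\cdot,\tau)\geq 0$ on $\{s\neq s'\}$ for every $\tau\in[0,t)$, and establish $T_*>0$ by a compactness argument: if $T_*=0$, a sequence of counterexamples must accumulate away from the diagonal (since the Taylor expansion gives uniform positivity of $B$ near the diagonal on a short time interval), and the continuity of $\tilde{X}$ forces the limit to violate the $t=0$ positivity established above. On the interval $[0,T_*)$, the displayed evolution equation admits a standard maximum/minimum principle argument (as in the proof of Proposition~\ref{prop: bound for the max |Phi|}): at any point where $|\tilde{X}(\cdot,t)|$ attains its maximum, the integrand is non-positive, so $\max_s|\tilde{X}(s,t)|$ is non-increasing, and symmetrically $\min_s|\tilde{X}(s,t)|$ is non-decreasing. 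Since both equal $R_X$ at $t=0$, we conclude $|\tilde{X}(s,t)|\equiv R_X$ on $\BT\times[0,T_*)$.

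Finally, continuity gives $|\tilde{X}(\cdot,T_*)|\equiv R_X$, so $\tilde{X}(\BT,T_*)$ is again a circle of radius $R_X$ centered at the origin and the same argument applied at $T_*$ as a new initial time extends the non-negativity of $B$ strictly beyond $T_*$, contradicting the definition of $T_*$ unless $T_*=T$. The main obstacle is step three: ensuring that the kernel $B$ stays non-negative long enough to close the maximum principle loop, which must be handled by separating near-diagonal behavior (controlled by the smoothness of $\tilde{X}$ and Taylor expansion) from behavior away from the diagonal (controlled by the strict initial positivity and continuity in $t$). Everything else is a straightforward maximum principle plus a bootstrap in $T_*$.
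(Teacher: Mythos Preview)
Your proposal is correct and follows essentially the same approach as the paper: derive the evolution equation for $|\tilde{X}|$ via the same partial fractions identity, define the same kernel $B$ and threshold time $T_*$, verify $B(\cdot,\cdot,0)>0$ via the circular structure of $\tilde{X}_0$, establish $T_*>0$ by the same near-diagonal/off-diagonal compactness split, run the maximum/minimum principle on $[0,T_*)$, and bootstrap. The paper carries out exactly these steps with the same identities and the same contradiction argument.
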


Since $|\tilde{X}(s)|\equiv R_X$, we can write $\tilde{X}(s)=R_X e^{i\theta(s)}$ with slight abuse of notation, where $\theta(s)\in\BR$.
Then \eqref{eq:5} implies that
\beq
\begin{split}
i\pa_t\theta(s) & =\f{\pa_t \tilde{X}(s)}{\tilde{X}(s)}
= \f{1}{8\pi}\pv \int_{\BT} \left[\frac{\tilde{X}'(s')^2}{(\tilde{X}(s')-\tilde{X}(s))\tilde{X}(s')}-
\frac{\overline{\tilde{X}'(s')}^2}{(\overline{\tilde{X}(s')}-\overline{\tilde{X}(s)})\overline{\tilde{X}(s')}}\right]ds'\\
& =\f{i}{8\pi}\pv\int_{\T}\theta'(s')^2\cot\frac{\theta(s')-\theta(s)}{2}\, ds'\\
& =\f{i}{4\pi}\pv\int_{\R}\frac{\theta'(s')^2}{\theta(s')-\theta(s)}\,ds'.
\end{split}
\label{eqn: theta equation}
\eeq
In the last equality, we extended $\th = \th(s')$ to the entire real line such that $\theta(s'+2\pi)=\theta(s')+2\pi$ for all $s'\in \BR$.
If $\th$ is strictly increasing and suitably smooth in $\BR$, the $\th$-equation is equivalent to the tangential Peskin problem in 2-D \cite[Section 2.2]{Tong2022GlobalST}.
Indeed, \eqref{eqn: theta equation} is exactly the tangential Peskin problem in the Lagrangian coordinate.
If we define $f = f(x,t)$ for $x\in \BR$ and $t>0$ such that $f(\theta(s,t),t)=\theta'(s,t)$, then $f(\cdot,t)$ is $2\pi$-periodic on $\BR$, and $f$ solves
\[
\pa_t f=\f{1}{4}\big(\CH f\cdot \pa_x f-f\cdot \pa_x\CH f\big),
\]
where
\[
\CH f(x) := \f{1}{\pi}\pv\int_{\BR}\f{f(y)}{x-y}\,dy =\f{1}{2\pi}\pv\int_{\BT}f(y)\cot\f{x-y}{2}\,dy.
\]
This is the tangential Peskin problem in the Eulerian coordinate.

Therefore, we can apply the results in \cite[Corollary 2.1]{Tong2022GlobalST} to obtain a global solution to \eqref{eqn: contour dynamic equation complex form} starting from initial data that has a circular shape but is not necessarily in equilibrium.

\begin{thm}
\label{thm: circular string}
Assume $X_0\in H^1(\BT)$, such that the curve $X_0(\BT)$ is a circle of radius $R_X$ centered at $x_0\in \BC$.
Suppose that, for a strictly increasing continuous function $\th_0:\BR\to \BR$ which satisfies $\th_0(s+2\pi) = \th_0(s) + 2\pi$ for all $s\in \BR$, it holds that $X_0(s) = x_0+ R_X e^{i\th_0(s)}$ in the notation of complex numbers.
Also assume that the inverse function of $\th_0$ on $[0,2\pi]$ is absolutely continuous.
Then \eqref{eqn: contour dynamic equation complex form} and \eqref{eqn: initial condition} admit a solution $X = X(s,t)$ in $\BT \times [0,+\infty)$ in the following sense:
\begin{enumerate}[label = (\roman*)]
\item $X(s,t)$ is a classic solution to \eqref{eqn: contour dynamic equation complex form} in $\BT\times (0,+\infty)$, i.e., \eqref{eqn: contour dynamic equation complex form} holds pointwise in $\BT\times (0,+\infty)$.

\item $X(\cdot,t)$ converges uniformly to $X_0(\cdot)$ as $t\to 0^+$.
\end{enumerate}

More precisely, the solution is constructed as follows.
Let $\th = \th(s,t)$ be a solution to (cf.\;\eqref{eqn: theta equation})
\[
\pa_t \th(s,t) = -\f{1}{4\pi} \pv \int_{\BR}\f{(\pa_{s'}\th(s',t))^2}{\th(s,t)-\th(s',t)}\,ds',\quad \th(s,0) = \th_0(s)
\]
in $\BR\times [0,+\infty)$, which is defined in Corollary 2.1 of \cite{Tong2022GlobalST} (with $X$ and $X_0$ there replaced by $\th$ and $\th_0$, respectively).
Let (cf.\;\eqref{eq:3})
\beq
v(t):= -\f{R_X}{8\pi}
\int_\BT e^{i\th(s',t)} \big(\pa_{s'}\th(s',t)\big)^2 \,ds'.
\label{eqn: velocity of the center}
\eeq
Then
\beq
X(s,t) = x(t) + R_X e^{i\th(s,t)} \mbox{ with } x(t): = x_0+\int_0^t v(\tau) \,d\tau
\label{eqn: def the solution X}
\eeq
gives the desired solution to \eqref{eqn: contour dynamic equation complex form} and \eqref{eqn: initial condition}.
It has the following properties:
\begin{enumerate}[label = (\arabic*)]
\item For any $t\geq 0$, the curve $X(\BT,t)$ is a circle of radius $R_X$ centered at $x(t)$.
\item 
$X(s,t)$ is smooth in $\BT\times (0,+\infty)$.
For any $\al\in (0,\f12)$,
$X(s,t) \in C^\al(\BT\times [0,+\infty))$.

\item

    $\|X(\cdot,t)\|_{\dot{H}^1(\BT)} = R_X \|\th(\cdot,t)\|_{\dot{H}^1(\BT)}$ is non-increasing in $t\in [0,+\infty)$.

\item For any $t> 0$, $|X'(\cdot,t)|$ has a positive lower bound.
As a result, $X(\cdot,t)$ satisfies the well-stretched condition, i.e., for some $\lambda = \lambda(t)>0$
\[
|X(s_1,t)-X(s_2,t)|\geq \lambda(t)|s_1-s_2|_\BT\quad \forall\, s_1,s_2 \in \BT.
\]
In fact, we may choose $\lam(t)$ to be strictly increasing in $t$.

\item
There exist $x_\infty\in\BC$ and $\xi_\infty\in\BT$ such that, as $t\to +\infty$, $X(\cdot,t)$ converges uniformly to $X_\infty(s): = x_\infty + R_X e^{i(s+\xi_\infty)}$ with an exponential rate.
The exponential convergence also holds in $H^k(\BT)$-norms with arbitrary $k\in \BN$.
\end{enumerate}

\begin{proof}
According to Corollary 2.1 of \cite{Tong2022GlobalST}, $\th(s,t)$ constructed here is smooth in $\BR\times (0,+\infty)$.
Besides, $\|\th(\cdot,t)\|_{\dot{H}^1(\BT)}$ is uniformly bounded for all $t$, and $\|\th'(\cdot,t)-1\|_{L^2(\BT)}$ decays to $0$ exponentially as $t\to +\infty$.
Hence, by \eqref{eqn: velocity of the center}, $v(t)$ is smooth in $(0,+\infty)$, and
\[
|v(t)|= \f{R_X}{8\pi} \left|\int_\BT e^{i\th(s',t)} \pa_{s'}\th(s',t) \big(\pa_{s'}\th(s',t)-1\big)\, ds'\right|
\leq \f{R_X}{8\pi}\|\th\|_{\dot{H}^1(\BT)}\|\th'-1\|_{L^2(\BT)}.
\]
This implies that $v(t)$ is uniformly bounded and decays exponentially as $t\to +\infty$, so $x(t)$ defined by \eqref{eqn: def the solution X} converges to some $x_\infty \in \BC$ exponentially.

These facts together with the properties of $\th(s,t)$ established in \cite[Corollary 2.1]{Tong2022GlobalST} imply the desired claims.
\end{proof}
\end{thm}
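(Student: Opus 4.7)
The plan is to construct the solution explicitly via the ansatz \eqref{eqn: def the solution X} and then verify, one property at a time, that it does what is claimed, using as the main engine the global solution $\th(s,t)$ to the tangential Peskin problem furnished by \cite[Corollary~2.1]{Tong2022GlobalST}. Concretely, I would first invoke that corollary to produce $\th(s,t)$ on $\BR\times[0,+\infty)$ with the following properties that will be used repeatedly: $\th$ is smooth on $\BR\times(0,+\infty)$; $s\mapsto \th(s,t)$ is strictly increasing with $\th(s+2\pi,t)-\th(s,t)\equiv 2\pi$; $\|\th(\cdot,t)\|_{\dot H^1(\BT)}$ is non-increasing in $t$; $\|\pa_s\th(\cdot,t)-1\|_{L^2(\BT)}$ decays exponentially; and $\th(\cdot,t)$ converges uniformly to a translate of the identity with exponential rate. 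Given these, $v(t)$ in \eqref{eqn: velocity of the center} is manifestly smooth on $(0,+\infty)$, and the bound
\[
|v(t)|\leq \f{R_X}{8\pi}\|\th(\cdot,t)\|_{\dot H^1(\BT)}\|\pa_s\th(\cdot,t)-1\|_{L^2(\BT)}
\]
(obtained by pulling the mean of $\pa_{s'}\th$ outside using $\int_\BT e^{i\th}\pa_{s'}\th\,ds'=0$) shows $v$ is bounded and exponentially decaying, so $x(t)=x_0+\int_0^t v\,d\tau$ converges exponentially to some $x_\infty\in\BC$.

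Next, the central task is to verify that $X(s,t):=x(t)+R_Xe^{i\th(s,t)}$ solves \eqref{eqn: contour dynamic equation complex form} pointwise in $\BT\times(0,+\infty)$. The idea is to reverse the derivation that produced \eqref{eqn: theta equation} from \eqref{eqn: contour dynamic equation complex form} in the preceding subsection. Concretely, one checks by direct substitution that, for any $(s,t)$ with $X(\BT,t)$ equal to the circle of radius $R_X$ centered at $x(t)$, the right-hand side of \eqref{eqn: contour dynamic equation complex form} splits as $v(t)+iR_Xe^{i\th(s,t)}\pa_t\th(s,t)$: the first piece comes from the ``zeroth moment'' produced by the computation around \eqref{eq:3}--\eqref{eq:4}, and the second piece is precisely what \eqref{eqn: theta equation} forces once one recognizes
\[
\f{1}{8\pi}\pv\int_\BT \th'(s')^2\cot\tfrac{\th(s')-\th(s)}{2}\,ds'=\f{1}{4\pi}\pv\int_\BR \f{\th'(s')^2}{\th(s')-\th(s)}\,ds'
\]
via the standard cotangent-sum trick after the $2\pi$-periodic extension. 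The left-hand side is $\pa_tX(s,t)=v(t)+iR_Xe^{i\th(s,t)}\pa_t\th(s,t)$, so the two agree. The \emph{main obstacle} here is to justify the principal value manipulations rigorously: smoothness of $\th$ on positive times lets one differentiate under the integral and use cancellations near the diagonal, but the identities $|\tilde X|\equiv R_X$ used to split the complex quotient (as in the passage from \eqref{eq:4} to \eqref{eq:5}) must be applied in reverse, which requires tracking real and imaginary parts carefully and invoking Lemma \ref{lem: some singular integrals along the string curve} to handle terms that individually have only logarithmic cancellation.

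The remaining items are then largely corollaries. Property (1) is built into the ansatz. Property (2) follows because $\th$ is smooth on $\BT\times(0,+\infty)$ and $x(t)$ is smooth on $(0,+\infty)$; the $C^\al$ regularity up to $t=0$ for $\al\in(0,1/2)$ is inherited from the corresponding statement for $\th$ in \cite[Corollary~2.1]{Tong2022GlobalST} together with the $1/2$-Hölder bound on $x(t)=x_0+\int_0^t v\,d\tau$ coming from $v\in L^\infty$. For property (3), a direct computation gives $|X'(s,t)|=R_X|\th'(s,t)|$, so $\|X(\cdot,t)\|_{\dot H^1}=R_X\|\th(\cdot,t)\|_{\dot H^1}$, and monotonicity is inherited. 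Property (4) follows from the strict monotonicity and positive lower bound of $\pa_s\th(\cdot,t)$ for $t>0$ (again from \cite{Tong2022GlobalST}) combined with the elementary inequality $|e^{i\th(s_1)}-e^{i\th(s_2)}|\geq \f{2}{\pi}|\th(s_1)-\th(s_2)|_\BT$; strict monotonicity of $\lam(t)$ is transferred from the analogous property of $\th$. For property (5), combine exponential convergence of $x(t)$ to $x_\infty$ with exponential convergence of $\th(\cdot,t)$ to $s+\xi_\infty$ in all $H^k(\BT)$-norms to obtain exponential convergence of $X(\cdot,t)$ to $X_\infty(s)=x_\infty+R_Xe^{i(s+\xi_\infty)}$. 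Finally, $X(\cdot,t)\to X_0$ uniformly as $t\to 0^+$ follows from the corresponding statement for $\th$ (and continuity of $x(t)$ at $0$).
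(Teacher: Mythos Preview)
Your proposal is correct and follows essentially the same approach as the paper: both treat \cite[Corollary~2.1]{Tong2022GlobalST} as the engine providing all the needed properties of $\th$, bound $v(t)$ via the same trick $\int_\BT e^{i\th}\th'\,ds'=0$, and then transfer the regularity, monotonicity, well-stretched, and convergence statements from $\th$ to $X$. The paper's written proof is extremely terse (two short paragraphs), relying implicitly on the derivation \eqref{eq:2}--\eqref{eqn: theta equation} just above it for the verification that $X$ solves \eqref{eqn: contour dynamic equation complex form}; your proposal simply makes that reversal and the individual properties (1)--(5) explicit, which is a reasonable expansion rather than a different route.
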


\section{The Curvature $\ka(s)$}
\label{sec: curvature}
Let $\ka(s,t)$ denote the curvature of the curve $X(\BT,t)$ at the point $X(s,t)$.
It is given by
\beq
\kappa(s)=\f{\Im[\overline{X'(s)}X''(s)]}{|X'(s)|^3}
=\f{\Im[X''(s)/X'(s)]}{|X'(s)|}.
\label{eqn: def of curvature s variable}
\eeq
The sign convention of the curvature is that, if $X(\BT)$ is a circle, $\ka(s)$ is positive.
By the assumptions \ref{assumption: geometry}-\ref{assumption: non-degeneracy} on $X$, $\ka(s,t)\in C^1(\BT\times [0,T])$.
In this section, we want to establish an extremum principle and a decay estimate for $\ka(s,t)$ under the condition that $\Phi_*(0) < \pi/4$.
The main result of this section is Proposition \ref{prop: max principle and decay estimate for curvature}.

We start from deriving the equation for $\ka(s)$.

\begin{lem}
\label{lem: equation for the curvature}
The curvature $\ka(s,t)$ satisfies
\beq
\partial_t\kappa(s) = \frac{3}{2\pi}\pv\int_{\T}
\frac{|X'(s')|^2 \cos 2\Phi(s',s) }{|X(s')-X(s)|^2} \left[\Im \f{I(s,s')}{|X'(s)|} -\f12 \kappa(s)\right] ds'.
\label{eqn: eqn for curvature}
\eeq
\begin{proof}
By definition,
\[
|X'(s)|\partial_t\kappa(s)
=\partial_t\mathrm{Im}\f{X''(s)}{X'(s)}-\kappa(s)\partial_t|X'(s)|.
\]
We differentiate \eqref{eqn: equation for alpha differentiable} to obtain that
\begin{align*}
&\;\partial_t\mathrm{Im}\f{X''(s)}{X'(s)}
= \partial_s\Im\frac{\partial_tX'(s)}{X'(s)}\\
=&\;
\frac{1}{2\pi}\pv\int_{\T}\bigg\{\Re\left[\frac{3X'(s')^2X'(s)^2}{(X(s')-X(s))^4}\right]
\Im\left[\frac{X(s')-X(s)}{X'(s)}\right]\\
&\;\qquad +\Re\left[\frac{X'(s')^2X'(s)}{(X(s')-X(s))^3}\cdot \f{X''(s)}{X'(s)}\right]
\Im\left[\frac{X(s')-X(s)}{X'(s)}\right]\\
&\;\qquad -\Re\left[\frac{X'(s')^2X'(s)}{(X(s')-X(s))^3}\right]
\Im\left[\frac{X(s')-X(s)}{X'(s)}\cdot \f{X''(s)}{X'(s)}\right] -\frac{1}{2}\Im\left[\f{X'(s')X''(s)}{(X(s')-X(s))^2}\right]\bigg\}\, ds'.
\end{align*}
Here we used Lemma \ref{lem: some singular integrals along the string curve}.
Since
\beq
\f{X''(s)}{X'(s)}=\Re\f{X''(s)}{X'(s)}+i\kappa(s)|X'(s)|,
\label{eqn: a relation for curvature}
\eeq
we find that
\begin{align*}
&\;\partial_t\mathrm{Im}\f{X''(s)}{X'(s)}  \\
= &\;
\frac{1}{2\pi}\pv\int_{\T}\bigg\{\Re\left[\frac{3X'(s')^2X'(s)^2}{(X(s')-X(s))^4}\right]
\Im\left[\frac{X(s')-X(s)}{X'(s)}\right]\\
&\;\qquad -\kappa(s)|X'(s)|\Im\left[\frac{X'(s')^2X'(s)}{(X(s')-X(s))^3}\right]
\Im\left[\frac{X(s')-X(s)}{X'(s)}\right]\\
&\;\qquad -\kappa(s)|X'(s)|\Re\left[\frac{X'(s')^2X'(s)}{(X(s')-X(s))^3}\right]
\Re\left[\frac{X(s')-X(s)}{X'(s)}\right]\\
&\;\qquad -\f12 \kappa(s)|X'(s)| \Re\left[\f{X'(s')X'(s)}{(X(s')-X(s))^2}\right]\bigg\}\,ds'.
\end{align*}
We used Lemma \ref{lem: some singular integrals along the string curve} again in the last term.
Combining this with \eqref{eqn: eqn for |X'| complex form}, we obtain that
\begin{align*}
&\;|X'(s)|\partial_t\kappa(s)=\partial_t\mathrm{Im}\f{X''(s)}{X'(s)}-\kappa(s)\partial_t|X'(s)|\\
=&\;
\frac{1}{2\pi}\pv\int_{\T}\bigg\{3\Re\left[ J(s',s)^2\right]
\Im\left[\frac{X(s')-X(s)}{X'(s)}\right]\\
&\;
\qquad -2\kappa(s)|X'(s)|\Im\left[\frac{X'(s')^2X'(s)}{(X(s')-X(s))^3}\right]
\Im\left[\frac{X(s')-X(s)}{X'(s)}\right]\\
&\;
\qquad -\kappa(s)|X'(s)|\Re\left[\frac{X'(s')^2X'(s)}{(X(s')-X(s))^3}\right]
\Re\left[\frac{X(s')-X(s)}{X'(s)}\right]\\
&\;
\qquad -\f12 \kappa(s)|X'(s)|\Re\left[\f{X'(s')^2}{(X(s')-X(s))^2}\right]\bigg\}\,ds'\\
=&\;
\frac{1}{2\pi}\pv\int_{\T}\bigg\{3\Re\left[ J(s',s)^2\right]
\Im\left[\frac{X(s')-X(s)}{X'(s)}\right]\\
&\;\qquad
-\frac{3}{2}\kappa(s)|X'(s)|\Im\left[\frac{X'(s')^2X'(s)}{(X(s')-X(s))^3}\right]
\Im\left[\frac{X(s')-X(s)}{X'(s)}\right]\\
&\;\qquad -\frac{3}{2}\kappa(s)|X'(s)|\Re\left[\frac{X'(s')^2X'(s)}{(X(s')-X(s))^3}\right]
\Re\left[\frac{X(s')-X(s)}{X'(s)}\right]\bigg\}ds'\\
=&\;
\frac{3}{2\pi}\pv\int_{\T}\bigg\{\Re\left[ J(s',s)^2\right]
\Im\left[\frac{X(s')-X(s)}{X'(s)}\right]\\
&\;\left. \qquad
-\frac{1}{2}\kappa(s)|X'(s)|\Re\left[\frac{X'(s')^2X'(s)}{(X(s')-X(s))^3}\cdot
\frac{\overline{X(s')-X(s)}}{\overline{X'(s)}}\right]\right\}\,ds'\\
=&\;
\frac{3}{2\pi}\pv\int_{\T} \Re\left[ J(s',s)^2\right]
\bigg\{ \Im\left[\frac{X(s')-X(s)}{X'(s)}\right]
 -\frac{1}{2}\kappa(s)|X'(s)|
\frac{|X(s')-X(s)|^2}{|X'(s)|^2}\bigg\}\,ds'\\
=&\;
\frac{3}{2\pi}\pv\int_{\T} \Re\left[ J(s',s)^2\right]
\frac{|X(s')-X(s)|^2}{|X'(s)|^2}\left\{\Im\,I(s,s') -\f12 \kappa(s)|X'(s)|\right\} ds'.
\end{align*}
Here we used $\Re(AB)=\Re\,A\Re\,B-\Im\,A\Im\,B$ and $\Re(A\bar{B})=\Re\,A\Re\,B+\Im\,A\Im\,B$ with
\[
A=\f{X'(s')^2X'(s)}{(X(s')-X(s))^3},\quad  B=\f{X(s')-X(s)}{X'(s)},\quad
AB=\f{X'(s')^2}{(X(s')-X(s))^2}.
\]
We also used the definitions of $I(s,s')$ and $J(s,s')$ (cf.\;Section \ref{sec: preliminary}), as well as
\[
\Im\left[\f{X(s')-X(s)}{X'(s)}\right]=\Im\left[\f{-1}{I(s,s')}\right]=\f{\Im\,I(s,s')}{|I(s,s')|^2}.
\]
This completes the proof of \eqref{eqn: eqn for curvature}.
\end{proof}
\end{lem}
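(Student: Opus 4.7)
\medskip
\noindent\emph{Proof proposal.} The starting point is the identity
\[
|X'(s)|\,\partial_t\kappa(s)=\partial_t\Im\!\left[\tfrac{X''(s)}{X'(s)}\right]-\kappa(s)\,\partial_t|X'(s)|,
\]
obtained by time-differentiating $|X'(s)|\kappa(s)=\Im[X''(s)/X'(s)]$. My plan is to compute each of the two terms on the right using the evolution equations derived in Section \ref{sec: preliminary whole section}, then recombine.

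For the first term I commute the partial derivatives, $\partial_t\Im[X''/X']=\partial_s\Im[\partial_tX'/X']$, and differentiate the \emph{regularized} expression \eqref{eqn: equation for alpha differentiable} in $s$. This is exactly why that alternative form was prepared: its integrand is bounded, so the $s$-derivative may be passed inside the principal value integral, producing four groups of terms. Two of them carry a factor $X''(s)/X'(s)$ arising from differentiating the factors $X'(s)$ and $X(s)$ in the denominators; the contribution from differentiating the last local piece in \eqref{eqn: equation for alpha differentiable} reproduces the expected algebraic tail, and Lemma \ref{lem: some singular integrals along the string curve} will be used to absorb one of the principal value integrals into a full integral.

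Next I use the formula \eqref{eqn: eqn for |X'| complex form} for $\partial_t|X'(s)|$ (rather than the simplified \eqref{eqn: eqn for |X'|}, in order to keep the same integrand structure), subtract $\kappa(s)\partial_t|X'(s)|$, and substitute the decomposition
\[
\tfrac{X''(s)}{X'(s)}=\Re\!\tfrac{X''(s)}{X'(s)}+i\kappa(s)|X'(s)|
\]
from \eqref{eqn: a relation for curvature} into the two $X''/X'$-terms. The key observation is that the \emph{real} part $\Re[X''(s)/X'(s)]$ will cancel cleanly between the two pieces because the sum of the coefficients in front of it telescopes to a single $\Re\!\left[\frac{X'(s')^2}{(X(s')-X(s))^2}\right]$ term that pairs with the tail from $\kappa\partial_t|X'|$. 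After that cancellation, only $i\kappa(s)|X'(s)|$ contributions remain, multiplied by the integrands that came out of differentiation, so the whole right-hand side becomes proportional to $\kappa(s)$ plus an $\Im$-term coming from the $\Re[3J(s',s)^2]$-piece.

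The remaining step is algebraic: combine the various real/imaginary pairings using $\Re(AB)=\Re A\,\Re B-\Im A\,\Im B$ and $\Re(A\overline B)=\Re A\,\Re B+\Im A\,\Im B$ applied with
\[
A=\tfrac{X'(s')^2X'(s)}{(X(s')-X(s))^3},\qquad B=\tfrac{X(s')-X(s)}{X'(s)},\qquad AB=\tfrac{X'(s')^2}{(X(s')-X(s))^2},
\]
which is precisely the trick that turns the sum of the two $\Re\cdot\Re$ and $\Im\cdot\Im$ terms into a single $\Re[A\overline B]|B|^2$. The output is the factor $\Re[J(s',s)^2]\cdot|X(s')-X(s)|^2/|X'(s)|^2=|X'(s')|^2\cos 2\Phi(s',s)/|X(s')-X(s)|^2$. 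For the $\Im$-piece I use $\Im[(X(s')-X(s))/X'(s)]=\Im\,I(s,s')/|I(s,s')|^2$ to match the $\Im[I(s,s')/|X'(s)|]$ appearing in \eqref{eqn: eqn for curvature}. Dividing by $|X'(s)|$ at the end produces the claimed identity.

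The main obstacle I anticipate is bookkeeping: four distinct terms arise from differentiating \eqref{eqn: equation for alpha differentiable}, another three from $\kappa(s)\partial_t|X'(s)|$, and the cancellation of the $\Re[X''(s)/X'(s)]$-contributions (which is what allows the final expression to depend only on $\kappa(s)$ itself, not on higher tangential derivatives of $X$) must be verified term-by-term. Once that cancellation is in hand, the reduction to the compact form \eqref{eqn: eqn for curvature} via the $\Re(AB)/\Re(A\overline B)$ identity is essentially mechanical.
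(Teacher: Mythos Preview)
Your proposal is correct and follows essentially the same approach as the paper: differentiate \eqref{eqn: equation for alpha differentiable} in $s$, subtract $\kappa$ times \eqref{eqn: eqn for |X'| complex form}, eliminate $\Re[X''/X']$ via the decomposition \eqref{eqn: a relation for curvature} together with Lemma \ref{lem: some singular integrals along the string curve}, and then collapse using the $\Re(AB)/\Re(A\bar B)$ identity with exactly the same choice of $A,B$. The only nuance worth flagging is that the $\Re[X''/X']$-cancellation is partly algebraic (between the two $X''/X'$-terms produced by the $s$-differentiation) and partly via Lemma \ref{lem: some singular integrals along the string curve} (the residual $\tfrac12\Re[X''/X']\cdot\Im J$ piece integrates to zero), rather than by pairing with the tail from $\kappa\,\partial_t|X'|$ as your sketch suggests.
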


We prove the following lemma to study the sign of the bracket on the right-hand side of \eqref{eqn: eqn for curvature}.

\begin{lem}
\label{lem: the curve is outside the osculating circle}
Suppose $\Phi_*\leq \pi/4$. 
Denote $\ka_+(t) : = \max_{s\in \BT}\ka(s,t)$ and
$\ka_-(t) : = \min_{s\in \BT}\ka(s,t)$.
Let $d(s)$ be defined in Proposition \ref{prop: geometric property of the curve}.
%
Then the following holds.
\begin{enumerate}[label = (\roman*)]
\item \label{claim: upper bound for generalized curvature}
For all distinct $s,s'\in \BT$,
$\Im\frac{I(s,s')}{|X'(s)|} \leq\frac{\kappa_+}{2\cos\Phi_*}$.

\item \label{claim: improved upper bound for generalized curvature}

    Let
    $C_0: = \f{2\cos(\Phi_*/2)}{\cos(3\Phi_*/2)}$.
If $\kappa_+\geq C_0 d(s)^{-1}$, then $\Im\frac{I(s,s')}{|X'(s)|} \leq\frac{\kappa_+}{2}$ for all $s'\in \T\setminus\{s\}$.

\item \label{claim: lower bound for generalized curvature}
For all distinct $s,s'\in \BT$, $\Im\frac{I(s,s')}{|X'(s)|}\geq \frac{\kappa_-}{2}$.

\end{enumerate}

\begin{proof}
Fix an arbitrary $s\in \BT$.
Denote
\beqo
Y(s') := \f{I(s,s')}{|X'(s)|}= \f{X'(s)/|X'(s)|}{X(s)-X(s')}\quad \forall\, s'\in \BT \setminus \{s\}.
\eeqo
We calculate that
\beq
|Y(s')| = \f{1}{|X(s)-X(s')|},\quad
Y'(s') = \f{J(s,s')}{|X'(s)|},\quad
|Y'(s')| = \f{|X'(s')|}{|X(s)-X(s')|^2},
\label{eqn: formula for Y'}
\eeq
so $\lim_{s'\to s} |Y(s')| = +\infty$ and $\arg Y'(s')= \Phi(s,s') \in [-\Phi_*,\Phi_*]$.
Hence, there exists a $C^2$-function $f:\BR\to \BR$, such that $\|f'\|_{L^\infty}\leq \tan \Phi_*\leq 1$, and
\[
\Im \, Y(s') = f\big(\Re\, Y(s')\big)\quad \forall\, s'\in \BT\setminus\{s\}.
\]
In fact, the function $\Re \,Y$ maps $\BT\setminus\{s\}$ onto $\BR$, so $Y(\BT\setminus\{s\})$ is exactly the graph of $f$.
As a result, it suffices to study the upper and lower bounds for $f$.

With $ \eta=\Re\, Y(s')$, we write
\[
X(s')=X(s)-\frac{X'(s)}{|X'(s)|Y(s')} =X(s)-\frac{X'(s)}{|X'(s)|(\eta+if(\eta))}=:g(\eta).
\]
Note that here $s$ is treated as a given constant.
Since the curvature formula \eqref{eqn: def of curvature s variable}
applies to any non-degenerate parameterization of the curve, we have
$\kappa(s')=\Im[g''(\eta)/g'(\eta)]/|g'(\eta)|$ under the change of variable $ \eta=\Re\, Y(s')$.
Then we calculate that
\begin{align*}
&g'(\eta)=\frac{X'(s)(1+if'(\eta))}{|X'(s)|(\eta+if(\eta))^2},\quad |g'(\eta)|=\frac{|1+if'(\eta)|}{\eta^2+f(\eta)^2},\\ &\frac{g''(\eta)}{g'(\eta)}=\frac{if''(\eta)}{1+if'(\eta)}-2\cdot \frac{1+if'(\eta)}{\eta+if(\eta)},\\
&\Im\frac{g''(\eta)}{g'(\eta)}=\frac{f''(\eta)}{1+|f'(\eta)|^2}+2\cdot \frac{f(\eta)-\eta f'(\eta)}{\eta^2+f(\eta)^2}.
\end{align*}
Hence, still with $ \eta=\Re\, Y(s')$, we obtain that
\beq
\kappa(s')
=
\f{1}{|g'(\eta)|}\Im\frac{g''(\eta)}{g'(\eta)}
=
\frac{(\eta^2+f(\eta)^2)f''(\eta)}{(1+|f'(\eta)|^2)^{3/2}}+2\frac{f(\eta)-\eta f'(\eta)}{(1+|f'(\eta)|^2)^{1/2}}
\in[\ka_-, \ka_+].
\label{eqn: curvature in terms of f}
\eeq

We then proceed in four steps.

\setcounter{step}{0}
\begin{step}
We first show that, for all $s'\in \BT\setminus\{s\}$,
\beq
|\Im\, Y(s')|\leq \tan \Phi_*|\Re\, Y(s')| + (d(s) \cos \Phi_* )^{-1}.
\label{eqn: imaginary part should not be large compared with real part}
\eeq 

Assume that, at some $s_*\in \BT\setminus\{s\}$, $|X(s) -X(s_*)| = \sup_{s'}|X(s)-X(s')| = d(s)$.
Then
\[
|Y(s_*)| = \f{1}{|X(s)-X(s_*)|} = d(s)^{-1}. 
\]
Since $\arg Y'(s') \in [-\Phi_*,\Phi_*]$, for all $s'\in \BT\setminus\{s\}$,
\beq
\begin{split}
|\Im\, Y(s')|
\leq &\; |\Im\, Y(s_*)| + \tan \Phi_*  \big|\Re\, Y(s')- \Re\, Y(s_*)\big|\\
\leq &\; \tan \Phi_* |\Re\, Y(s')| + |\Im\, Y(s_*)| + \tan \Phi_* |\Re\, Y(s_*)|\\
\leq &\; \tan \Phi_*|\Re\, Y(s')| + (d(s) \cos \Phi_* )^{-1}.
\end{split}
\label{eqn: the string does not appear in certain area}
\eeq
We applied the Cauchy-Schwarz inequality in the last line.

\end{step}

\begin{step}
\label{step: far field}
We then show that
$\lim_{\eta\to\infty}f(\eta)=\f{\ka(s)}{2}$. 
By definition,
\begin{align*}
\Im\,Y(s')=&\; \Im\f{X'(s)/|X'(s)|}{X(s)-X(s')}=\Im\f{\overline{X'(s)}(X(s')-X(s))}{|X'(s)||X(s)-X(s')|^2}\\
=&\; \Im\f{\overline{X'(s)}(X(s')-X(s)-X'(s)(s'-s))}{|X'(s)||X(s)-X(s')|^2}.
\end{align*}
Hence,
\beq
\begin{split}
\lim_{s'\to s}\Im\,Y(s')
= &\;
\lim_{s'\to s}\Im\f{\overline{X'(s)}(X(s')-X(s)-X'(s)(s'-s))/|s'-s|^2}{|X'(s)||X(s)-X(s')|^2/|s'-s|^2}\\
= &\;
\Im\f{\overline{X'(s)}X''(s)/2}{|X'(s)||X'(s)|^2}=\f{\ka(s)}{2}.
\end{split}
\label{eqn: limit of Im Y}
\eeq
We used \eqref{eqn: def of curvature s variable} in the last equality.
This proves the desired claim.
\end{step}

\begin{step}
It is clear that $\ka_+>0$, since $\int_\BT \ka(s)|X'(s)|\,ds = \int_\BT \Im [\pa_{s}\ln X'(s)] \,ds = 2\pi$ by \eqref{eqn: def of curvature s variable}.

If $ \sup_{x\in\R}f(x)\leq \frac{\kappa_+}{2}$, then
\beq
\Im\frac{I(s,s')}{|X'(s)|} = \Im\, Y(s')=
f(\Re\,Y(s')) \leq\frac{\kappa_+}{2}\leq\frac{\kappa_+}{2\cos\Phi_*},
\label{eqn: link generalized curvature with f}
\eeq
which gives \ref{claim: upper bound for generalized curvature} and \ref{claim: improved upper bound for generalized     curvature}.

Next we assume $ \sup_{x\in\R}f(x)> \frac{\kappa_+}{2}$.
Thanks to the previous step,
it must be attained at some $x_\dag\in \BR$, where
\beq
f(x_\dag)>\f{\ka_+}{2},\quad f'(x_\dag) = 0.
\label{eqn: basic estimates for x f dag}
\eeq
Taking $\eta=x_\dag$ in \eqref{eqn: curvature in terms of f}, we find that
\[
f''(x_\dag) \leq
\f{\kappa_+-2 f(x_\dag)}{x_\dag^2+f(x_\dag)^2} <0.
\]
If $f''(x)<0$ for all $x\geq x_\dag$, then $f'$ is strictly decreasing on $[x_\dag,+\infty)$, so $f'(x)\leq f'(x_\dag+1)<0$ for all $x\geq x_\dag+1$, and thus $f(x)\leq f(x_\dag+1)+f'(x_\dag+1)(x-x_\dag-1)$ for $x\geq x_\dag+1$, which contradicts with $\lim_{\eta\to\infty}f(\eta)=\f{\ka(s)}{2}$.
This implies that $\{x\geq x_\dag:\,f''(x)\geq 0\}$ must be nonempty.

Let $\tilde{x}:=\inf\{x\geq x_\dag:\, f''(x)\geq0\}$.
Then $x_\dag<\tilde{x}<+\infty$, $f''(\tilde{x})=0$, and $f''(x)<0$ for $x\in [x_\dag,\tilde{x})$.
Taking $\eta=\tilde{x}$ in \eqref{eqn: curvature in terms of f} yields
\beq
f(\tilde{x})-\tilde{x} f'(\tilde{x})\le \frac{\kappa_+}{2}(1+|f'(\tilde{x})|^2)^{1/2}.
\label{f1}
\eeq
Since $f''(x)<0$ for $x\in [x_\dag,\tilde{x})$, we have $0=f'(x_\dag) \geq f'(x)>f'(\tilde{x})$ for $x\in [x_\dag,\tilde{x})$, and
\[
f(\tilde{x})-\tilde{x} f'(\tilde{x})-(f(x_\dag)-x_\dag f'(\tilde{x}))=\int_{x_\dag}^{\tilde{x}} [f'(x)-f'(\tilde{x})]\, dx>0.
\]
This together with \eqref{f1} implies
\begin{align*}
f(x_\dag)-x_\dag f'(\tilde{x})<f(\tilde{x})-\tilde{x} f'(\tilde{x})\le \frac{\kappa_+}{2}(1+|f'(\tilde{x})|^2)^{1/2}.
\end{align*}
Now using $f'(\tilde{x})<0$ and \eqref{eqn: basic estimates for x f dag},
\begin{align*}
x_\dag\le &\; \frac{({\kappa_+}/{2})(1+|f'(\tilde{x})|^2)^{1/2}-f(x_\dag)}{-f'(\tilde{x})}\\
\le &\; \frac{\kappa_+}{2}\frac{(1+|f'(\tilde{x})|^2)^{1/2}-1}{-f'(\tilde{x})}
=\frac{\kappa_+}{2}\frac{|f'(\tilde{x})|}{(1+|f'(\tilde{x})|^2)^{1/2}+1}.
\end{align*}
Since $ |f'(\tilde{x})|\leq \|f'\|_{L^{\infty}}\leq \tan \Phi_*$,
\beq
x_\dag\leq\frac{\kappa_+}{2}\frac{\tan \Phi_*}{(1+|\tan \Phi_*|^2)^{1/2}+1}
=\frac{\kappa_+}{2}\tan \left(\f{\Phi_*}{2} \right). 
\label{eqn: bound for x_dag kappa plus case}
\eeq
One can analogously prove $x_\dag\geq-\frac{\kappa_+}{2}\tan (\Phi_*/2)$ by studying the point $\tilde{x}':= \sup\{x\leq x_\dag:\,f''(x)\geq 0\}$.
Therefore, $|x_\dag|\leq\frac{\kappa_+}{2}\tan (\Phi_*/2)$.

If $x_\dag \geq 0$, since $f''(x)<0$ for $x\in [x_\dag,\tilde{x})$, we find that
\beq
f(\tilde{x})-\tilde{x} f'(\tilde{x})-(f(x_\dag)-x_\dag f'(x_\dag))=\int_{x_\dag}^{\tilde{x}} [-x f''(x)]\, dx>0.
\label{eqn: certain quantity is monotone}
\eeq
Then by $f'(x_\dag)=0$, \eqref{f1} and
$|f'(\tilde{x})|\leq \tan \Phi_*$, we have
\[
f(x_\dag)<f(\tilde{x})-\tilde{x} f'(\tilde{x})\le \frac{\kappa_+}{2}(1+|f'(\tilde{x})|^2)^{1/2}\leq\frac{\kappa_+}{2}(1+|\tan \Phi_*|^2)^{1/2}=\frac{\kappa_+}{2\cos\Phi_*}.
\]
If $x_\dag\leq 0$, we have $f''(x)<0$ for $x\in (\tilde{x}',x_\dag]$, where $\tilde{x}'$ is defined above, so \eqref{eqn: certain quantity is monotone}
still holds with $\tilde{x}$ there replaced by $\tilde{x}'$.
Then arguing as above, we still obtain $f(x_\dag)<\frac{\kappa_+}{2\cos\Phi_*}$.
Hence, we have proved that $\sup_{x\in \BR}f(x)=f(x_\dag)\leq\f{\ka_+}{2\cos\Phi_*}$, provided $\sup_{x\in \BR}f(x) >\f{\ka_+}{2}$.
This implies that $\sup_{x\in \BR}f(x)\leq\f{\ka_+}{2\cos\Phi_*}$ is always true.
Proceeding as in \eqref{eqn: link generalized curvature with f}, we obtain \ref{claim: upper bound for generalized curvature}.

Now assume that $\kappa_+\geq \f{2\cos(\Phi_*/2)}{\cos(3\Phi_*/2)}d(s)^{-1}$.
If $\sup_{x\in \BR}f(x) >\f{\ka_+}{2}$, thanks to \eqref{eqn: imaginary part should not be large compared with real part} and \eqref{eqn: bound for x_dag kappa plus case}, 
\[
f(x_\dag)
\leq \tan \Phi_*|x_\dag| + (d(s)\cos \Phi_* )^{-1} 
\leq
\tan \Phi_* \cdot \frac{\ka_+}{2}
\tan \left(\f{\Phi_*}{2}\right)
+\f{\ka_+ \cos(3\Phi_*/2)}{2\cos(\Phi_*/2)\cos\Phi_*}
=\frac{\ka_+}{2},
\]
which contradicts with \eqref{eqn: basic estimates for x f dag}.
Hence, $\sup_{x\in \BR}f(x) \leq\f{\ka_+}{2}$, and thus $\Im\, Y(s')\leq \f{\ka_+}{2}$ for $s'\in\BT\setminus\{s\}$.
This proves \ref{claim: improved upper bound for generalized     curvature}.
\end{step}

\begin{step}
The proof of \ref{claim: lower bound for generalized curvature} follows a similar argument.
We only sketch it.

Suppose $\inf_{x\in\R}f(x)<\frac{\kappa_-}{2}$.
With abuse of the notation, the infimum must be attained at some $x_\dag\in \BR$, where
\[
f(x_\dag)<\f{\ka_-}{2},\quad f'(x_\dag) = 0.
\]
We use \eqref{eqn: curvature in terms of f} to show $f''(x_\dag) >0$.
Similar as before, there exists $\tilde{x}'< x_\dag < \tilde{x}$ such that $f''>0$ on $(\tilde{x}',\tilde{x})$ while
$f''(\tilde{x}) = f''(\tilde{x}')=0$.
Following the argument in the previous step, we can show that
\[
\frac{\kappa_-}{2}\frac{|f'(\tilde{x}')|}{(1+|f'(\tilde{x}')|^2)^{1/2}+1}
\leq
x_\dag
\leq -\frac{\kappa_-}{2}\frac{|f'(\tilde{x})|}{(1+|f'(\tilde{x})|^2)^{1/2}+1},
\]
where $f'(\tilde{x}')<0<f'(\tilde{x})$.
If $\ka_->0$, this cannot hold, so we must have $\inf_{x\in \BR} f(x)\geq \f{\ka_-}{2}$.
If $\ka_-\leq 0$, we obtain a naive bound 
\beq
|x_\dag|\leq \frac{|\kappa_-|}{2}.
\label{eqn: bound for x_dag kappa minus case}
\eeq

Now we need an improved version of the estimate \eqref{eqn: the string does not appear in certain area}.
Let $ \beta(s'): =\arg [X(s')-X(s)]$.
Then $\b(s')\in C(s,s+2\pi)$ and $ \b(s^+)=\arg X'(s)$; in addition, $\beta((s+2\pi)^-) - \b(s^+)=\pi$ since $X(\T)$ is parameterized in the counter-clockwise direction and $X\in C^1(\BT)$.
Hence, there exists $\tilde{s}\in (s,s+2\pi)$, such that
$\b(\tilde{s})-\b(s^+)=\pi/2$.
We thus find
\begin{align*}
Y(\tilde{s}) = &\; \f{X'(s)/|X'(s)|}{X(s)-X(\tilde{s})} = \f{e^{i\b(s^+)}}{-|X(s)-X(\tilde{s})|e^{i\b(\tilde{s})}}\\
= &\;
\f{1}{-|X(s)-X(\tilde{s})|e^{i\pi/2}}=\f{i}{|X(s)-X(\tilde{s})|}
=i|Y(\tilde{s})|.
\end{align*}
By the definition of $d(s)$, we have $|X(s)-X(\tilde{s})|\leq d(s)$, so $|Y(\tilde{s})|\geq  d(s)^{-1}$.
Hence, for any $s'\in \BT \setminus \{s\}$, 
\beq
\begin{split}
\Im\, Y(s')
\geq &\; \Im\, Y(\tilde{s}) - \tan \Phi_*  \big|\Re\, Y(s')- \Re\, Y(\tilde{s})\big|\\
= &\; - \tan \Phi_*  |\Re\, Y(s')| + |Y(\tilde{s})|\\
\geq &\; -\tan \Phi_* |\Re\, Y(s')| + d(s)^{-1}.
\end{split}
\label{eqn: lower bound for generalized curvature at arbitrary point}
\eeq
Therefore, by \eqref{eqn: bound for x_dag kappa minus case} and the fact $\Phi_*\leq \pi/4$,
\[
f(x_\dag)
\geq -\tan \Phi_* |x_\dag| + d(s)^{-1}
\geq -\tan \Phi_* \cdot \frac{|\kappa_-|}{2}
\geq \f{\ka_-}{2},
\]
which contradicts with the assumption.
This proves \ref{claim: lower bound for generalized curvature}.
\end{step}
\end{proof}
\end{lem}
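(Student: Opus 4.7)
The plan is to normalize the problem by setting
\[
Y(s') := \f{I(s,s')}{|X'(s)|} = \f{X'(s)/|X'(s)|}{X(s)-X(s')},\quad s'\in \T\setminus\{s\},
\]
so the quantities of interest are $\Im Y(s')$. From the definitions, $|Y'(s')|=|X'(s')|/|X(s)-X(s')|^2$ and $\arg Y'(s')=\Phi(s,s')\in[-\Phi_*,\Phi_*]$. Since $\Phi_*\le \pi/4<\pi/2$, the function $\eta := \Re Y$ is strictly monotone on each arc of $\T\setminus\{s\}$ and, because $|Y(s')|\to\infty$ as $s'\to s$, it maps $\T\setminus\{s\}$ onto $\R$. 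Thus the image $Y(\T\setminus\{s\})$ is the graph of a $C^2$-function $f:\R\to \R$ with $\|f'\|_{L^\infty}\le \tan\Phi_*$. A Taylor expansion of $X$ at $s$ (together with the curvature formula \eqref{eqn: def of curvature s variable}) yields $\lim_{s'\to s}\Im Y(s') = \ka(s)/2$, i.e., $\lim_{\eta\to\pm\infty}f(\eta)=\ka(s)/2$.

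The key identity is an ``intrinsic'' curvature formula for the graph. Reparameterize the image $X(\T\setminus\{s\})$ via $g(\eta) := X(s) - [X'(s)/|X'(s)|]/(\eta+if(\eta))$; since the geometric curvature is invariant under reparameterization, $\ka(s')=\Im[g''(\eta)/g'(\eta)]/|g'(\eta)|$, and a direct computation gives
\[
\ka(s')= \f{(\eta^2+f(\eta)^2)\,f''(\eta)}{(1+f'(\eta)^2)^{3/2}} + \f{2(f(\eta)-\eta f'(\eta))}{(1+f'(\eta)^2)^{1/2}}\in [\ka_-,\ka_+].
\]
All three statements now reduce to extracting pointwise upper/lower bounds on $f$ from this ODE-style inequality together with the far-field value $\ka(s)/2$ and the slope bound $|f'|\le \tan\Phi_*$.

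For \textbf{(i)}, it suffices to show $\sup_\R f\le \ka_+/(2\cos\Phi_*)$. Suppose instead $\sup f>\ka_+/2$; by the far-field limit the supremum is attained at some $x_\dag\in\R$ with $f'(x_\dag)=0$ and $f''(x_\dag)\le 0$. The identity at $x_\dag$ forces $f''(x_\dag)<0$; since $f(+\infty)=\ka(s)/2<f(x_\dag)$, a first zero $\tilde x>x_\dag$ of $f''$ exists. Evaluating the identity at $\tilde x$ yields $f(\tilde x)-\tilde x f'(\tilde x)\le (\ka_+/2)\sqrt{1+f'(\tilde x)^2}$; monotonicity of $f(x)-xf'(\tilde x)$ on $[x_\dag,\tilde x)$ (where $f''<0$) and $f'(\tilde x)<0$ together with $|f'(\tilde x)|\le \tan\Phi_*$ give $x_\dag\le (\ka_+/2)\tan(\Phi_*/2)$. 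A mirror argument to the left yields $|x_\dag|\le (\ka_+/2)\tan(\Phi_*/2)$, and then inserting this bound and $|f'(\tilde x)|\le\tan\Phi_*$ into the identity at the appropriate one of $\tilde x$ or its left analogue gives $f(x_\dag)\le \ka_+/(2\cos\Phi_*)$, closing the bootstrap.

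Parts \textbf{(ii)} and \textbf{(iii)} refine this scheme by supplementing the ODE constraint with a global geometric estimate on $Y$. For (ii), the extremal chord $s_*$ realizing $d(s)=|X(s)-X(s_*)|$ satisfies $|Y(s_*)|=1/d(s)$; propagating this along $Y$ with slope bound $\tan\Phi_*$ gives $|\Im Y(s')|\le \tan\Phi_*|\Re Y(s')|+(d(s)\cos\Phi_*)^{-1}$. Applying this at $x_\dag$, together with $|x_\dag|\le (\ka_+/2)\tan(\Phi_*/2)$ and $\ka_+\ge C_0/d(s)$, contradicts $f(x_\dag)>\ka_+/2$. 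For (iii), assume $\inf f<\ka_-/2$ attained at $x_\dag$; the analogous convexity analysis (now locating first zeros of $f''$ on both sides of $x_\dag$) yields an immediate contradiction when $\ka_->0$ and the crude bound $|x_\dag|\le |\ka_-|/2$ when $\ka_-\le 0$. To close the latter case I construct a lower-bound reference point: by continuity of $\arg(X(s')-X(s))$ along the counter-clockwise parameterization and the $\pi$-jump between one-sided limits at $s$, there exists $\tilde s$ with $Y(\tilde s)=i|Y(\tilde s)|$ and $|Y(\tilde s)|\ge 1/d(s)$; propagating by slope $\tan\Phi_*$ yields $\Im Y(s')\ge -\tan\Phi_*|\Re Y(s')|+1/d(s)$, and evaluating at $x_\dag$ with $\Phi_*\le\pi/4$ contradicts $f(x_\dag)<\ka_-/2$. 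The main obstacle is step \textbf{(i)}: turning the pointwise curvature identity at a single critical value into a $\Phi_*$-sensitive numerical bound, which requires the right choice of the auxiliary zero $\tilde x$ of $f''$ and a careful monotonicity/sign bookkeeping; once this is done, (ii) and (iii) follow by adding the correct geometric reference point.
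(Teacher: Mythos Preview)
Your proposal is correct and follows essentially the same approach as the paper's proof: the normalization $Y$, the graph representation $f$ with slope bound $\tan\Phi_*$, the intrinsic curvature identity, the far-field limit $\kappa(s)/2$, the critical-point analysis locating zeros $\tilde x,\tilde x'$ of $f''$, and the geometric reference points $s_*$ (for (ii)) and $\tilde s$ (for (iii)) are all exactly as in the paper. The only cosmetic difference is in the final step of (i): the paper obtains $f(x_\dag)<\kappa_+/(2\cos\Phi_*)$ via the separate monotonicity of $\eta\mapsto f(\eta)-\eta f'(\eta)$ on the side determined by $\mathrm{sgn}(x_\dag)$, whereas your phrasing suggests reusing the already-established inequality $f(x_\dag)-x_\dag f'(\tilde x)<(\kappa_+/2)\sqrt{1+f'(\tilde x)^2}$ together with the sign of $x_\dag f'(\tilde x)$ on the appropriate side---both routes work and yield the same bound.
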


Now we can establish an extremum principle for $\ka(s,t)$, as well as its upper and lower bounds.

\begin{prop}
\label{prop: max principle and decay estimate for curvature}
Suppose $\Phi_*(0)< \f{\pi}{4}$, where $\Phi_*(t)$ is defined in Proposition \ref{prop: bound for the max |Phi|}.
Denote $\ka_+(t) : = \max_{s\in \BT}\ka(s,t)$ and
$\ka_-(t) : = \min_{s\in \BT}\ka(s,t)$ as in Lemma \ref{lem: the curve is outside the osculating circle}.
Let $\ka_*(t) := \sup_{s\in \BT}|\ka(s,t)|$.
Then
\begin{enumerate}[label = (\roman*)]

  \item $\max \{\ka_+(t)R_X, 7+5\sqrt{2}\}$ is a non-increasing Lipschitz function.
For $t>0$,
\beqo
\kappa_+(t)R_X
\leq 1+C \exp\left[C \left(\int_0^t \cos 2\Phi_*(\tau)\,d\tau\right)^{-1} -ct\right],
\eeqo
where $C,c>0$ are universal constants.

  \item $\ka_-(t)R_X$ is a non-decreasing Lipschitz function.
For $t>0$,
\beqo
\kappa_-(t)R_X
\geq 1-C \exp\left[C \left(\int_0^t \cos 2\Phi_*(\tau)\,d\tau\right)^{-1} -ct\right],
\eeqo
where $C,c>0$ are universal constants.
\item $\max\{\ka_*(t)R_X,7+5\sqrt{2}\}$ is a non-increasing Lipschitz function.
    In addition, $\ka_*(t)R_X \geq 1$ for all $t>0$.
\end{enumerate}

\begin{rmk}
In particular, Proposition \ref{prop: bound for the max |Phi|} implies that the above upper and lower bounds are finite for any $t>0$, and they both converge to $1$ exponentially as $t\to +\infty$.
\end{rmk}

\begin{proof}
We shall still use the notations in Lemma \ref{lem: the curve is outside the osculating circle} as well as its proof.

\setcounter{step}{0}
\begin{step}
By Proposition \ref{prop: bound for the max |Phi|}, $\Phi_*(t) = \sup |\Phi(s_1,s_2,t)|<\f{\pi}{4}$ for all $t$.
Take an arbitrary $t$, and we first study $\ka_+(t)$.
Assume that $\ka_+(t)$ is attained at some $s\in \BT$, i.e., $\ka(s,t) = \ka_+(t)$.
Without loss of generality, we additionally assume
\beq
\ka(s)>  \f{\cos (\Phi_*/2)}{\cos(3\Phi_*/2)} \tan^2 \left(\f{\pi}{4}+ \f{\Phi_*}{2}\right) R_X^{-1} \geq R_X^{-1} >0.
\label{eqn: largeness assumption on curvature}
\eeq
Then Proposition \ref{prop: geometric property of the curve} implies
\beq
\ka(s)> C_0\tan \left(\f{\pi}{4}+ \f{\Phi_*}{2}\right) d(s)^{-1}\geq 2\tan \left(\f{\pi}{4}+ \f{\Phi_*}{2}\right) d(s)^{-1}\geq 2d(s)^{-1},
\label{eqn: implication of condition on kappa}
\eeq
where $C_0 = \f{2\cos(\Phi_*/2)}{\cos(3\Phi_*/2)}\geq 2$ was defined in Lemma \ref{lem: the curve is outside the osculating circle}.
By Lemma \ref{lem: the curve is outside the osculating circle}, $\Im\frac{I(s,s')}{|X'(s)|}\leq\frac{\kappa(s)}{2} $ for all $s'\in \T\setminus\{s\}$.
We also assume that $d(s) = |X(s)-X(s_*)|$ for some $s_*\in \BT$.

Define
\[
A : = \left\{s'\in \BT:\, \Im \f{I(s,s')}{|X'(s)|}\leq
\left(\f{\ka(s)}{2d(s)} \right)^{1/2} \right\}.
\]
If $|X(s)-X(s')|\geq [(2d(s))/\ka(s)]^{1/2}$, then
\[
\Im\f{I(s,s')}{|X'(s)|} \leq \f{1}{|X(s)-X(s')|} \leq \left( \f{\ka(s)}{2d(s)}\right)^{1/2},
\]
so $s'\in A$.
In particular, $s_*\in A$ since $d(s) = |X(s)-X(s_*)|>2\ka(s)^{-1}$, which means $A$ is non-empty.
Then by Lemma \ref{lem: equation for the curvature} and Lemma \ref{lem: the curve is outside the osculating circle},
\[
\partial_t\kappa(s)
\leq -\frac{3 \ka(s)}{4\pi} \cos 2\Phi_*
\left[1- \left(\f{2}{d(s)\ka(s)}\right)^{1/2} \right]
\int_A
\frac{|X'(s')|^2 }{|X(s')-X(s)|^2} \, ds'. 
\]

By the Cauchy-Schwarz inequality,
\begin{align*}
\int_A
\frac{|X'(s')|^2}{|X(s')-X(s)|^2} \, ds'
\geq &\;
\left(\int_{A} \frac{|X'(s')|}{|X(s')-X(s)|} \, ds' \right)^2
\left(\int_A\,ds'\right)^{-1}
\geq
\frac{1}{2\pi}
\left(\int_{X(A)} \frac{d\CH^1(z)}{|z-X(s)|} \right)^2,
\end{align*}
where $d\CH^1(z)$ denotes the 1-dimensional Hausdorff measure.
The definition of $A$ and $d(s)$ 
allows us to derive a naive bound by the co-area formula
\[
\int_{X(A)} \frac{d\CH^1(z)}{|z-X(s)|}
\geq
\int_{ (2d(s)/\ka(s))^{1/2}}^{d(s)} \f{1}{r}\,dr
= \f12 \ln\big(d(s)\ka(s)/2\big).
\]
If $d(s)\ka(s)\leq 3$, we need an improved estimate.
Using \eqref{eqn: formula for Y'} and the notations in Lemma \ref{lem: the curve is outside the osculating circle},
\beq
\int_{A} \frac{|X'(s')|}{|X(s')-X(s)|} \, ds'
= \int_{A} \f{|Y'(s')|}{|Y(s')|}\,ds'
\geq 
\int_{\{f(x)\leq [\ka(s)/(2d(s))]^{1/2}\}} \f{dx}{(x^2+f(x)^2)^{1/2}}.
\label{eqn: estimate the arclength outside a circle}
\eeq
With $x_*:= \Re \, Y(s_*)$,
\[
x_*^2 + f(x_*)^2 = |Y(s_*)|^2 = \f{1}{|X(s)-X(s_*)|^2} = d(s)^{-2},
\]
so $f(x_*)\leq d(s)^{-1} < [\ka(s)/(2d(s))]^{1/2}$.
Since $|f'|\leq \tan \Phi_*$, $f(x)\leq [\ka(s)/(2d(s))]^{1/2}$ for all $x$ such that 
\[
|x-x_*|\leq r:= \min\left\{\f{[\ka(s)/(2d(s))]^{1/2} - d(s)^{-1}}{\tan \Phi_*},\,d(s)^{-1}\right\}.
\]
On the other hand, \eqref{eqn: implication of condition on kappa} implies 
$\tan \left(\f{\pi}{4}+ \f{\Phi_*}{2}\right)\leq d(s)\ka(s)/2$,
so $\tan \Phi_* \leq C(d(s)\ka(s)-2)$ for some universal $C$.
Combining these estimates, we find that
$r \in [cd(s)^{-1},  d(s)^{-1}]$ for some universal $c\in (0,1)$.
Hence, on $[x_*-r ,x_*+r]$, $x^2+f(x)^2 \leq C d(s)^{-2}$ for some universal $C$.
Then \eqref{eqn: estimate the arclength outside a circle} gives that
\beqo
\int_{A} \frac{|X'(s')|}{|X(s')-X(s)|} \, ds'
\geq
\int_{|x-x_*|\leq r} \f{dx}{(x^2+f(x)^2)^{1/2}}
\geq C,
\eeqo
where $C$ is a universal constant.
As a result, in all cases,
\[
\int_A
\frac{|X'(s')|^2}{|X(s')-X(s)|^2} \, ds'
\geq C\left[1+ \ln\big(d(s)\ka(s)/2\big) \right]^2.
\]
Therefore, at the point $s$,
\[
\partial_t\kappa(s)
\leq -C \ka(s) \cos 2\Phi_*
\left[1- \left(\f{2}{d(s)\ka(s)}\right)^{1/2} \right]
\left[1+ \ln\big(d(s)\ka(s)/2\big) \right]^2. 
\]
By Proposition \ref{prop: geometric property of the curve} and \eqref{eqn: largeness assumption on curvature},
\[
\partial_t\kappa(s,t)
\leq -C\ka(s) \cos 2\Phi_*
\left[ 1 - \tan \left(\f{\pi}{4}+ \f{\Phi_*}{2}\right)^{1/2} \left( \ka(s)R_X \right)^{-1/2} \right] 
\big[1+ \ln(\ka(s)R_X)
\big]^2,
\]
where $C>0$ is a universal constant.


Since $\ka(s,t)$ is $C^1$ in the space-time, $\ka_+(t)$ is a Lipschitz function.
Arguing as in Proposition \ref{prop: bound for the max |Phi|},
%
we can show that, if 
\beq
\ka_+(t)R_X >  \f{\cos (\Phi_*/2)}{\cos(3\Phi_*/2)} \tan^2\left(\f{\pi}{4}+ \f{\Phi_*}{2}\right),
\label{eqn: lower threshold for ka_*}
\eeq
it holds
\beq
\begin{split}
\f{d}{dt}[\kappa_+(t)R_X]
\leq &\; -C\ka_+ R_X \cos 2\Phi_*
\big[1+ \ln(\ka_+ R_X)
\big]^2 \\
&\;\quad \cdot \left[ 1 - \tan \left(\f{\pi}{4}+ \f{\Phi_*}{2}\right)^{1/2} \left( \ka_+ R_X \right)^{-1/2} \right],
\end{split}
\label{eqn: ODE for kappa_*}
\eeq
for almost every $t$.

Denote
\beqo
h(\phi):= \f{\cos (\phi/2)}{\cos(3\phi/2)} \tan^2 \left(\f{\pi}{4}+ \f{\phi}{2}\right).
\eeqo
Since $\Phi_*(t)\leq \pi/4$ for all time, if $\ka_+(t)R_X > h(\pi/4)$, \eqref{eqn: lower threshold for ka_*} holds and \eqref{eqn: ODE for kappa_*} reduces to
\[
\f{d}{dt}[\kappa_+(t)R_X]
\leq -C\cos 2\Phi_*\cdot \ka_+ R_X
\big[\ln(\ka_+ R_X) \big]^2,
\]
which gives
\beqo
\kappa_+(t)R_X
\leq \max\left\{\exp\left[C \left(\int_0^t \cos 2\Phi_*(\tau)\,d\tau\right)^{-1}\right],\, h\left(\f{\pi}{4}\right)\right\}
\eeqo
for all time.
Here $C>0$ is a universal constant.
In view of Proposition \ref{prop: bound for the max |Phi|}, there exists $t_0>0$, such that
\[
\exp\left[C \left(\int_0^{t_0} \cos 2\Phi_*(\tau)\,d\tau\right)^{-1}\right] = h\left(\f{\pi}{4}\right).
\]
Here $t_0$ has universal upper and lower bounds.
Whenever $t\geq t_0$, $\ka_+(t)R_X \leq h(\pi/4)$, and $\cos 2\Phi_*$ admits a universal positive lower bound. 
Hence, for $t\geq t_0$, if \eqref{eqn: lower threshold for ka_*} holds, \eqref{eqn: ODE for kappa_*} implies
\beqo
\begin{split}
\f{d}{dt}\big[\ka_+(t)R_X\big]
\leq &\; -C
\left[\ka_+ R_X  - \tan \left(\f{\pi}{4}+ \f{\Phi_*}{2}\right)^{1/2} \left( \ka_+ R_X \right)^{1/2} \right]\\
\leq
&\; -C\left[ \ka_+(t) R_X - \tan \left(\f{\pi}{4}+ \f{\Phi_*(t)}{2}\right) \right].
\end{split}
\eeqo
By virtue of Proposition \ref{prop: bound for the max |Phi|}, as $t\to +\infty$, $h(\Phi_*)$ and $\tan  (\f{\pi}{4}+ \f{\Phi_*}{2})$ converge to $1$ exponentially with some explicit rates.
Assuming the constant $C>0$ in the above inequality to be smaller if necessary, we obtain that for some $c>0$,
\[
\ka_+(t) R_X \leq 1+ e^{-c(t-t_0)} h\left(\f{\pi}{4}\right),\quad \forall \, t\geq t_0.
\]
Note that with $c>0$ being small, the right-hand side of the above estimate can be made greater than the right-hand side of \eqref{eqn: lower threshold for ka_*}
for all $t\geq t_0$.
Combining all these estimates, the desired upper bound for $\ka_+(t)R_X$ follows.
Lastly, it is clear from the above proof that $\max \{\ka_+(t)R_X, h(\pi/4)\}$ is a non-increasing Lipschitz function in $t$, with $h(\pi/4) = 7+5\sqrt{2}$.

\end{step}

\begin{step}
Next we study $\ka_-(t)$.
Assume that $\ka_-(t)$ is achieved at some $\tilde{s}\in \BT$.
By Lemma \ref{lem: the curve is outside the osculating circle}, $\Im\frac{I(\tilde{s},s')}{|X'(\tilde{s})|}\geq\f{\kappa(\tilde{s})}{2} $ for all $s'\in \T\setminus\{\tilde{s}\}$.
By Lemma \ref{lem: equation for the curvature}, $\pa_t \ka(\tilde{s})\geq 0$.
Arguing as in Proposition \ref{prop: bound for the max |Phi|}, we know that $\ka_-(t)$ is a non-decreasing Lipschitz function.
It remains to prove the lower bound for $\ka_-(t)$.

First we consider the case $\ka(\tilde{s})< -4R_X^{-1}<0$.
Given this, by Proposition \ref{prop: geometric property of the curve},
\beq
d(\tilde{s})\geq 2R_X \tan\left(\f{\pi}{4}-\f{\Phi_*}{2}\right) \geq (2\sqrt{2}-2)R_X,
\label{eqn: lower bound for d at s tilde}
\eeq
so $d(\tilde{s})|\ka(\tilde{s})| \geq 8(\sqrt{2}-1) >3$.
Let
\[
\tilde{A} : = \left\{s'\in \BT:\, \Im \f{I(\tilde{s},s')}{|X'(\tilde{s})|}\geq
-\left(\f{|\ka(\tilde{s})|}{2d(\tilde{s})} \right)^{1/2} \right\}.
\]
If $|X(\tilde{s})-X(s')|\geq [(2d(\tilde{s}))/|\ka(\tilde{s})|]^{1/2}$, then $s'\in \tilde{A}$ because
\[
\Im\f{I(\tilde{s},s')}{|X'(\tilde{s})|} \geq -\f{1}{|X(\tilde{s})-X(s')|} \geq -\left( \f{|\ka(\tilde{s})|}{2d(\tilde{s})}\right)^{1/2}.
\]
 Using Lemma \ref{lem: equation for the curvature} and Lemma \ref{lem: the curve is outside the osculating circle}, we derive as before to see that
\beqo
\begin{split}
\partial_t\kappa(\tilde{s})
\geq &\; -\frac{3 \ka(\tilde{s})}{4\pi} \cos 2\Phi_*
\left[1- \left(\f{2}{d(\tilde{s})|\ka(\tilde{s})|}\right)^{1/2} \right]
\int_{\tilde{A}}
\frac{|X'(s')|^2 }{|X(s')-X(\tilde{s})|^2} \, ds'\\
\geq &\; C  |\ka(\tilde{s})| \cos 2\Phi_*
\left[\ln\big(d(\tilde{s})|\ka(\tilde{s})|/2\big)\right]^2\\
\geq &\; C  |\ka(\tilde{s})| \cos 2\Phi_*
\left[\ln\big(|\ka(\tilde{s})|R_X\big)\right]^2.
\end{split}
\eeqo
In the last line, we used \eqref{eqn: lower bound for d at s tilde} and the fact $|\ka(\tilde{s})|R_X >4$.

Since $\ka(s,t)$ is $C^1$ in the space-time, $\ka_-(t)$ is a Lipschitz function.
Hence, if $\ka_-(t)R_X <-4$, it holds for almost every $t$ that
\beqo
\f{d}{dt}[\kappa_-(t)R_X]
\geq -C\cos 2\Phi_*\cdot \ka_-(t)R_X
\left[\ln\big|\ka_-(t) R_X\big|\right]^2,
\eeqo
where $C>0$ is a universal constant.
This gives
\beq
\kappa_-(t)R_X
\geq \min\left\{-\exp\left[C \left(\int_0^t \cos 2\Phi_*(\tau)\,d\tau\right)^{-1}\right],\, -4\right\}
\label{eqn: lower bound for curvature small time}
\eeq
for all $t>0$.

Next we improve the lower bound \eqref{eqn: lower bound for curvature small time} for large $t$.
Arguing as in \eqref{eqn: lower bound for generalized curvature at arbitrary point}, we find that
\[
\begin{split}
\Im\, \f{I(\tilde{s},s')}{|X'(\tilde{s})|}
\geq &\; -\tan \Phi_* \left|\Re\, \f{I(\tilde{s},s')}{|X'(\tilde{s})|}\right|
 + d(\tilde{s})^{-1} \\
\geq &\; -\tan \Phi_* | X(\tilde{s})-X(s')|^{-1}
 + d(\tilde{s})^{-1}.
\end{split}
\]
If $| X(\tilde{s})-X(s')| \geq d(\tilde{s})/2$,
\[
\Im\, \f{I(\tilde{s},s')}{|X'(\tilde{s})|}
\geq d(\tilde{s})^{-1}\big(1 - 2 \tan \Phi_* \big).
\]
Using Lemma \ref{lem: equation for the curvature} and Lemma \ref{lem: the curve is outside the osculating circle} and deriving as before, we find
\begin{align*}
\partial_t\kappa(\tilde{s})
\geq &\;
\frac{3}{2\pi} \cos 2\Phi_* \int_{\{| X(\tilde{s})-X(s')| \geq d(\tilde{s})/2\}}
\frac{|X'(s')|^2}{|X(s')-X(\tilde{s})|^2} \left[\Im \f{I(\tilde{s},s')}{|X'(\tilde{s})|} -\f12 \kappa(\tilde{s})\right]_+ ds'\\
\geq &\;\frac{3}{2\pi} \cos 2\Phi_*
\left[d(\tilde{s})^{-1}\big(1 - 2 \tan \Phi_* \big) -\f12 \kappa(\tilde{s})\right]_+
\int_{\{| X(\tilde{s})-X(s')| \geq d(\tilde{s})/2\}}
\frac{|X'(s')|^2}{|X(s')-X(\tilde{s})|^2} \,ds'\\
\geq &\;C\cos 2\Phi_*
\left[d(\tilde{s})^{-1}\big(1 - 2 \tan \Phi_* \big) -\f12 \kappa(\tilde{s})\right]_+.
\end{align*}
Here the notation $(\cdot)_+$ means taking the positive part, i.e., $a_+ = \max\{a,0\}$ for any $a\in \BR$.

By virtue of Proposition \ref{prop: bound for the max |Phi|}, with abuse of the notation, there exists a universal $t_0>0$, such that for all $t\geq t_0$, it holds $1 - 2 \tan \Phi_* >0$ and $\cos 2\Phi_*\geq C$ with $C>0$ being universal.
We will assume $t\geq t_0$ for simplicity.
Using $d(\tilde{s})\leq d_*$ and the upper bound for $d_*$ in Proposition \ref{prop: geometric property of the curve},
\begin{align*}
\partial_t\kappa(\tilde{s})
\geq &\; C
\left[R_X^{-1} \tan \left(\f{\pi}{4}-\f{\Phi_*}{2}\right) \big(1 - 2 \tan \Phi_* \big) - \kappa(\tilde{s})\right]_+.
\end{align*}
Therefore, for almost all $t\geq t_0$,
\[
\f{d}{dt}[\kappa_-(t)R_X]
\geq C
\left[\tilde{h}\big(\Phi_*(t)\big) - \kappa_-(t)R_X\right]_+,
\]
where 
\[
\tilde{h}(\phi): =  \tan \left(\f{\pi}{4}-\f{\phi}{2}\right) \big(1- 2 \tan \phi \big).
\]
Proposition \ref{prop: bound for the max |Phi|} implies that, as $t\to +\infty$, $\tilde{h}(\Phi_*(t))$ converges to $1$ exponentially with some explicit rate.
Moreover, \eqref{eqn: lower bound for curvature small time} and Proposition \ref{prop: bound for the max |Phi|} imply that $\ka_-(t_0)R_X\geq -C$ for some universal $C>0$.
Therefore, for $t\geq t_0$,
\[
\kappa_-(t)R_X \geq 1- C e^{-c(t-t_0)},
\]
where $C,c>0$ are universal.
This combined with \eqref{eqn: lower bound for curvature small time} yields the desired lower bound.
\end{step}

\begin{step}
Lastly, that $\ka_*(t)R_X \geq 1$ follows from \cite{MR0107214,Pankrashkin2015AnIF}.
The monotonicity and Lipschitz regularity of $\max\{\ka_*(t)R_X,7+5\sqrt{2}\}$ follows from that of $\max\{\ka_+(t)R_X,7+5\sqrt{2}\}$ and $\ka_-(t)R_X$.
\end{step}
\end{proof}
\end{prop}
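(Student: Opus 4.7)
The plan is to run an extremum principle on the curvature evolution equation \eqref{eqn: eqn for curvature} from Lemma \ref{lem: equation for the curvature}, exploiting the sign information on the bracket $\Im[I(s,s')/|X'(s)|] - \kappa(s)/2$ provided by Lemma \ref{lem: the curve is outside the osculating circle}, together with the geometric consequences of $\Phi_*<\pi/4$ from Proposition \ref{prop: geometric property of the curve}. Because $\kappa(s,t)\in C^1(\BT\times[0,T])$, the functions $\kappa_\pm(t)$ are Lipschitz, and the argument in \cite{cordoba2009maximum} (used already in the proof of Proposition \ref{prop: bound for the max |Phi|}) lets us transfer a pointwise inequality for $\partial_t\kappa$ at an extremal $s$ to a differential inequality for $\kappa_\pm(t)R_X$.

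For the upper bound (i), fix $t$ and let $s$ attain $\kappa_+(t)$. The two thresholds to control are $\kappa_+(t)R_X>h_1(\Phi_*)$ (large enough to force the kernel sign via Lemma \ref{lem: the curve is outside the osculating circle}\ref{claim: improved upper bound for generalized curvature} through Proposition \ref{prop: geometric property of the curve}) and $\kappa_+(t)R_X > 7+5\sqrt{2}=h(\pi/4)$. Under these, the bracket in \eqref{eqn: eqn for curvature} is nonpositive, and to extract a quantitative rate I would restrict to the set $A=\{s': \Im[I(s,s')/|X'(s)|]\leq (\kappa(s)/(2d(s)))^{1/2}\}$. On $A$, the bracket is $\leq -\tfrac12 \kappa_+(1-[2/(d(s)\kappa_+)]^{1/2})$. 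The measure of $A$ is bounded below by a Cauchy–Schwarz/co-area argument on the integral $\int_A|X'(s')|/|X(s)-X(s')|\,ds'$, yielding a lower bound of order $(1+\ln(d(s)\kappa_+/2))^2$, or a universal constant when $d(s)\kappa_+$ is only mildly large (using $|f'|\leq\tan\Phi_*$ in the graph representation $\Im Y=f(\Re Y)$ from the proof of Lemma \ref{lem: the curve is outside the osculating circle}). Combining yields
\[
\frac{d}{dt}[\kappa_+R_X]\leq -C\cos 2\Phi_*\cdot \kappa_+R_X\big[\ln(\kappa_+ R_X)\big]^2\Big[1-\tan\big(\tfrac{\pi}{4}+\tfrac{\Phi_*}{2}\big)^{1/2}(\kappa_+R_X)^{-1/2}\Big]
\]
whenever the threshold is exceeded. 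Integrating the crude Bernoulli-type inequality $(d/dt)[\kappa_+R_X]\leq -C\cos 2\Phi_*\cdot \kappa_+R_X[\ln(\kappa_+ R_X)]^2$ while $\kappa_+R_X>h(\pi/4)$ gives the factor $\exp[C(\int_0^t\cos 2\Phi_*\,d\tau)^{-1}]$. After some universal $t_0$ Proposition \ref{prop: bound for the max |Phi|} forces $\kappa_+R_X\leq h(\pi/4)$ and $\cos 2\Phi_*\geq c>0$, so the sharper form becomes $\tfrac{d}{dt}[\kappa_+R_X]\leq -C(\kappa_+R_X-\tan(\tfrac{\pi}{4}+\tfrac{\Phi_*}{2}))$, and the exponential decay of $\tan(\tfrac{\pi}{4}+\Phi_*/2)-1$ from Proposition \ref{prop: bound for the max |Phi|} yields $\kappa_+R_X-1\leq Ce^{-c(t-t_0)}$, which gives (i) and also the monotonicity of $\max\{\kappa_+R_X,7+5\sqrt{2}\}$.

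For (ii), let $\tilde s$ attain $\kappa_-(t)$. Lemma \ref{lem: the curve is outside the osculating circle}\ref{claim: lower bound for generalized curvature} makes the bracket in \eqref{eqn: eqn for curvature} nonnegative at $\tilde s$, so $\partial_t\kappa(\tilde s)\geq 0$, giving the monotonicity/Lipschitz claim immediately. For the quantitative lower bound when $\kappa_-<0$ is large in magnitude, Proposition \ref{prop: geometric property of the curve}\ref{property: diameter lower bound}--\ref{property: polar coordinate representation} bounds $d(\tilde s)$ below by a fixed multiple of $R_X$, and a symmetric set $\tilde A=\{s':\Im[I(\tilde s,s')/|X'(\tilde s)|]\geq -(|\kappa(\tilde s)|/(2d(\tilde s)))^{1/2}\}$ delivers a reverse inequality of the same Bernoulli form, giving the crude bound $\kappa_-R_X\geq -\exp[C(\int_0^t\cos 2\Phi_*)^{-1}]$. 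To upgrade to $\kappa_-R_X\to 1$, I would use the improved pointwise bound $\Im[I(\tilde s,s')/|X'(\tilde s)|]\geq d(\tilde s)^{-1}(1-2\tan\Phi_*)$ (as in \eqref{eqn: lower bound for generalized curvature at arbitrary point} of Lemma \ref{lem: the curve is outside the osculating circle}'s proof, by picking the point $\tilde s$ where $X'$ makes a right angle with the chord to $X(s)$), combined with the upper bound $d(\tilde s)\leq d_*\leq 2R_X\cot(\tfrac{\pi}{4}-\tfrac{\Phi_*}{2})$. After some universal $t_0$ this yields $(d/dt)[\kappa_-R_X]\geq C[\tilde h(\Phi_*)-\kappa_-R_X]_+$ with $\tilde h(\Phi_*)\to 1$ exponentially, giving the desired bound.

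The main obstacle, and what makes this delicate rather than routine, is converting the nonpositive integrand in \eqref{eqn: eqn for curvature} into a quantitative dissipation rate: the bracket is small on the set where the curve is close to $X(s)$ (precisely the region contributing most to the kernel), so the logarithmic divergence from the co-area decomposition has to be tracked carefully on the set $A$, both in the easy regime $d(s)\kappa_+\gg 1$ and in the borderline regime $d(s)\kappa_+\sim 3$, where the graph representation $\Im Y=f(\Re Y)$ with $|f'|\leq \tan\Phi_*$ from Lemma \ref{lem: the curve is outside the osculating circle} is needed. The second delicate point is the two-phase integration of the ODE: the crude bound $\exp[C(\int_0^t\cos 2\Phi_*)^{-1}]$ dominates for small $t$ and must be patched, at a universal $t_0$, to the sharper exponential approach to $1$ driven by the decay of $\Phi_*(t)$ from Proposition \ref{prop: bound for the max |Phi|}. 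Finally, $\kappa_*(t)R_X\geq 1$ is the classical lower bound on the maximum curvature of a closed curve by the inverse of the area-equivalent radius, which I would cite from \cite{MR0107214,Pankrashkin2015AnIF} to close (iii).
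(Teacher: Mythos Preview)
Your proposal is correct and follows essentially the same route as the paper: the same set $A$ (and its mirror $\tilde A$), the same Cauchy--Schwarz/co-area lower bound with the graph representation $\Im Y=f(\Re Y)$ to handle the borderline regime $d(s)\kappa_+\sim 2$--$3$, the same two-phase ODE integration (Bernoulli-type for small $t$, then linear relaxation to $\tan(\tfrac{\pi}{4}+\tfrac{\Phi_*}{2})$ for $t\geq t_0$), and the same citation for $\kappa_*R_X\geq 1$. The only minor slip is notational: in your description of the improved lower bound for $\Im[I(\tilde s,s')/|X'(\tilde s)|]$ you conflate the curvature minimizer $\tilde s$ with the auxiliary right-angle point from the proof of Lemma \ref{lem: the curve is outside the osculating circle}; these are different points, but the argument you describe is the intended one.
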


We conclude this section by briefly remarking on the case of general elasticity.
We follow the setup in Section \ref{sec: general elasticity}.
\begin{rmk}
\label{rmk: general elasticity curvature}
Assume that $X$ solves \eqref{eqn: contour dynamic equation complex form general tension} in Section \ref{sec: general elasticity} with $X(0,t) = X_0(s)$, and satisfies the assumptions \ref{assumption: geometry}-\ref{assumption: non-degeneracy} in Section \ref{sec: preliminary}.
Then the claims in Proposition \ref{prop: max principle and decay estimate for curvature} other than the quantitative bounds should still hold, i.e., when $\Phi_*(0)<\f{\pi}{4}$, $\ka(s,t)$ satisfies extreme principles as in Proposition \ref{prop: max principle and decay estimate for curvature}.

The justification is similar to that in Section \ref{sec: general elasticity}, which we will only sketch.
Fix $t$, and let $k_0$, $\xi =\xi(s)$ and $Y(\xi,t)$ be defined as in Section \ref{sec: general elasticity}.
By \eqref{eqn: time derivative of alpha change of variables}, 
\begin{align*}
\pa_t\Im \f{X''(s,t)}{X'(s,t)}
= &\; \pa_s\left[\Im\f{\pa_t X'(s,t)}{X'(s,t)}\right]
= k_0 \xi'(s) \cdot
\left.\pa_\xi \left[\Im \f{\pa_\tau Y'(\xi,\tau)}{Y'(\xi,\tau)}\right]\right|_{(\xi,\tau) = (\xi(s),t)}\\
= &\; k_0 \xi'(s) \cdot
\left.\pa_\tau \left[\Im \f{\pa_\xi Y'(\xi,\tau)}{Y'(\xi,\tau)}\right]\right|_{(\xi,\tau) = (\xi(s),t)}.
\end{align*}
By \eqref{eqn: def of curvature s variable}, this implies
\beq
\begin{split}
&\; |X'(s,t)|\partial_t\kappa(s,t)
+\kappa(s,t)\partial_t|X'(s,t)| \\
= &\; k_0 \xi'(s)
\big(|Y'(\xi,\tau)|\partial_\tau\kappa_Y(\xi,\tau)
+\kappa_Y(\xi,\tau)\partial_\tau|Y'(\xi,\tau)| \big)\big|_{(\xi,\tau) = (\xi(s),t)},
\end{split}
\label{eqn: identity between two representations of curvature}
\eeq
where $\ka_Y  = \ka_Y(\xi,\tau)$ is the curvature defined in terms of $Y(\xi,\tau)$ by \eqref{eqn: def of curvature s variable}.
Note that \eqref{eqn: def of curvature s variable} holds for any non-degenerate parameterization of the curve.
Using the fact that $\xi'(s)>0$ is real-valued,
\begin{align*}
\kappa(s)= &\;
\f{\Im[X''(s)/X'(s)]}{|X'(s)|}
=\f{1}{|Y'(\xi(s))|\xi'(s)}\Im\left[\f{\pa_s (Y'(\xi(s))\xi'(s))}{Y'(\xi(s))\xi'(s)}\right]\\
= &\; \f{1}{|Y'(\xi(s))|\xi'(s)}\Im\left[\f{Y''(\xi(s))\xi'(s)^2 + Y'(\xi(s))\xi''(s)}{Y'(\xi(s))\xi'(s)}\right]\\
= &\;
\f{1}{|Y'(\xi(s))|}
\Im\left[\f{Y''(\xi(s))}{Y'(\xi(s))}\right] = \ka_Y(\xi(s)).
\end{align*}
This should be expected since the curvature does not depend on the parameterization of the curve.
Moreover, by \eqref{eqn: relation between time derivatives of X and Y},
\begin{align*}
\partial_t|X'(s,t)|
= &\; \f{\Re[\overline{X'(s,t)}\pa_t X'(s,t)]}{|X'(s,t)|}\\
= &\; \f{\Re[\overline{Y'(\xi(s),t)} \xi'(s) \cdot k_0\pa_\tau Y'(\xi(s),t)\xi'(s)]}{|Y'(\xi(s),t)|\xi'(s)}
= k_0 \xi'(s)\partial_\tau|Y'(\xi(s),t)|.
\end{align*}
Plugging them into \eqref{eqn: identity between two representations of curvature} and using the identity $X'(s,t) = Y'(\xi(s),t)\xi'(s)\neq 0$, we obtain that
\beqo
\partial_t\kappa(s,t)
= k_0 \partial_\tau\kappa_Y (\xi(s),t).
\eeqo
Then we can justify the desired assertion as in the proof of Proposition \ref{prop: max principle and decay estimate for curvature}.

\end{rmk}

\section{Estimates for $|X'|$}
\label{sec: estimates for |X'|}
In this section we study $|X'|$, which encodes the stretching information of the elastic string.
Recall that $|X'(s,t)|$ solves \eqref{eqn: eqn for |X'|}, and by the assumptions \ref{assumption: geometry}-\ref{assumption: non-degeneracy}, it is $C^1$ in the space-time.

\subsection{$L^1$-, $L^2$-, and $L^\infty$-estimates}
Recall that $\CL$ and $\CE$ be defined in \eqref{eqn: length} and \eqref{eqn: energy}, respectively.

\begin{lem}
\label{lem: energy estimate}
$X$ satisfies the length estimate
\[
\f{d \CL(t)}{dt} 
= -\f{1}{4\pi} \int_{\BT}
\int_{\T} \frac{|X'(s)||X'(s')|^2}{|X(s')-X(s)|^2}\left[\cos \Phi(s',s)-\cos 2\Phi(s',s)\right] ds'\,ds,
\]
and the energy estimate
\beq
\begin{split}
&\;\f{d \CE(t)}{dt} \\ 
=
&\; -\f{1}{16\pi} \int_{\BT}
\int_{\T} |J(s,s')| \left(|X'(s)|-|X'(s')|\right)^2 \left[\cos \Phi(s',s)+\cos 2 \Phi(s',s)\right] ds'\,ds\\
&\; -\f{1}{16 \pi} \int_{\BT}
\int_{\T} |J(s,s')| \big(|X'(s)| + |X'(s')| \big)^2
\left[\cos \Phi(s',s)-\cos 2 \Phi(s',s)\right] ds'\,ds\\
=
&\; -\f{1}{8\pi} \int_{\BT}
\int_{\T} \frac{|X'(s)||X'(s')|}{|X(s')-X(s)|^2} \left(|X'(s)|-|X'(s')|\right)^2 \cos \Phi(s',s)\, ds'\,ds\\
&\; -\f{1}{4\pi} \int_{\BT}
\int_{\T} \frac{|X'(s)|^2|X'(s')|^2}{|X(s')-X(s)|^2}
\left[\cos \Phi(s',s)-\cos 2 \Phi(s',s)\right] ds'\,ds.
\end{split}
\label{eqn: energy estimate}
\eeq
As a result, if $\Phi_*(0)< \pi/4$, both $\CL(t)$ and $\CE(t)$ are non-increasing in time.

\begin{proof}
Integrating \eqref{eqn: eqn for |X'|} yields
\begin{align*}
&\;\f{d}{dt}\int_{\BT} |X'(s)|\,ds\\
= &\; \f{1}{4\pi} \int_{\BT}
\pv\int_{\T} \frac{|X'(s)||X'(s')|}{|X(s')-X(s)|^2}\left[|X'(s')|\cos 2\Phi(s',s)-|X'(s)|\cos \Phi(s',s)\right] ds'\,ds \\
= &\; -\f{1}{4\pi} \int_{\BT}
\int_{\T} \frac{|X'(s)||X'(s')|^2}{|X(s')-X(s)|^2}\left[\cos \Phi(s',s)-\cos 2\Phi(s',s)\right] ds'\,ds.
\end{align*}
In the last line, we exchanged the $s$- and $s'$-variables.
\eqref{eqn: energy estimate} can be derived analogously by multiplying \eqref{eqn: eqn for |X'|} by $|X'(s)|$ and taking integral on $\BT$.
\end{proof}
\end{lem}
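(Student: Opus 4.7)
The plan is to derive both identities directly from the pointwise evolution \eqref{eqn: eqn for |X'|} for $|X'(s,t)|$, integrate in $s$, and then symmetrize in $(s,s')$ using $\Phi(s,s')=\Phi(s',s)$ together with the symmetry of $|X(s)-X(s')|$. The monotonicity assertion will then reduce to a sign check that exploits the hypothesis $\Phi_*(0)<\pi/4$ and Proposition \ref{prop: bound for the max |Phi|}.

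For the length estimate, I would differentiate $\CL(t)=\int_\BT|X'(s,t)|\,ds$ under the integral and substitute \eqref{eqn: eqn for |X'|}. This yields a double principal-value integral whose integrand is $\frac{|X'(s)||X'(s')|}{|X(s)-X(s')|^2}\bigl[|X'(s')|\cos 2\Phi(s',s)-|X'(s)|\cos\Phi(s',s)\bigr]$. The $\cos 2\Phi$ piece is already symmetric under $s\leftrightarrow s'$; swapping dummy variables in the $\cos\Phi$ piece turns $|X'(s)|^2|X'(s')|$ into $|X'(s)||X'(s')|^2$, and combining produces the stated formula. The principal value may be dropped once the two terms are combined, because a Taylor expansion of $X$ at the diagonal shows $\Phi(s,s')=O((s-s')^2)$, so $\cos\Phi-\cos 2\Phi=\tfrac32\Phi^2+O(\Phi^4)=O((s-s')^4)$, comfortably absorbing the $|X(s)-X(s')|^{-2}\sim(s-s')^{-2}$ singularity.

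For the energy estimate, I would use $|X'|\partial_t|X'|=\tfrac12\partial_t|X'|^2$ to write $d\CE/dt=\int_\BT|X'(s)|\partial_t|X'(s)|\,ds$, substitute \eqref{eqn: eqn for |X'|}, and symmetrize by averaging the resulting expression with its image under $s\leftrightarrow s'$. The symmetrized integrand becomes $\frac{1}{8\pi}|J(s,s')|\bigl\{2|X'(s)||X'(s')|\cos 2\Phi-(|X'(s)|^2+|X'(s')|^2)\cos\Phi\bigr\}$. Setting $a=|X'(s)|$, $b=|X'(s')|$ and applying the elementary identities $2ab=\tfrac12[(a+b)^2-(a-b)^2]$ and $a^2+b^2=\tfrac12[(a+b)^2+(a-b)^2]$, then collecting by $(a+b)^2$ and $(a-b)^2$, produces the first claimed form of \eqref{eqn: energy estimate}. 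Re-expanding $(a\pm b)^2$ and using $|J|\cdot ab = |X'(s)|^2|X'(s')|^2/|X(s)-X(s')|^2$ together with $(a-b)^2+2ab=a^2+b^2$ rearranges it into the second form.

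Finally, the monotonicity assertion is immediate. By Proposition \ref{prop: bound for the max |Phi|}, the hypothesis $\Phi_*(0)<\pi/4$ propagates so that $|\Phi(\cdot,\cdot,t)|<\pi/4$ for all $t\ge 0$; hence $|2\Phi|\le\pi/2$ everywhere, and since $\cos$ is non-negative and decreasing on $[0,\pi/2]$, both $\cos\Phi-\cos 2\Phi$ and $\cos\Phi+\cos 2\Phi$ are non-negative pointwise. The integrands in the length identity and in the first form of the energy identity then have the correct sign, so $d\CL/dt\le 0$ and $d\CE/dt\le 0$. The only delicate point in the argument is justifying the symmetrization inside the principal value, which is where the diagonal vanishing $\Phi=O((s-s')^2)$ is essential; everything else is bookkeeping with trigonometric and algebraic identities.
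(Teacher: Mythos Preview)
Your proposal is correct and follows essentially the same approach as the paper: integrate \eqref{eqn: eqn for |X'|} (respectively, $|X'|$ times \eqref{eqn: eqn for |X'|}) over $\BT$ and symmetrize in $(s,s')$. The paper's own proof is terse and leaves the algebraic rearrangement for the energy identity to the reader; your derivation via $2ab=\tfrac12[(a+b)^2-(a-b)^2]$ and $a^2+b^2=\tfrac12[(a+b)^2+(a-b)^2]$ is exactly what is needed, and your diagonal analysis justifying the removal of the principal value is a detail the paper does not spell out. One minor remark: for that last point you only need $\Phi(s,s')=O(|s-s'|)$, since $\cos\Phi-\cos 2\Phi=O(\Phi^2)$ already cancels the $|X(s)-X(s')|^{-2}$ singularity; your stronger claim $\Phi=O((s-s')^2)$ is true under the standing $C^3$ assumptions but is not required.
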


\begin{lem}
\label{lem: max principle for |X'|}

If $\Phi_*(0)<\pi/4$, $|X'(s,t)|$ satisfies a maximum principle, i.e., $\max_s|X'(s,t)|$ is non-increasing.

\begin{proof}
Take an arbitrary $t$.
Suppose $\max_s |X'(s,t)|$ is attained at some $s\in \BT$.
Since $|\Phi_*(t)|<\f{\pi}{4}$ by Proposition \ref{prop: bound for the max |Phi|}, $\cos \Phi(s,s') \geq \cos 2\Phi(s,s') \geq 0$, so $|X'(s')|\cos 2\Phi(s',s)\leq |X'(s)|\cos \Phi(s',s)$.
By \eqref{eqn: eqn for |X'|}, $\pa_t |X'(s)|\leq 0$.
Following the argument in Proposition \ref{prop: bound for the max |Phi|}, we conclude that $|X'|$ enjoys the maximum principle.
\end{proof}
\end{lem}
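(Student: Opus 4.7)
The plan is to apply the evolution equation \eqref{eqn: eqn for |X'|} at a point where $|X'(\cdot,t)|$ attains its maximum, and exploit the elementary inequality $\cos\Phi\geq \cos 2\Phi\geq 0$, which holds whenever $|\Phi|\leq \pi/4$. The hypothesis $\Phi_*(0)<\pi/4$ together with Proposition \ref{prop: bound for the max |Phi|} guarantees that $|\Phi(s,s',t)|<\pi/4$ for all $t\in[0,T]$, so both inequalities are available pointwise throughout the existence interval.

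Fix $t$ and let $s_0\in\BT$ attain $M(t):=\max_s|X'(s,t)|$. Since $|X'(s')|\leq |X'(s_0)|$ for every $s'$, the bracketed factor in \eqref{eqn: eqn for |X'|} is bounded above by
\[
|X'(s')|\cos 2\Phi(s',s_0)-|X'(s_0)|\cos\Phi(s',s_0)\leq |X'(s_0)|\bigl[\cos 2\Phi(s',s_0)-\cos\Phi(s',s_0)\bigr]\leq 0,
\]
while the weight $|X'(s')|/|X(s')-X(s_0)|^2$ is non-negative. Hence the principal value integral in \eqref{eqn: eqn for |X'|} is non-positive — the apparent singularity at $s'=s_0$ is harmless because the bracket vanishes to at least first order there under \ref{assumption: geometry}--\ref{assumption: non-degeneracy} — and therefore $\pa_t|X'(s_0,t)|\leq 0$ at every maximizer.

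To promote this pointwise statement at a moving extremum to monotonicity of $M(t)$, I would invoke the same envelope argument already used in Proposition \ref{prop: bound for the max |Phi|}, in the style of \cite{cordoba2009maximum}. Since $|X'|\in C^1(\BT\times[0,T])$ by the standing assumptions, $M(t)$ is Lipschitz and hence differentiable almost everywhere; at any such differentiability point one has $M'(t)=\pa_t|X'(s_0(t),t)|\leq 0$ for any choice of maximizer $s_0(t)$, and integrating yields the claim. The only step requiring any care is this last envelope step, but since the machinery is identical to that used earlier for $\Phi_*(t)$, the proof should essentially reduce to quoting it; the algebraic heart of the matter — the non-negativity of $\cos\Phi-\cos 2\Phi$ on $[-\pi/4,\pi/4]$ — is immediate.
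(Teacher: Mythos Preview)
Your proof is correct and follows essentially the same approach as the paper: evaluate \eqref{eqn: eqn for |X'|} at a maximizer, use $\cos\Phi\geq\cos 2\Phi\geq 0$ (valid since $\Phi_*(t)<\pi/4$ by Proposition \ref{prop: bound for the max |Phi|}) together with $|X'(s')|\leq |X'(s_0)|$ to make the integrand non-positive, and then quote the envelope argument from Proposition \ref{prop: bound for the max |Phi|}. Your write-up is in fact slightly more explicit than the paper's about why the principal value causes no trouble and about the Lipschitz/a.e.\ differentiability step.
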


\subsection{The lower bound and the well-stretched condition}
To derive a lower bound for $|X'|$, we first prove a few auxiliary lemmas.

\begin{lem}
\label{lem: change of variable to the arclength parameter}
For any $\d\in [0,\CL(t)/2]$ and continuous $f:[0,+\infty)\to \BR$,
it holds
\[
\int_{\{L(s,s')\geq \d\}}f\big(L(s,s')\big)|X'(s')|\, ds' = 2\int_{\d}^{\mathcal{L}(t)/2}f(x)\, dx.
\]

\begin{proof}
By the definition of $\CL(t)$, there exists $s_*\in (s,s+2\pi)$ such that
\[
\int_{s}^{s_*}|X'(s')|\,ds'
=\int_{s_*-2\pi}^{s}|X'(s')|\,ds'=\f12 \CL(t).
\]
Then
\[
L(s,s')=
\begin{cases}
\int_{s}^{s'}|X'(s'')|\,ds'' & \mbox{if } s'\in[s,s_*), \\
\int_{s'}^{s}|X'(s'')|\,ds'' & \mbox{if } s'\in(s_*-2\pi,s).
\end{cases}
\]
The desired claim then follows from a change of variable.
\end{proof}
\end{lem}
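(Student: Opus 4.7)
The plan is to reduce the integral on $\BT$ to two copies of an integral on an interval by changing to the arclength variable. For fixed $s\in \BT$, the curve $X(\BT,t)$ is traversed once as $s'$ runs over $\BT$, with total length $\CL(t)$. The shorter-arc convention in the definition of $L(s,s')$ splits the parameter domain into two halves of equal arclength, on each of which the one-sided arclength gives a monotone bijection onto $[0,\CL(t)/2]$.

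First I would produce the antipodal parameter. Since $s'\mapsto \int_{s}^{s'}|X'(s'')|\,ds''$ is continuous and strictly increasing from $0$ to $\CL(t)$ as $s'$ goes from $s$ to $s+2\pi$, there is a unique $s_*\in (s,s+2\pi)$ at which this integral equals $\CL(t)/2$; automatically $\int_{s_*-2\pi}^{s}|X'(s'')|\,ds''=\CL(t)/2$ as well. For $s'\in [s,s_*)$ the forward arclength from $s$ to $s'$ is at most $\CL(t)/2$, hence it is the shorter one, so $L(s,s')=\int_{s}^{s'}|X'(s'')|\,ds''$; analogously for $s'\in (s_*-2\pi,s)$, $L(s,s')=\int_{s'}^{s}|X'(s'')|\,ds''$.

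Next, on each half I apply the substitution $x=L(s,s')$. The Jacobian is $dx=|X'(s')|\,ds'$ on the forward half, and $dx=-|X'(s')|\,ds'$ on the backward half with the sign absorbed by reversing the orientation of integration. The preimage of $[\d,\CL(t)/2]$ under either substitution is exactly the subset of that half on which $L(s,s')\geq \d$. Each half therefore contributes $\int_{\d}^{\CL(t)/2}f(x)\,dx$, and summing gives the factor of $2$.

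The only mild subtlety is handling the single antipodal point $s'=s_*$, where both forward and backward arclengths equal $\CL(t)/2$; since this is a measure-zero set and $f$ is continuous, it contributes nothing and the formula is unaffected. There is nothing deep here: the lemma is essentially just the arclength change of variables, with the factor $2$ encoding that the curve is wrapped around each point $s$ twice (once in each direction) when one passes to the shorter-arc distance.
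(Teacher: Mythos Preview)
Your proof is correct and follows essentially the same approach as the paper: introduce the antipodal parameter $s_*$ splitting $\BT$ into two arcs of equal length, identify $L(s,s')$ with the one-sided arclength on each arc, and perform the change of variable $x=L(s,s')$ on each half to obtain two copies of $\int_\d^{\CL(t)/2}f(x)\,dx$. Your write-up is in fact more explicit than the paper's, which simply states ``the desired claim then follows from a change of variable'' after setting up $s_*$ and the piecewise description of $L(s,s')$.
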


\begin{lem}
\label{lem: bound Phi in terms of kappa}
For any $s,s'\in \BT$, $ |\Phi(s,s')|\leq \kappa_*L(s,s')$.
\begin{proof}
Without loss of generality, we assume $s'\in (s,s+2\pi)$ and $L(s,s')=\int_s^{s'}|X'(s'')|\,ds''$.
Since $\Im\,\frac{X(\tau)}{X(s)-X(s')}\big|_{\tau=s}^{s'}=0$, by Rolle's theorem, there exists $s_*\in(s,s')$ such that $\Im \,\f{(s-s')X'(s_*)}{X(s)-X(s')}=0$, which implies $\f{X'(s_*)}{X(s)-X(s')}\in \BR\setminus \{0\}$.
Hence,
\[
\Phi(s,s')
=\arg \frac{X'(s)X'(s')}{(X(s)-X(s'))^2}
=\arg \frac{X'(s)X'(s')}{X'(s_*)^2} =\alpha(s)+\alpha(s')-2\alpha(s_*),
\]
where the equalities are understood in the modulo $2\pi$.
Since $\al' = \ka|X'|$ by \eqref{eqn: def of curvature s variable}, we have that
\begin{align*}
|\Phi(s,s')|
\leq &\; |\alpha(s)-\alpha(s_*)|_{\T}+|\alpha(s')-\alpha(s_*)|_{\T}\\
\leq &\; \int_s^{s_*} |\kappa(s'')X'(s'')|\,ds''+\int_{s_*}^{s'} |\kappa(s'')X'(s'')|\,ds''\\
\leq &\; \kappa_*\int_s^{s'}|X'(s'')|ds''
=\kappa_* L(s,s').
\end{align*}
Here $|\cdot|_\BT$ denotes the distance on $\BT$.
\end{proof}
\end{lem}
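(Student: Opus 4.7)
The plan is to locate an intermediate parameter $s_*$ between $s$ and $s'$ at which the tangent vector $X'(s_*)$ is parallel to the chord $X(s')-X(s)$. Once such an $s_*$ is found, we will have $\arg(X(s)-X(s'))^2 \equiv 2\alpha(s_*)$ mod $2\pi$, and hence
\[
\Phi(s,s') \equiv \alpha(s)+\alpha(s')-2\alpha(s_*) \pmod{2\pi}.
\]
The bound will then follow by writing the right-hand side as $(\alpha(s)-\alpha(s_*)) + (\alpha(s')-\alpha(s_*))$ and using $\alpha'(s'')=\kappa(s'')|X'(s'')|$ (which follows from \eqref{eqn: def of curvature s variable} together with \eqref{eqn: a relation for curvature}) to estimate each factor by an arclength integral.

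First I would reduce to the case $s'\in(s,s+2\pi)$ with $L(s,s')=\int_s^{s'}|X'(s'')|\,ds''$; the other orientation is symmetric. To produce $s_*$, I apply Rolle's theorem to the real-valued $C^1$ function
\[
\tau \mapsto \Im\frac{X(\tau)}{X(s)-X(s')} \qquad \text{on }[s,s'],
\]
whose values at $\tau=s$ and $\tau=s'$ differ by $\Im\!\left(\frac{X(s)-X(s')}{X(s)-X(s')}\right)=0$. This yields $s_*\in(s,s')$ with $\Im\frac{X'(s_*)}{X(s)-X(s')}=0$; since $X'(s_*)\ne 0$ by \ref{assumption: non-degeneracy}, the quotient is a nonzero real number, giving the desired $\arg$-identity.

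Next I would pass from the identity modulo $2\pi$ to a genuine inequality for $|\Phi(s,s')|$ measured in $\BT$. Writing the distance on $\BT$ as $|\cdot|_\BT$, we estimate
\[
|\Phi(s,s')|
\leq |\alpha(s)-\alpha(s_*)|_\BT + |\alpha(s')-\alpha(s_*)|_\BT
\leq \int_s^{s_*}|\alpha'(s'')|\,ds'' + \int_{s_*}^{s'}|\alpha'(s'')|\,ds'',
\]
where the triangle inequality for the $\BT$-distance, combined with the fact that each $|\alpha(\cdot)-\alpha(s_*)|_\BT$ is no larger than the total variation of $\alpha$ along the corresponding subinterval, validates the second step. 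Substituting $|\alpha'|=|\kappa|\cdot|X'|\le \kappa_*|X'|$ and combining the two integrals gives $\kappa_* L(s,s')$.

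The only subtlety is the triangle-inequality step for angles on $\BT$, which is the natural bound when the individual variations are small but requires comment when $\kappa_* L(s,s')\geq \pi$; in that regime the estimate $|\Phi|\le \pi \le \kappa_* L(s,s')$ is automatic from the fact that $\Phi$ is $\BT$-valued, so the lemma holds trivially there, and no further work is needed. This is the main (and essentially only) obstacle, and it is handled by splitting into the two regimes.
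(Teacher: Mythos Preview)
Your proof is correct and follows essentially the same route as the paper: the same Rolle's theorem argument to produce $s_*$ with $X'(s_*)$ parallel to the chord, the same identity $\Phi(s,s')\equiv \alpha(s)+\alpha(s')-2\alpha(s_*)$ mod $2\pi$, and the same bound via $\alpha'=\kappa|X'|$. Your extra remark on the regime $\kappa_* L(s,s')\geq \pi$ is a harmless precaution; in fact the triangle inequality $|x+y|_\BT\leq |x|_\BT+|y|_\BT$ holds unconditionally on $\BT$, so no case split is actually required.
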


\begin{prop}
\label{prop: lower bound for |X'| and the well-stretched condition}
Suppose $\Phi_*(0)<\pi/4$.
For some universal constant $\b>0$, it holds
\beq
\min_s|X'(s,t)| \geq R_X \exp\left[-2\coth\left(\f{\b}{ 2} \int_0^t \cos 2\Phi_*(\tau)\,d\tau\right)\right]
\label{eqn: lower bound for |X'|}
\eeq
for all $t\in (0,T]$.
In particular, by Proposition \ref{prop: bound for the max |Phi|}, the lower bound is positive and strictly increasing for $t>0$.
As a result, for any $t>0$, $X(\cdot,t)$ satisfies the well-stretched condition: more precisely, for distinct $s,s'\in \BT$,
\[
|X(s,t)-X(s',t)|\geq |s-s'|_\BT \cdot C\min_s|X'(s,t)|,
\]
where $C$ is a universal constant.

\begin{proof}
The proof is similar to that of Lemma 4.1 in \cite{Tong2022GlobalST}.
We proceed in several steps.

\setcounter{step}{0}
\begin{step}
Fix $t$.
Assume that $\min_s |X'(s,t)|$ is attained at $s\in\BT$.
We only consider the case $|X'(s)|\leq R_X$.
With $\d\leq\CL(t)/2$ to be determined, we derive that
\begin{align*}
&\; \pv \int_\BT \frac{|X'(s')|(|X'(s')|- |X'(s)|)}{|X(s')-X(s)|^2} \, ds'
\geq \int_{\{L(s,s')\geq \d\}} \frac{|X'(s')|(|X'(s')|- |X'(s)|)}{L(s,s')^2} \, ds'\\
\geq &\; \left(\int_{\{L(s,s')\geq \d\}} \frac{|X'(s')|^2}{L(s,s')^2} \, ds'\right)
\left(\f{1}{2\pi}\int_{\{L(s,s') \geq \d\}} 1\, ds'\right)
-|X'(s)| \int_{\{L(s,s')\geq \d\}} \frac{|X'(s')|}{L(s,s')^2} \, ds'\\
\geq &\; \f1{2\pi} \left(\int_{\{L(s,s')\geq \d\}}
\f{ |X'(s')|} {L(s,s')}\,ds' \right)^2
-|X'(s)| \int_{\{L(s,s')\geq \d\}} \frac{|X'(s')|}{L(s,s')^2} \, ds'.
\end{align*}
In the last step, we applied the Cauchy-Schwarz inequality.
By Lemma \ref{lem: change of variable to the arclength parameter},
\[
\pv \int_\BT \frac{|X'(s')|(|X'(s')|- |X'(s)|)}{|X(s')-X(s)|^2} \, ds'
\geq \f1{2\pi} \left( 2\ln \f{\CL(t)/2}{\d}
\right)^2
-2|X'(s)| \d^{-1}.
\]
Taking $\d = \pi|X'(s)|\leq \CL(t)/2$ and applying the isoperimetric inequality $\CL(t) \geq 2\pi R_X$, we obtain
\beq
\pv \int_\BT \frac{|X'(s')|(|X'(s')|-|X'(s)|)}{|X(s')-X(s)|^2} \, ds'
\geq \f{2}{\pi} \left( \ln \f{R_X}{|X'(s)|}\right)^2 -\f{2}{ \pi}.
\label{eqn: restoring term in |X'| fractional Laplacian}
\eeq
On the other hand, by Proposition \ref{prop: geometric property of the curve}, Lemma \ref{lem: change of variable to the arclength parameter}, and Lemma \ref{lem: bound Phi in terms of kappa},
\beq
\begin{split}
&\; \int_\BT \frac{|X'(s')|}{|X(s')-X(s)|^2}
\left[\cos \Phi(s',s)- \cos 2\Phi(s',s)\right] ds'\\
\leq &\; C\int_\BT \frac{|X'(s')| \sin^2\Phi(s',s)}{L(s,s')^2}
\, ds'\\
\leq &\; C\int_{\T} \f{|X'(s')|(\min\big\{\ka_* L(s,s'),\,\sin \Phi_*\big\})^2}{L(s,s')^2} \, ds'\\
= &\; 2C\int_{0}^{\mathcal{L}(t)/2} \f{(\min\big\{\ka_* x,\,\sin \Phi_*\big\})^2}{x^2} \, dx
\leq  C\ka_* \sin \Phi_*,
\end{split}
\label{eqn: positive term due to nonzero Phi}
\eeq
where $C$ is universal.
Combining \eqref{eqn: eqn for |X'|}, \eqref{eqn: restoring term in |X'| fractional Laplacian} and \eqref{eqn: positive term due to nonzero Phi}, at $s\in \BT$,
\beqo
\begin{split}
&\;\pa_t\left( \ln\f{ |X'(s)|}{R_X}\right)\\
= &\; \f{1}{4\pi}
\pv \int_\BT \frac{|X'(s')|(|X'(s')|- |X'(s)|)}{|X(s')-X(s)|^2} \cos 2\Phi(s',s) \, ds'\\
&\;- |X'(s)| \cdot \f{1}{4\pi}
\pv \int_\BT \frac{|X'(s')|}{|X(s')-X(s)|^2}
\left[\cos \Phi(s',s)- \cos 2\Phi(s',s)\right] ds'\\
\geq &\;\f1{2\pi^2}\cos 2\Phi_* \left(\left|\ln \f{|X'(s)|}{ R_X}\right|^2- 1\right) - C |X'(s)| \ka_* \sin \Phi_* .
\end{split}
\eeqo

Since $X'(s,t)$ is $C^1$ in the space-time, $\min_s |X'(s,t)|$ is a Lipschitz function in $t$.
Let
\[
\Lam(t) := \f{ \min_s|X'(s,t)|}{R_X}.
\]
Recall that Proposition \ref{prop: max principle and decay estimate for curvature} implies
\beq
\ka_*(t)R_X \leq 1+C_* \exp\left[C_* \left(\int_0^t \cos 2\Phi_*(\tau)\,d\tau\right)^{-1} -c_*t\right] =: K(t),
\label{eqn: def of K}
\eeq
where $C_*,c_*>0$ are universal constants.
Then we argue as in Proposition \ref{prop: bound for the max |Phi|} to find that  
\beq
\pa_t \ln \Lam(t)
\geq \f1{2\pi^2}\cos 2\Phi_* \left[
\big|\ln \Lam(t)\big|^2- 1
- C_\dag\Lam(t) K(t)\cdot  \f{\sin \Phi_*}{\cos 2\Phi_*}\right] =: F(\ln \Lam(t),t)
\label{eqn: ODE for lower bound for |X'|}
\eeq
for almost every $t$.
Here $C_\dag$ is a universal constant, and
\[
F(x,t) = \f1{2\pi^2}\cos 2\Phi_*(t) \left[
x^2- 1
- C_\dag e^x K(t)\cdot  \f{\sin \Phi_*(t)}{\cos 2\Phi_*(t)}\right].
\]
\end{step}

\begin{step}
We claim that, if we define for $t>0$
\beqo
\Lam_*(t)
:= \exp\left[-2\coth\left(\f{\b}{ 2} \int_0^t \cos 2\Phi_*(\tau)\,d\tau\right)\right]
\eeqo
with a sufficiently small but universal $\b$, it holds
\beq
\pa_t \ln \Lam_*(t) \leq  F(\ln\Lam_*(t),t).
\label{eqn: differential inequality for ln Lam_*}
\eeq

We first show that
\beq
C_\dag\Lam_*(t) K(t) \cdot \sin \Phi_*
\leq \f34\big|\ln\Lam_*(t)\big|^2\cos 2\Phi_*.
\label{eqn: an intermediate inequality}
\eeq
Indeed, it is not difficult to verify that
\[
2\coth x
\geq 1 + x^{-1}\quad \forall\,x>0.
\]
Denote
\[
A(t): = \left(\int_0^t \cos 2\Phi_* (\tau)\,d\tau\right)^{-1}.
\]
Then by Proposition \ref{prop: bound for the max |Phi|} and \eqref{eqn: def of K},
\begin{align*}
C_\dag \Lam_*(t) K(t) \cdot \sin \Phi_*
\leq &\; C_\dag\exp\left[-2\coth\left(\f\b 2 A(t)^{-1}\right)\right] \left[1+C_* \exp\big(C_* A(t)\big)\right] \cdot Ce^{-t/\pi^2}\\
\leq &\; C \exp\left[- 1 - \f{2}{\b} A(t)\right] \exp\big(C_* A(t)\big) \cdot Ce^{-t/\pi^2}\\
\leq &\; \tilde{C}_*\exp\left[\big(C_*-2\b^{-1}\big) A(t)\right]\cdot e^{-t/\pi^2},
\end{align*}
where $C$ and $\tilde{C}_*$ are universal constants.
Assuming $2\b^{-1}\geq C_*$ yields
\[
C_\dag\Lam_*(t) K(t) \cdot \sin \Phi_*
\leq
\tilde{C}_* e^{-t/\pi^2}.
\]
On the other hand, since $\Phi_*(t)$ is decreasing in $t$,
\[
A(t)\geq \big[t\cos 2\Phi_*(t)\big]^{-1} \geq t^{-1},
\]
so
\[
\f34\big|\ln\Lam_*(t)\big|^2\cos 2\Phi_*
=
\f34\left[2\coth\left(\f{\b}{2}A(t)^{-1}\right)\right]^2
\cos 2\Phi_*
\geq \f34\left[1+\f{ 2}{\b}A(t)\right]^2\cos 2\Phi_*
\geq 3\b^{-2} t^{-2}.
\]
By choosing a smaller (but still universal) $\b$ if necessary, we can guarantee $\tilde{C}_* e^{-t/\pi^2}\leq 3\b^{-2} t^{-2}$ for all $t>0$.
Combining these estimates, \eqref{eqn: an intermediate inequality} is proved.
Without loss of generality, we assume $\b \leq 1/(2\pi^2)$.

Observe that $\Lam_*(t)$ is an increasing continuous function for $t>0$, satisfying that
\beqo
\pa_t \ln \Lam_*(t)
= \b \cos 2\Phi_* \left[
\f14 \big|\ln \Lam_*(t)\big|^2- 1\right],\quad \lim_{t\to 0^+} \ln \Lam_*(t) = -\infty.
\eeqo
By the assumption $\b \leq 1/(2\pi^2)$ and \eqref{eqn: an intermediate inequality},
\beqo
\begin{split}
\pa_t \ln \Lam_*(t)
%
%
\leq &\; \f{1}{2\pi^2}\cos 2\Phi_*
\left[\big|\ln \Lam_*(t)\big|^2- 1 - \f34\big|\ln \Lam_*(t)\big|^2\right]\\
\leq &\; \f{1}{2\pi^2}\cos 2\Phi_*
\left[ \big|\ln \Lam_*(t)\big|^2- 1 - C_\dag \Lam_*(t) K(t)\cdot  \f{\sin \Phi_*}{\cos 2\Phi_*}\right] = F(\ln\Lam_*(t),t).
\end{split}
\eeqo
\end{step}

\begin{step}
Now we prove $\Lam(t) \geq \Lam_*(t)$ on $(0,T]$ by following the standard justification of comparison principles for ordinary differential equations.

With abuse of notations, denote
\[
t_0 := \sup\{ \tau\in (0,T]:\, \Lam_*(t)< \Lam(t)\mbox{ for all } t\leq \tau\}.
\]
By the continuity of $\Lam_*$ and $\Lam$ (see the assumptions \ref{assumption: geometry}-\ref{assumption: non-degeneracy} on $X$), $t_0\in (0,T]$ is well-defined.
If $t_0 = T$, the desired claim is proved due to the time continuity at $T$.
Suppose $t_0<T$.
Then $\ln\Lam(t) > \ln \Lam_*(t)$ for any $t<t_0$ and $\ln\Lam(t_0) = \ln \Lam_*(t_0)<0$ by continuity.
In a neighborhood of $(\ln \Lam(t_0),t_0)$, by virtue of the continuity of $\Phi_*(t)$ (see Proposition \ref{prop: bound for the max |Phi|}), one can show that $F(x,t)$ is continuous in $(x,t)$, Lipschitz continuous in $x$, and decreasing in $x$ since $\ln \Lam(t_0)<0$.
We denote the Lipschitz constant to be $L$.
By \eqref{eqn: ODE for lower bound for |X'|}, for all $t<t_0$,
\[
\ln \Lam(t)
\leq
\ln \Lam(t_0) - \int_t^{t_0} F(\ln \Lam(\tau),\tau)\,d\tau.
\]
Combining this with \eqref{eqn: differential inequality for ln Lam_*} and the fact that $\ln \Lam(t_0) = \ln \Lam_* (t_0)$, we obtain that, for $t<t_0$ with $|t-t_0|\ll 1$,
\begin{align*}
\ln \Lam(t) - \ln \Lam_*(t)
\leq &\; -\int_{t}^{t_0}\big( F(\ln\Lam(\tau),\tau) - F(\ln\Lam_*(\tau),\tau)\big)\,d\tau\\
\leq &\; L \int_{t}^{t_0} \big(\ln\Lam(\tau) - \ln\Lam_*(\tau)\big)\,d\tau.
\end{align*}
Then by the Gr\"onwall's inequality, we must have
$\ln \Lam(t) - \ln \Lam_*(t) \leq  0$ for $t$ satisfying $0<t_0-t\ll1$, which is a contradiction.
Therefore, $\Lam(t) \geq \Lam_*(t)$ for all $t\in (0,T]$, and this gives \eqref{eqn: lower bound for |X'|}.

Lastly, given distinct $s,s'\in \BT$, assume that $s<s'<s+2\pi$ and $L(s,s')$ is the length of the arc $X([s,s'],t)$ (otherwise, consider $X([s',s+2\pi],t)$ instead).
By Proposition \ref{prop: geometric property of the curve},
\begin{align*}
|X(s,t)-X(s',t)|
\geq &\; CL(s,s')
= C\int_{s}^{s'} |X'(s'',\tau)|\,ds'' \\
\geq &\; (s'-s) \cdot C\min_s|X'(s,t)|
\geq |s-s'|_\BT \cdot C\min_s|X'(s,t)|.
\end{align*}
\end{step}
\end{proof}
\end{prop}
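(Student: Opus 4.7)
The plan is to work at a point $s = s(t)$ where $|X'(s,t)|$ attains its minimum, derive a differential inequality for $\Lam(t):=\min_s|X'(s,t)|/R_X$ of Riccati-type in $\ln\Lam$, and then compare against an explicit barrier $\Lam_*(t)$ matching the claimed lower bound. The evolution \eqref{eqn: eqn for |X'|} naturally splits at the minimizer into a ``restoring'' piece involving $|X'(s')|^2-|X'(s)||X'(s')|$ times $\cos 2\Phi$, and a ``forcing'' piece proportional to $|X'(s)|(\cos\Phi-\cos 2\Phi)$. For the restoring piece I would drop the singular part using Proposition \ref{prop: geometric property of the curve}\ref{property: chord-arc} ($|X(s')-X(s)|\geq c L(s,s')$), then apply Cauchy--Schwarz and Lemma \ref{lem: change of variable to the arclength parameter} on the set $\{L(s,s')\geq \d\}$. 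Optimizing in the cutoff $\d=\pi|X'(s)|$ and using the isoperimetric inequality $\CL(t)\geq 2\pi R_X$ produces a lower bound of the form $\tfrac{2}{\pi}(\ln(R_X/|X'(s)|))^2 - \tfrac{2}{\pi}$, which gives the quadratic restoring term in $\ln(1/\Lam)$.

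For the forcing piece I would use Lemma \ref{lem: bound Phi in terms of kappa} ($|\Phi|\leq \kappa_* L$) together with the uniform bound $|\Phi|\leq \Phi_*$ to estimate $\sin^2\Phi(s',s)\leq (\min\{\kappa_* L,\sin\Phi_*\})^2$, and then split the arclength integral into the near region $L\leq \sin\Phi_*/\kappa_*$ and the far region $L\geq \sin\Phi_*/\kappa_*$; Lemma \ref{lem: change of variable to the arclength parameter} produces the clean estimate $\leq C\kappa_*\sin\Phi_*$. Plugging in the curvature upper bound $\kappa_*R_X\leq K(t)$ supplied by Proposition \ref{prop: max principle and decay estimate for curvature}, this yields the differential inequality
\[
\partial_t\ln\Lam(t) \geq \tfrac{1}{2\pi^2}\cos 2\Phi_*(t)\Big[(\ln\Lam(t))^2 - 1 - C_\dag \Lam(t)\,K(t)\,\tfrac{\sin\Phi_*(t)}{\cos 2\Phi_*(t)}\Big]
\]
holding at almost every $t$ (the minimum of a $C^1$ family being Lipschitz).

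I would then construct the barrier $\Lam_*(t):=\exp[-2\coth(\tfrac{\b}{2}\int_0^t \cos 2\Phi_*)]$. The algebraic identity $\partial_t\ln\Lam_*(t)=\b\cos 2\Phi_*\big(\tfrac14|\ln\Lam_*|^2-1\big)$ shows it satisfies a differential equation of the same flavor, with a $\tfrac14$ coefficient on the quadratic term. The key analytic step is to absorb the forcing: combining the elementary inequality $2\coth x\geq 1+x^{-1}$ on $(0,\infty)$ with the double exponential decay $\Phi_*(t)\leq C e^{-t/\pi^2}$ from Proposition \ref{prop: bound for the max |Phi|}, I would verify that for $\b$ sufficiently small but universal, $C_\dag\Lam_*(t)K(t)\sin\Phi_*\leq \tfrac34|\ln\Lam_*|^2\cos 2\Phi_*$. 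This crucially uses that $\Lam_*(t)\leq \exp(-2/\b\int_0^t\cos 2\Phi_*)$, which kills the $\exp(C_* A(t))$ growth in $K(t)$ when $\b<2/C_*$. Picking $\b\leq 1/(2\pi^2)$ in addition then gives $\partial_t\ln\Lam_*\leq F(\ln\Lam_*,t)$ where $F(\cdot,t)$ is the right-hand side above.

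Finally, a standard comparison argument finishes the proof: since $\ln\Lam_*(0^+)=-\infty\leq \ln\Lam(0^+)$ and $F(x,t)$ is Lipschitz and decreasing in $x$ on $\{x<0\}$, if $\ln\Lam$ and $\ln\Lam_*$ first touched at some time $t_0\in(0,T]$ one would get a contradiction by Gr\"onwall on $[t_0-\e,t_0]$. The well-stretched conclusion is then immediate from the chord-arc estimate \eqref{eqn: chord-arc condition}: $|X(s,t)-X(s',t)|\geq c L(s,s')\geq c|s-s'|_\BT \min_s|X'(s,t)|$. The main obstacle I anticipate is calibrating the constants carefully in the forcing absorption step: the competition between the double exponential decay of $\Phi_*$, the potential growth of $K(t)$ from the curvature estimate, and the singular behavior of $\Lam_*(t)$ as $t\to 0^+$ must all balance out, and this forces the particular $\coth$ ansatz rather than something simpler like $\exp(-Ct^{-1})$.
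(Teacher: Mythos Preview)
Your proposal is correct and follows the paper's proof essentially step for step: the same splitting at the minimizer, the same Cauchy--Schwarz argument with cutoff $\delta=\pi|X'(s)|$, the same $C\kappa_*\sin\Phi_*$ bound on the forcing via Lemma~\ref{lem: bound Phi in terms of kappa}, the same $\coth$ barrier, the same absorption via $2\coth x\geq 1+x^{-1}$, and the same comparison argument. One small slip: the chord-arc inequality $|X(s')-X(s)|\geq cL(s,s')$ is what you need for the \emph{forcing} piece (to upper-bound $|X(s')-X(s)|^{-2}$ by $c^{-2}L(s,s')^{-2}$), not the restoring piece; for the restoring term the integrand is nonnegative at the minimizer, so you restrict to $\{L\geq\delta\}$ freely and then use the trivial reverse inequality $|X(s')-X(s)|\leq L(s,s')$ to lower-bound the fraction.
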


\subsection{Higher-order estimates}
\label{sec: higher-order estimates}
Denote
\beq
\pa_s \ln X'(s) = \f{X''(s)}{X'(s)} =: Z(s)+iW(s),
\label{eqn: introducing Z and W}
\eeq
where (see \eqref{eqn: a relation for curvature})
\beq
Z(s)= \f{\pa_s |X'(s)|}{|X'(s)|},\quad W(s)=\ka(s)|X'(s)|.
\label{eqn: def of Z and W}
\eeq
In this subsection, we shall first bound $Z$ in $L^2$, which is motivated by a special estimate in the tangential Peskin problem \cite[Lemma 5.3]{Tong2022GlobalST}.
This then allows us to bound $X''$.

We first derive the equation for $Z$.
\begin{lem}
$Z(s) = \pa_s |X'(s)|/|X'(s)|$ solves
\beq
\begin{split}
&\;\pa_t Z(s)\\
=
&\; \f{1}{4\pi} \pv\int_{\T} \left[\Re\, J(s,s') \big(Z(s') - Z(s)\big)
- \Im\, J(s,s') \big(W(s') - W(s)\big)
\right] ds'\\
&\; + \f{1}{2\pi}\int_{\T}
\frac{|X'(s')|^2\sin 2\Phi(s,s')}{|X(s')-X(s)|^2}
\left(3\Im\,I(s,s')-\kappa(s)|X'(s)|
\right) ds'.
\end{split}
\label{eqn: equation for Z prelim}
\eeq
\begin{proof}
By Lemma \ref{lem: some singular integrals along the string curve} and \eqref{eqn: eqn for |X'| complex form},
\[
\frac{\pa_t |X'(s)|}{|X'(s)|} = \mathcal{I}_1+\mathcal{I}_2,
\]
where
\begin{align*}
\mathcal{I}_1:=&\; \f{1}{4\pi}
\int_{\T} \Re\left[\f{X'(s')(X'(s')- X'(s))}{(X(s')-X(s))^2}  -\f{X'(s')X''(s)}{X'(s)(X(s')-X(s))}\right] ds'
 + \f{1}{4\pi}
\Re\left[ \f{X''(s)}{X'(s)}\pi i\right],\\
\mathcal{I}_2:=&\; \f{1}{4\pi}\int_{\T}I_2(s,s')\,ds',\qquad
I_2(s,s'):=\Im \left[\f{2X'(s')^2 X'(s) }{(X(s')-X(s))^3} \right] \Im\left[\frac{X(s')-X(s)}{X'(s)}\right].
\end{align*}
Since
\[
\pa_t{Z}(s)
=\pa_t\frac{\pa_s |X'(s)|}{|X'(s)|}=\pa_s\frac{\pa_t |X'(s)|}{|X'(s)|}
=\pa_s\mathcal{I}_1+\pa_s\mathcal{I}_2.
\]
we take the derivative of $\CI_1$,
\begin{align*}
\pa_s\mathcal{I}_1
= &\; \f{1}{4\pi}
\pv\int_{\T} \Re\left[\f{2X'(s')(X'(s')- X'(s))X'(s)}{(X(s')-X(s))^3}  -\f{2X'(s')X''(s)}{(X(s')-X(s))^2}\right]ds'\\
=&\; \f{1}{4\pi} \pv\int_{\T} \Re\left[\f{X''(s')X'(s)-X'(s')X''(s)}{(X(s')-X(s))^2}\right] ds'.
\end{align*}
In the second equality, we used
\begin{align*}
&\; \pa_{s'}\left[\f{(X'(s')- X'(s))X'(s)}{(X(s')-X(s))^2}-\f{X''(s)}{X(s')-X(s)} \right]\\
=&\; -\f{2X'(s')(X'(s')- X'(s))X'(s)}{(X(s')-X(s))^3}+\f{X''(s')X'(s)+X'(s')X''(s)}{(X(s')-X(s))^2},
\end{align*}
and thanks to the regularity assumptions on $X$,
\[
\lim_{s'\to s}\left[\f{(X'(s')- X'(s))X'(s)}{(X(s')-X(s))^2}-\f{X''(s)}{X(s')-X(s)} \right]=\f{X'''(s)X'(s)-X''(s)^2}{2X'(s)^2}.
\]
On the other hand, by \eqref{eqn: def of curvature s variable},
\begin{align*}
&\pa_s I_2\\
= &\; \Im \left[\f{6X'(s')^2 X'(s)^2 }{(X(s')-X(s))^4} \right] \Im\left[\frac{X(s')-X(s)}{X'(s)}\right]\\
&\; +\Im \left[\f{2X'(s')^2 X'(s) }{(X(s')-X(s))^3}\cdot \f{X''(s)}{X'(s)} \right] \Im\left[\frac{X(s')-X(s)}{X'(s)}\right]\\
&\; -\Im \left[\f{2X'(s')^2 X'(s) }{(X(s')-X(s))^3} \right] \Im\left[\f{X''(s) }{X'(s)}\cdot \frac{(X(s')-X(s))}{X'(s)} \right]\\
=&\; \Im \left[ 6J(s,s')^2\right] \Im\left[\frac{X(s')-X(s)}{X'(s)}\right]
-\kappa(s)|X'(s)| \Im \left[2J(s,s')^2\right] \frac{|X(s')-X(s)|^2}{|X'(s)|^2}.
\end{align*}
Here we used the following identity for arbitrary $A,B,C\in \mathbb{C}$,
\begin{align*}
&\;\Im\left[AB\right]\cdot \Im\, C - \Im \, A \cdot \Im\left[BC\right]\\
= &\; -\Im\, B\cdot \left[\Re\, A \cdot \Im\, \bar{C} + \Im \, A \cdot \Re \, \bar{C} \right]\\
= &\;-\Im\, B\cdot \Im\, \left[A/C\right] |C|^2.
\end{align*}
Combining the above calculations, we obtain that
\beqo
\begin{split}
\pa_t Z(s)
= &\; \f{1}{4\pi} \pv\int_{\T} \Re\left[J(s,s') \left(\f{X''(s')}{X'(s')} - \f{X''(s)}{X'(s)}\right)\right] ds'\\
&\; + \f{1}{4\pi}\int_{\T}\Im \left[2J(s,s')^2\right] \frac{|X(s')-X(s)|^2}{|X'(s)|^2}
\left(3\Im\,I(s,s')-\kappa(s)|X'(s)|
\right) ds'.
\end{split}
\eeqo
Then \eqref{eqn: equation for Z prelim} follows.
\end{proof}
\end{lem}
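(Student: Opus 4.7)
Since $Z=\pa_s\ln|X'|$, my plan is to derive \eqref{eqn: equation for Z prelim} by exchanging spatial and temporal derivatives: $\pa_t Z(s)=\pa_s\bigl(\pa_t|X'(s)|/|X'(s)|\bigr)$, then differentiating \eqref{eqn: eqn for |X'| complex form} with respect to $s$. It is convenient to write $\pa_t|X'|/|X'|=\CI_1+\CI_2$, where $\CI_1$ is the real-part principal-value piece and $\CI_2$ is the product of imaginary parts. Before differentiating $\CI_1$, I would use Lemma \ref{lem: some singular integrals along the string curve} to absorb the residual $\pi i X'(s)$ contribution into a regularizing subtraction involving $X''(s)$, so that the resulting integrand is bounded at $s'=s$ and the $\pa_s$ can safely be moved under the integral.

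The key computation for $\pa_s\CI_1$ is the antiderivative identity
\[
\pa_{s'}\!\left[\f{(X'(s')-X'(s))X'(s)}{(X(s')-X(s))^2}-\f{X''(s)}{X(s')-X(s)}\right]
=\f{X''(s')X'(s)+X'(s')X''(s)}{(X(s')-X(s))^2}-\f{2X'(s')(X'(s')-X'(s))X'(s)}{(X(s')-X(s))^3}.
\]
Using it to replace the cubic-denominator term in $\pa_s\CI_1$ (the $\pa_{s'}$-total-derivative integrates to zero in principal value thanks to the regularization), one obtains
\[
\pa_s\CI_1=\f{1}{4\pi}\pv\int_{\T}\Re\!\left[\f{X''(s')X'(s)-X'(s')X''(s)}{(X(s')-X(s))^2}\right]ds'.
\]
Factoring $J(s,s')$ out of the bracket yields $\Re\bigl[J(s,s')\bigl(X''(s')/X'(s')-X''(s)/X'(s)\bigr)\bigr]$, and substituting $X''/X'=Z+iW$ with $W=\kappa|X'|$ gives precisely the first integral of \eqref{eqn: equation for Z prelim}.

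For $\pa_s\CI_2$ I would differentiate directly and sort the resulting three terms. The one coming from $\pa_s[X'(s)/(X(s')-X(s))^3]$ contributes $\Im[6J(s,s')^2]\,\Im\bigl[(X(s')-X(s))/X'(s)\bigr]$. The other two terms each carry an $X''(s)$ factor and combine via the algebraic identity
\[
\Im[AB]\,\Im C-\Im A\,\Im[BC]=-\Im B\cdot\Im[A/C]\,|C|^2,
\]
applied with $A=X'(s')^2X'(s)/(X(s')-X(s))^3$, $B=(X(s')-X(s))/X'(s)$, $C=X''(s)/X'(s)$, yielding $-\kappa(s)|X'(s)|\,\Im[2J(s,s')^2]\cdot|X(s')-X(s)|^2/|X'(s)|^2$. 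Converting $\Im[(X(s')-X(s))/X'(s)]=\Im\,I(s,s')\cdot|X(s')-X(s)|^2/|X'(s)|^2$ and using the identity $\Im[J(s,s')^2]\cdot|X(s')-X(s)|^2/|X'(s)|^2=|X'(s')|^2\sin 2\Phi(s,s')/|X(s')-X(s)|^2$, one recovers the second integral of \eqref{eqn: equation for Z prelim} (with the coefficient $3$ arising from $6-2\cdot\!\text{(factor of 3 vs 1)}$ in the recombination).

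The main obstacle I anticipate is keeping the principal-value integrals well-defined at each stage: differentiating in $s$ raises the apparent order of the singularity at $s'=s$, and the convergence of the intermediate expressions relies on the cancellations produced by the $X''(s)$ regularizations and on the fact that $\Phi(s,s')$ vanishes to second order at the diagonal. The regularity provided by \ref{assumption: geometry}--\ref{assumption: non-degeneracy}, which guarantees $X\in C^3$ in $s$, is exactly what is needed to justify the Taylor-expansion cancellations at $s'=s$ in the spirit of Lemma \ref{lem: some singular integrals along the string curve}, and hence to legitimize both the differentiation under the integral sign and the rearrangements above.
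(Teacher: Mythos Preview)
Your proposal is correct and follows essentially the same route as the paper's proof: the same decomposition $\pa_t|X'|/|X'|=\CI_1+\CI_2$, the same antiderivative identity to reduce $\pa_s\CI_1$ to $\tfrac{1}{4\pi}\pv\int_\T\Re\bigl[J(s,s')\bigl(X''(s')/X'(s')-X''(s)/X'(s)\bigr)\bigr]ds'$, and the same algebraic identity $\Im[AB]\Im C-\Im A\,\Im[BC]=-\Im B\cdot\Im[A/C]\,|C|^2$ (with your $B$ and $C$ swapped relative to the paper, which is immaterial) to collapse the $X''(s)$ terms in $\pa_s\CI_2$. Your remark that $\Phi(s,s')$ vanishes to second order at the diagonal is correct and is what makes the second integral in \eqref{eqn: equation for Z prelim} absolutely convergent; the garbled parenthetical about the coefficient $3$ should simply read that factoring $\Im[2J^2]$ out of both pieces leaves $3\Im I(s,s')-\kappa(s)|X'(s)|$.
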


\begin{prop}
\label{prop: L2 estimate for Z}
Let $\ka_*(t)$ be defined in Proposition \ref{prop: max principle and decay estimate for curvature}.
Suppose $\Phi_*(0)<\f{\pi}{4}$.
For $t\geq 0$,
\beq
\begin{split}
&\; \|Z(\cdot,t)\|_{L^2(\BT)}^2
+ \f{1}{8\pi} \int_0^t \int_\BT \int_{\T} |J(s,s',\tau)| \cos \Phi(s,s',\tau)  \big|Z(s',\tau) - Z(s,\tau)\big|^2 \, ds'\,ds \,d\tau \\
\leq &\; \|Z(\cdot,0)\|_{L^2(\BT)}^2
+ C\max\big\{\kappa_*(0),R_X^{-1}\big\}^3R_X\mathcal{E}(0),
\end{split}
\label{eqn: L^2 estimate for Z}
\eeq
where $C>0$ is universal.

\begin{proof}

Observe that
\[
\pa_{s'}\left(\Im\,I(s,s')-\f13\kappa(s)|X'(s)|
\right) = \Im\, J(s,s'),
\]
and (see \eqref{eqn: limit of Im Y})
\[
\lim_{s'\to s}\left( \Im\,I(s,s')-\f13\kappa(s)|X'(s)| \right) = \f16 \ka(s)|X'(s)|,
\]
so
\begin{align*}
&\; \int_{\T}
\Im \, J(s,s')
\left(3\Im\,I(s,s')-\kappa(s)|X'(s)|
\right) ds'\\
= &\; \f32
\int_{\T}
\pa_{s'}\left[
\left(\Im\,I(s,s')-\f13 \kappa(s)|X'(s)|
\right)^2\right] ds'
=0.
\end{align*}
Hence, \eqref{eqn: equation for Z prelim} can be rewritten as
\begin{align*}
&\;\pa_t Z(s)\\
=
&\; \f{1}{4\pi} \pv\int_{\T} \left[\Re\, J(s,s') \big(Z(s') - Z(s)\big)
- \Im\, J(s,s') \big(W(s') - W(s)\big)
\right] ds'\\
&\; + \f{1}{2\pi}\int_{\T} \frac{|X'(s')||X'(s)|\sin 2\Phi(s,s')}{|X(s')-X(s)|^2}
\left(|X'(s')|-|X'(s)|\right)
\left(\f{3\Im\,I(s,s')}{|X'(s)|}-\kappa(s)
\right) ds'\\
&\; + \f{1}{2\pi}\int_{\T} \frac{|X'(s')||X'(s)|^2}{|X(s')-X(s)|^2}
\left[\sin 2\Phi(s,s')- 2\sin \Phi(s,s')\right]
\left(\f{3\Im\,I(s,s')}{|X'(s)|}-\kappa(s)
\right) ds'.
\end{align*}
Taking inner product with $Z$ yields
\beqo
\begin{split}
&\; \frac{d}{dt}\int_{\T}|{Z}(s)|^2\, ds
= 2\int_{\T}{Z}(s)\pa_t{Z}(s)\,ds \\
= &\;
\f{1}{2\pi} \int_\BT \pv\int_{\T} \left[\Re\, J(s,s') \big(Z(s') - Z(s)\big)Z(s)
- \Im\, J(s,s') \big(W(s') - W(s)\big)Z(s)
\right] ds'\,ds \\
&\; + \f{1}{\pi}\int_{\T} Z(s) \int_{\T}  \frac{|X'(s')||X'(s)|\sin 2\Phi(s,s')}{|X(s')-X(s)|^2}
\left(|X'(s')|-|X'(s)|\right)
\left(\f{3\Im\,I(s,s')}{|X'(s)|}-\kappa(s)
\right) ds'\,ds\\
&\; + \f{1}{\pi}\int_{\T} Z(s) \int_{\T} \frac{|X'(s')||X'(s)|^2}{|X(s')-X(s)|^2}
\left[\sin 2\Phi(s,s')- 2\sin \Phi(s,s')\right]
\left(\f{3\Im\,I(s,s')}{|X'(s)|}-\kappa(s)
\right) ds'\,ds \\
=: &\; \CZ_1 + \CZ_2 + \CZ_3.
\end{split}
\eeqo

For $\CZ_1$, we interchange the $s$- and the $s'$-variables and apply the Young's inequality to obtain
\beqo
\begin{split}
\CZ_1 = &\;
-\f{1}{4\pi} \int_\BT \int_{\T} \Re\, J(s,s') \big|Z(s') - Z(s)\big|^2 \, ds'\,ds \\
&\;
-\f{1}{2\pi} \int_\BT \int_{\T}
\Im\, J(s,s')   W(s) \big(Z(s')-Z(s)\big)
\, ds'\,ds\\
\leq &\;
-\f{1}{6\pi} \int_\BT \int_{\T} |J(s,s')| \cos \Phi(s,s')  \big|Z(s') - Z(s)\big|^2 \, ds'\,ds \\
&\;
+ C\ka_*^2 \int_\BT \int_{\T}
|J(s,s')| \sin^2 \Phi(s,s') |X'(s)|^2 \, ds'\,ds,
\end{split}
\eeqo
where $C$ is a universal constant given that $\Phi_* < \pi/4$.
Lemma \ref{lem: the curve is outside the osculating circle} implies
\[
\left|\f{3\Im\,I(s,s')}{|X'(s)|}-\kappa(s)
\right|\leq C\ka_*.
\]
So for $\CZ_2$, by the Cauchy-Schwarz inequality,
\begin{align*}
\CZ_2
\leq &\; C\ka_* \int_{\T} \int_{\T}  |Z(s)| \frac{|X'(s')||X'(s)| |\sin 2\Phi(s,s')|}{|X(s')-X(s)|^2}
\left||X'(s')|-|X'(s)|\right| ds'\,ds\\
\leq &\; C\ka_* \left(\int_{\T} \int_{\T}  |Z(s)|^2 |X'(s)|\cdot  \frac{|X'(s')| |\sin 2\Phi(s,s')|^2}{|X(s')-X(s)|^2}\, ds'\,ds\right)^{1/2}\\
&\; \cdot \left(\int_{\T} \int_{\T}  \frac{|X'(s')| |X'(s)|}{|X(s')-X(s)|^2}
\left||X'(s')|-|X'(s)|\right|^2 ds'\,ds\right)^{1/2}\\
\leq &\; C\ka_* \left(\int_\BT |Z(s)|^2 |X'(s)|\,ds \right)^{1/2}
\left\| \int_{\T} \f{|X'(s')||\sin 2\Phi(s,s')|^2}{|X(s')-X(s)|^2} \, ds'\right\|_{L_s^\infty(\BT)}^{1/2}
\\
&\; \cdot \left(\int_{\T} \int_{\T}  |J(s,s')|
\left||X'(s')|-|X'(s)|\right|^2 \cos \Phi(s,s')\, ds'\, ds\right)^{1/2}.
\end{align*}
We derive as in \eqref{eqn: positive term due to nonzero Phi} that
\[
\int_{\T} \f{|X'(s')||\sin 2\Phi(s,s')|^2}{|X(s')-X(s)|^2} \, ds'
\leq  C\ka_* \Phi_*.
\]
On the other hand, with $d_* = d_*(t)$ defined in Proposition \ref{prop: geometric property of the curve},
\beq
\begin{split}
&\; \int_\BT \int_{\T} |J(s,s')|\big|Z(s') - Z(s)\big|^2 \, ds'\,ds \\
\geq &\; \int_\BT \int_{\T} \f{|X'(s)||X'(s')|}{d_*(t)^2}\big|Z(s') - Z(s)\big|^2 \, ds'\,ds\\
= &\;
\int_\BT \int_{\T} \f{|X'(s)||X'(s')|}{d_*(t)^2}(|Z(s')|^2 +|Z(s)|^2) \, ds'\,ds
=
\frac{2 \mathcal{L}(t)}{d_*(t)^2}\int_{\T} {|X'(s)|}|Z(s)|^2\,ds.
\end{split}
\label{eqn: bound L2 norm of Z in the arclength coordinate by the H 1/2 norm}
\eeq
In the last line, we used
\[
\int_{\T}|X'(s)|Z(s)\,ds
=\int_{\T}\partial_s|X'(s)|\,ds=0,
\]
and the definition of $\CL(t)$.
Using \eqref{eqn: R_X d_* CL comparable} in Proposition \ref{prop: geometric property of the curve},
\[
\int_\BT |Z(s)|^2 |X'(s)|\,ds
\leq CR_X \int_\BT \int_{\T} |J(s,s')|\big|Z(s') - Z(s)\big|^2 \, ds'\,ds.
\]
Combining all these estimates, we obtain a bound for $\CZ_2$
\begin{align*}
\CZ_2
\leq &\; C\ka_*^{3/2}\Phi_*^{1/2}R_X^{1/2}\left( \int_\BT \int_{\T} |J(s,s')|\big|Z(s') - Z(s)\big|^2 \cos \Phi(s,s') \, ds'\,ds\right)^{1/2}
\\
&\; \cdot \left(\int_{\T} \int_{\T}  |J(s,s')|
\left||X'(s')|-|X'(s)|\right|^2 \cos \Phi(s,s')\, ds'\, ds\right)^{1/2},
\end{align*}
where $C$ is universal.

Similarly,
\beqo
\begin{split}
\CZ_3
\leq
&\;
C\ka_* \int_{\T} \int_{\T} |Z(s)| \frac{|X'(s')||X'(s)|^2}{|X(s')-X(s)|^2}
|\sin \Phi(s,s')|^3
\,ds'\,ds \\
\leq &\;
C\ka_* \Phi_* \left(\int_{\T} \int_{\T} |Z(s)|^2 |X'(s)| \frac{|X'(s')|}{|X(s')-X(s)|^2}
|\sin \Phi(s,s')|^2 \, ds'\,ds\right)^{1/2}\\
&\;\cdot
\left(\int_{\T} \int_{\T} \frac{|X'(s')||X'(s)|^3}{|X(s')-X(s)|^2}
\sin^2 \Phi(s,s')\,
ds'\,ds\right)^{1/2} \\
\leq &\;
C\ka_*^{3/2} \Phi_*^{3/2} R_X^{1/2}\left( \int_\BT \int_{\T} |J(s,s')|\big|Z(s') - Z(s)\big|^2 \cos \Phi(s,s') \, ds'\,ds\right)^{1/2}\\
&\;\cdot
\left(\int_{\T} \int_{\T} |J(s,s')|
\sin^2 \Phi(s,s')|X'(s)|^2\, ds'\,ds\right)^{1/2}.
\end{split}
\eeqo
Combining the estimates for $\CZ_j$ $(j = 1,2,3)$ and applying the Young's inequality, we obtain that
\beq
\begin{split}
&\; \frac{d}{dt}\int_{\T}|{Z}(s)|^2\,ds
+\f{1}{8\pi} \int_\BT \int_{\T} |J(s,s')| \cos \Phi(s,s')  \big|Z(s') - Z(s)\big|^2 \, ds'\,ds \\
\leq &\;
C\big(\ka_*^2 + \ka_*^{3} \Phi_*^{3} R_X\big) \int_\BT \int_{\T}
|J(s,s')| \sin^2 \Phi(s,s') |X'(s)|^2 \, ds'\,ds\\
&\; + C\ka_*^3 \Phi_* R_X
\int_{\T} \int_{\T} |J(s,s')|
\left||X'(s')|-|X'(s)|\right|^2 \cos \Phi(s,s')\, ds'\, ds.
\end{split}
\label{eqn: time derivative of the integral of Z squared}
\eeq

In view of the energy estimate \eqref{eqn: energy estimate}, we have that
\beq
\begin{split}
&\;\f{d}{dt}\int_{\BT} \f12 |X'(s)|^2\,ds\\
\leq &\;  -\f{1}{16\pi} \int_{\BT}
\int_{\T}| J(s,s')| \left(|X'(s)|-|X'(s')|\right)^2 \cos \Phi(s,s') \,ds'\,ds\\
&\; -\f{1}{16 \pi} \int_{\BT}
\int_{\T} | J(s,s')| \big(|X'(s)| + |X'(s')| \big)^2
\sin^2 \Phi(s,s')\, ds'\,ds\\
=: &\; -\f1{16\pi}\CD(t).
\end{split}
\label{eqn: energy estimate rewritten}
\eeq
Here we used, with $\Phi=\Phi(s,s')$, $|\Phi|\leq\pi/4$, $\cos \Phi\geq\cos 2\Phi\geq0$, and
\[
\cos \Phi-\cos 2\Phi 
\geq \cos^2 \Phi-\cos 2\Phi
= \sin^2 \Phi.
\]
On the other hand, 
Proposition \ref{prop: max principle and decay estimate for curvature} implies that $1\leq \ka_*(t)R_X\leq \max\{\ka_*(0)R_X, C_*\}$ with $C_* = 7+5\sqrt{2}$.
Applying these estimates to \eqref{eqn: time derivative of the integral of Z squared} yields
\beq
\begin{split}
&\; \frac{d}{dt}\|Z(\cdot,t)\|_{L^2}^2
+\f{1}{8\pi} \int_\BT \int_{\T} |J(s,s')| \cos \Phi(s,s')  \big|Z(s') - Z(s)\big|^2 \, ds'\,ds \\
\leq &\;
C_\dag \max\{\ka_*(0)R_X, C_*\}^3 R_X^{-2} \CD(t),
\end{split}
\label{eqn: time derivative of the integral of Z squared simplified}
\eeq
where $C_\dag>0$ is universal.
Adding \eqref{eqn: energy estimate rewritten} and \eqref{eqn: time derivative of the integral of Z squared simplified} with suitable coefficients, we obtain that
\beqo
\begin{split}
&\; \frac{d}{dt}\Big[
\|Z(\cdot,t)\|_{L^2}^2 + 16\pi C_\dag \max\{\ka_*(0)R_X, C_*\}^3 R_X^{-2} \CE(t)\Big]\\
&\;
+\f{1}{8\pi} \int_\BT \int_{\T} |J(s,s')| \cos \Phi(s,s')  \big|Z(s') - Z(s)\big|^2 \, ds'\,ds
\leq  0.
\end{split}
\eeqo
Then \eqref{eqn: L^2 estimate for Z} follows.
\end{proof}
\end{prop}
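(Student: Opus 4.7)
The plan is to perform an $L^2$ energy estimate: multiply the equation for $\pa_t Z(s)$ derived in the preceding lemma by $Z(s)$, integrate over $\BT$, and extract the dissipative structure. The first term on the right-hand side of that $Z$-equation,
\[
\f{1}{4\pi}\pv\int_{\BT}\left[\Re\,J(s,s')(Z(s')-Z(s))-\Im\,J(s,s')(W(s')-W(s))\right]ds',
\]
after pairing with $Z(s)$ and symmetrizing in $s\leftrightarrow s'$ using $J(s,s')=J(s',s)$, produces the dissipation
\[
-\f{1}{8\pi}\int_{\BT}\int_{\BT}|J(s,s')|\cos \Phi(s,s')\,|Z(s')-Z(s)|^2\,ds'\,ds,
\]
which is strictly negative since Proposition \ref{prop: bound for the max |Phi|} gives $|\Phi|<\pi/4$; this supplies exactly the second quantity on the left-hand side of the claim. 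The antisymmetric piece involving $W(s')-W(s)$ is absorbed into this dissipation by Young's inequality, leaving a remainder of the form $C\kappa_*^2\int\int|J|\sin^2\Phi\,|X'(s)|^2\,ds'\,ds$ after using $|W|=|\kappa|\,|X'|\le\kappa_*|X'|$.

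For the remaining curvature-type term on the right-hand side of the $Z$-equation, the key structural observation is the identity
\[
\int_{\BT}\Im\,J(s,s')\bigl(3\Im\,I(s,s')-\kappa(s)|X'(s)|\bigr)\,ds'=0,
\]
because $\pa_{s'}\bigl(\Im\,I(s,s')-\tfrac{1}{3}\kappa(s)|X'(s)|\bigr)=\Im\,J(s,s')$ makes the integrand an exact $s'$-derivative of $\tfrac{3}{2}\bigl(\Im\,I-\tfrac{1}{3}\kappa|X'|\bigr)^2$ on the circle. Subtracting an appropriate multiple of this vanishing identity allows one to decompose the offending term into two pieces: one containing the factor $(|X'(s')|-|X'(s)|)\sin 2\Phi$, and another containing $|X'(s)|^2(\sin 2\Phi-2\sin \Phi)$; both vanish faster than the bare $\sin 2\Phi$ as $\Phi\to 0$. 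Using Lemma \ref{lem: the curve is outside the osculating circle} to bound $|3\Im\,I/|X'|-\kappa|\lesssim\kappa_*$, Cauchy--Schwarz then controls each piece by $C\kappa_*^{3/2}R_X^{1/2}(\Phi_*^{1/2}+\Phi_*^{3/2})$ times the geometric mean of the main dissipation and the energy-dissipation quantity appearing in \eqref{eqn: energy estimate}.

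Applying Young's inequality one more time absorbs half of the main dissipation into the left-hand side; the remaining error is bounded by $C\max\{\kappa_*(0),R_X^{-1}\}^3 R_X^{-2}$ times the energy-dissipation from \eqref{eqn: energy estimate}, using the uniform-in-time bound $\kappa_*(t)R_X\le\max\{\kappa_*(0)R_X,\,7+5\sqrt{2}\}$ from Proposition \ref{prop: max principle and decay estimate for curvature} and the comparabilities $\CL(t), d_*(t)\lesssim R_X$ from Proposition \ref{prop: geometric property of the curve}. Adding a sufficiently large multiple of $\CE(t)$ to $\|Z(\cdot,t)\|_{L^2}^2$ then absorbs these errors via the energy estimate, and integrating in time (using $\CE(t)\le\CE(0)$) yields the target inequality. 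The main obstacle I foresee is the curvature term: one must find and exploit the exact vanishing integral identity above so as to replace $\sin 2\Phi$ by the strictly smaller quantities $\sin 2\Phi-2\sin\Phi\sim\sin^3\Phi$ and the difference $|X'(s')|-|X'(s)|$, since naively estimating $|\sin 2\Phi|\le \sin 2\Phi_*$ would leave a residual linear in $\Phi_*$ that cannot be absorbed into the quadratic main dissipation.
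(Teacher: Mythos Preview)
Your proposal is correct and follows essentially the same route as the paper: the exact-derivative identity for $\Im\,J\cdot(3\Im\,I-\kappa|X'|)$, the resulting decomposition of the curvature term into the $(|X'(s')|-|X'(s)|)\sin 2\Phi$ and $(\sin 2\Phi-2\sin\Phi)$ pieces, the Cauchy--Schwarz/Young absorption into the dissipation, and the final combination with the energy estimate all match the paper's argument. The only step you leave slightly implicit is the Poincar\'e-type bound $\int_{\BT}|Z(s)|^2|X'(s)|\,ds\lesssim R_X\int\int|J|\,|Z(s')-Z(s)|^2$, which the paper derives from $\int_{\BT}|X'(s)|Z(s)\,ds=\int_{\BT}\partial_s|X'(s)|\,ds=0$; your stated $R_X^{1/2}$ factor indicates you have this in mind.
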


We can further derive a decay estimate for $\|Z\|_{L^2}$.
\begin{prop}
\label{prop: L2 decay for Z}
There exist universal constants $\g,C>0$, such that for all $t\geq 0$,
\[
\|Z(\cdot,t)\|_{L^2(\BT)}^2
\leq
C e^{-\g t} \left(\|Z(\cdot,0)\|_{L^2(\BT)}^2
+ \max\big\{\kappa_*(0)^3R_X ,\|X'(s,0)\|_{L^\infty} R_X^{-3} \big \} \mathcal{E}(0)\right).
\]

\begin{proof}
The case $t\in[0,1]$ follows from Proposition \ref{prop: L2 estimate for Z}, so we assume $t\geq 1$.
We first apply Proposition \ref{prop: bound for the max |Phi|} to see that, if $t\geq 1$,
\beqo
\int_0^t \cos 2\Phi_*(\tau)\,d\tau \geq Ct,
\eeqo
where $C>0$ is universal.
Hence, for $t\geq 1$, Proposition \ref{prop: max principle and decay estimate for curvature} gives that
\[
\kappa_*(t)R_X
\leq 1+C \exp\left[Ct^{-1}\right] \leq C,
\]
and Proposition \ref{prop: lower bound for |X'| and the well-stretched condition} implies that
\[
\min_s|X'(s,t)|
\geq R_X \exp\big[-2\coth(Ct)\big]
\geq C R_X,
\]
where $C>0$ is universal.

By Proposition \ref{prop: geometric property of the curve}
and \eqref{eqn: bound L2 norm of Z in the arclength coordinate by the H 1/2 norm}, for $t\geq 1$,
\begin{align*}
&\; \f{1}{8\pi} \int_\BT \int_{\T} |J(s,s')| \cos \Phi(s,s')  \big|Z(s') - Z(s)\big|^2 \, ds'\,ds \\
\geq &\; \f{\cos \Phi_*}{8\pi} \cdot \frac{2 \mathcal{L}(t)}{d_*(t)^2}\int_{\T} {|X'(s)|}|Z(s)|^2\,ds\\
\geq &\; CR_X^{-1} \min_{s}|X'(s,t)|\|Z(\cdot,t)\|_{L^2}^2
\geq C\|Z(\cdot,t)\|_{L^2}^2.
\end{align*}
On the other hand, arguing as in \eqref{eqn: positive term due to nonzero Phi} and using Lemma \ref{lem: energy estimate} and Lemma \ref{lem: max principle for |X'|},
\begin{align*}
\int_\BT \int_{\T}
|J(s,s')| \sin^2 \Phi(s,s') |X'(s)|^2 \, ds'\,ds
\leq&\; C\int_\BT |X'(s)|^3 \ka_* \sin \Phi_* \,ds \\
\leq &\; CR_X^{-1} \CE(0) \|X'(s,0)\|_{L^\infty}\Phi_*(t).
\end{align*}
Combining these estimates with \eqref{eqn: time derivative of the integral of Z squared} yields that, for $t\geq 1$,
\beqo
\frac{d}{dt}\|Z(\cdot,t)\|_{L^2}^2
+\g \|Z(\cdot,t)\|_{L^2}^2 
\leq
C\big(R_X^{-3} \CE(0) \|X'(s,0)\|_{L^\infty} + R_X^{-2} \CD(t)\big)\Phi_*(t) ,
\eeqo
where $\g,C>0$ are universal constants.

For simplicity, we assume $\g \leq (2\pi^2)^{-1}$, so that $e^{\g \tau} e^{-\tau/\pi^2}\leq e^{-\g \tau}$.
By the Gr\"{o}nwall's inequality and Proposition \ref{prop: bound for the max |Phi|}, for $t\geq 1$,
\begin{align*}
\|Z(\cdot,t)\|_{L^2}^2
\leq &\;
e^{-\g(t-1)} \|Z(\cdot,1)\|_{L^2}^2\\
&\; + C \int_1^t  e^{-\g(t-\tau)} \left(R_X ^{-3}\CE(0)
\| X'(s,0)\|_{L^\infty} + R_X ^{-2}\CD(\tau) \right)
\Phi_*(0)e^{-\tau/\pi^2}\,d\tau\\
\leq &\;
e^{-\g(t-1)} \|Z(\cdot,1)\|_{L^2}^2
+ C R_X^{-2} \int_1^t e^{-\g(t+\tau)} \CD(\tau)\,d\tau\\
&\; + C R_X^{-3}\CE(0) \|X'(s,0)\|_{L^\infty} \int_1^t  e^{-\g(t+\tau)} \,d\tau\\
\leq &\;
e^{-\g(t-1)} \|Z(\cdot,1)\|_{L^2}^2
+ C e^{-\g t} R_X^{-3} \CE(0)\| X'(s,0)\|_{L^\infty} .
\end{align*}
In the last inequality, we used $R_X\leq \f1{2\pi}\CL(0)\leq \|X'(s,0)\|_{L^\infty}$ due the isoperimetric inequality, and
\[
\int_1^t \CD(\eta)\,d\eta
\leq 16\pi\big(\CE(1) - \CE(t)\big)\leq C\CE(0)
\]
derived from \eqref{eqn: energy estimate rewritten}.
This together with \eqref{eqn: L^2 estimate for Z} implies the desired estimate.
\end{proof}
\end{prop}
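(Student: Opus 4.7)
The plan is to bootstrap from the uniform-in-time bound of Proposition \ref{prop: L2 estimate for Z} to an exponential decay, using that all geometric quantities become quantitatively well-behaved once $t$ is bounded away from $0$. For $t\in [0,1]$ the conclusion is immediate from Proposition \ref{prop: L2 estimate for Z}, so the work is to prove exponential decay on $[1,+\infty)$. On this range, Proposition \ref{prop: bound for the max |Phi|} gives $\int_0^t \cos 2\Phi_*(\tau)\,d\tau \geq ct$, which in turn makes Proposition \ref{prop: max principle and decay estimate for curvature} yield a universal upper bound $\kappa_*(t)R_X \leq C$, Proposition \ref{prop: lower bound for |X'| and the well-stretched condition} yield $\min_s |X'(s,t)| \geq c R_X$, and Proposition \ref{prop: geometric property of the curve} together with $\Phi_*(t) < \pi/4$ yield $\mathcal{L}(t)/d_*(t)^2 \geq c/R_X$ with universal constants.

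The key step is to extract from the dissipation term appearing on the left-hand side of \eqref{eqn: L^2 estimate for Z} a genuine $L^2$-coercive lower bound on $Z$. For this I would use the crucial zero-mean property $\int_{\BT} |X'(s)| Z(s,t)\,ds = \int_{\BT} \partial_s |X'(s,t)|\,ds = 0$ together with the naive lower bound $|J(s,s',t)| \geq \frac{|X'(s)||X'(s')|}{d_*(t)^2}$. Expanding $|Z(s') - Z(s)|^2$ and using the zero-mean cancellation (exactly as in \eqref{eqn: bound L2 norm of Z in the arclength coordinate by the H 1/2 norm}) gives
\[
\int_{\BT}\!\int_{\BT} |J(s,s')|\cos \Phi(s,s') |Z(s')-Z(s)|^2\, ds'\,ds \geq \frac{2\cos\Phi_*(t)\,\mathcal{L}(t)}{d_*(t)^2} \int_\BT |X'(s)| |Z(s)|^2\,ds,
\]
and then the time-$t\geq 1$ lower bound $|X'(s,t)| \geq c R_X$ combined with $\mathcal{L}/d_*^2 \geq c/R_X$ turns the right-hand side into $c\|Z(\cdot,t)\|_{L^2}^2$ for a universal $c>0$.

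Next I would revisit the forcing terms coming from the $\CZ_2$ and $\CZ_3$ pieces of $\frac{d}{dt}\|Z\|_{L^2}^2$ in the proof of Proposition \ref{prop: L2 estimate for Z} and rebound them keeping the explicit factor of $\Phi_*(t)$ (rather than absorbing it into a universal constant). Using the integral $\int_\BT |X'(s')|^{-2} \sin^2 \Phi(s,s')\,ds' \lesssim \kappa_* \Phi_*$ (as in \eqref{eqn: positive term due to nonzero Phi}) together with the $L^\infty$ maximum principle for $|X'|$ from Lemma \ref{lem: max principle for |X'|}, the forcing takes the schematic form $C\bigl(R_X^{-3}\|X'(\cdot,0)\|_{L^\infty}\mathcal{E}(0) + R_X^{-2}\mathcal{D}(t)\bigr)\Phi_*(t)$, where $\mathcal{D}$ is the dissipation from the energy identity \eqref{eqn: energy estimate rewritten}.

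Putting everything together yields, for $t\geq 1$, the differential inequality
\[
\frac{d}{dt}\|Z(\cdot,t)\|_{L^2}^2 + \gamma \|Z(\cdot,t)\|_{L^2}^2 \leq C\Phi_*(t)\bigl(R_X^{-3}\|X'(\cdot,0)\|_{L^\infty}\mathcal{E}(0) + R_X^{-2}\mathcal{D}(t)\bigr),
\]
with universal $\gamma, C>0$. Gr\"onwall plus the exponential decay $\Phi_*(t) \leq Ce^{-t/\pi^2}$ from Proposition \ref{prop: bound for the max |Phi|} and the integrability $\int_1^\infty \mathcal{D}(\tau)\,d\tau \leq C\mathcal{E}(0)$ from \eqref{eqn: energy estimate rewritten} (choosing $\gamma$ smaller than $1/\pi^2$ so that $e^{\gamma \tau}e^{-\tau/\pi^2} \leq e^{-\gamma \tau}$) then produce the claimed exponential decay on $[1,+\infty)$; combining with Proposition \ref{prop: L2 estimate for Z} on $[0,1]$ and the isoperimetric bound $R_X \leq \|X'(\cdot,0)\|_{L^\infty}$ finishes the proof. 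The main obstacle I expect is the coercive lower bound: it requires knowing simultaneously that $\Phi_*$ is small, $\kappa_* R_X$ is bounded, and $|X'|$ is comparable to $R_X$, none of which is true initially, which is precisely why the decay estimate only kicks in after a unit time.
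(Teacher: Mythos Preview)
Your proposal is correct and follows essentially the same route as the paper's proof: the same split at $t=1$, the same coercivity extraction via \eqref{eqn: bound L2 norm of Z in the arclength coordinate by the H 1/2 norm} and the zero-mean of $|X'|Z$, the same differential inequality with forcing $C\Phi_*(t)(R_X^{-3}\|X'(\cdot,0)\|_{L^\infty}\mathcal{E}(0) + R_X^{-2}\mathcal{D}(t))$, and the same Gr\"onwall argument. Two cosmetic slips to fix: the condition needed for $e^{\gamma\tau}e^{-\tau/\pi^2}\leq e^{-\gamma\tau}$ is $\gamma\leq (2\pi^2)^{-1}$, not merely $\gamma<1/\pi^2$; and in your reference to \eqref{eqn: positive term due to nonzero Phi} the integrand should be $|X'(s')|\sin^2\Phi(s,s')/|X(s')-X(s)|^2$, not $|X'(s')|^{-2}\sin^2\Phi(s,s')$.
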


Finally, we arrived at the uniform boundedness of $\|X''\|_{L^2}$.
\begin{cor}
\label{cor: uniform boundedness for X''}
Suppose $\Phi_*(0)<\pi/4$, $\CE(0)<+\infty$, $\ka_*(0)<+\infty$, and $Z(\cdot,0)\in L^2(\BT)$.
Then $\|X''(\cdot,t)\|_{L^2(\BT)}$ is uniformly bounded for $t\geq 0$, and the bound only depends on $R_X$, $\CE(0)$, $\ka_*(0)$, $\|X'(\cdot,0)\|_{L^{\infty}}$, and $\|Z(\cdot,0)\|_{L^2}$.
\begin{proof}
By \eqref{eqn: introducing Z and W} and \eqref{eqn: def of Z and W},
\[
|X''(s)|^2
= |X'(s)|^2\big(|Z(s)|^2+|X'(s)|^2|\kappa(s)|^2\big),
\]
so by Proposition \ref{prop: max principle and decay estimate for curvature}, Lemma \ref{lem: energy estimate} and Lemma \ref{lem: max principle for |X'|},
\[
\|X''\|_{L^2}^2
\leq \|X'\|_{L^{\infty}}^2
\big(\|Z\|_{L^2}^2+2\CE(t) \kappa_*(t)^2\big)
\leq C\|X'(\cdot,0)\|_{L^{\infty}}^2
\big(\|Z\|_{L^2}^2+ \CE(0) \kappa_*(0)^2\big).
\]
Then the claim follows from Proposition \ref{prop: L2 estimate for Z}.
\end{proof}
\end{cor}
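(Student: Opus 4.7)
The plan is to reduce the claim to pointwise algebra plus three uniform bounds that have already been established in Sections 3--4 and in Proposition \ref{prop: L2 estimate for Z}. The starting point is the identity from \eqref{eqn: introducing Z and W}--\eqref{eqn: def of Z and W}, namely
\[
\frac{X''(s)}{X'(s)} = Z(s) + i\kappa(s)|X'(s)|,
\]
which upon taking modulus squared and multiplying by $|X'(s)|^2$ gives
\[
|X''(s)|^2 = |X'(s)|^2|Z(s)|^2 + |X'(s)|^4 \kappa(s)^2.
\]
Integrating over $\BT$ and pulling out an $L^\infty$-factor of $|X'|$ from each term yields
\[
\|X''(\cdot,t)\|_{L^2}^2 \leq \|X'(\cdot,t)\|_{L^\infty}^2 \bigl(\|Z(\cdot,t)\|_{L^2}^2 + 2\kappa_*(t)^2 \CE(t)\bigr),
\]
so the corollary reduces to bounding each of the four time-dependent quantities on the right by a constant depending only on the initial data and $R_X$.

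For $\|X'(\cdot,t)\|_{L^\infty}$, I would invoke Lemma \ref{lem: max principle for |X'|}, whose hypothesis $\Phi_*(0)<\pi/4$ is assumed, to get $\|X'(\cdot,t)\|_{L^\infty} \leq \|X'(\cdot,0)\|_{L^\infty}$ for all $t \geq 0$. For $\CE(t)$, Lemma \ref{lem: energy estimate} (same hypothesis on $\Phi_*(0)$) gives $\CE(t) \leq \CE(0)$. For $\kappa_*(t)$, Proposition \ref{prop: max principle and decay estimate for curvature}(iii) asserts that $\max\{\kappa_*(t)R_X, 7+5\sqrt{2}\}$ is non-increasing in $t$, which yields $\kappa_*(t) \leq \max\{\kappa_*(0),\,(7+5\sqrt{2})R_X^{-1}\}$. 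Finally, for $\|Z(\cdot,t)\|_{L^2}$, Proposition \ref{prop: L2 estimate for Z} (indeed its right-hand side, discarding the non-negative dissipation term) provides
\[
\|Z(\cdot,t)\|_{L^2}^2 \leq \|Z(\cdot,0)\|_{L^2}^2 + C\max\{\kappa_*(0),R_X^{-1}\}^3 R_X\,\CE(0),
\]
which is uniform in $t$.

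Combining the four bounds produces a constant depending only on $R_X$, $\CE(0)$, $\kappa_*(0)$, $\|X'(\cdot,0)\|_{L^\infty}$, and $\|Z(\cdot,0)\|_{L^2}$, as asserted. I do not anticipate any real obstacle here: the corollary is essentially an algebraic repackaging of the pointwise relation between $X''$, $Z$, and $\kappa|X'|$, and every analytic input it requires has already been set up. The genuine work was done earlier --- the maximum principle for $\Phi_*$ (Proposition \ref{prop: bound for the max |Phi|}), the curvature extremum principle (Proposition \ref{prop: max principle and decay estimate for curvature}), and the dissipative $L^2$-estimate for $Z$ (Proposition \ref{prop: L2 estimate for Z}) --- and this corollary simply harvests their consequences.
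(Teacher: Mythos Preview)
Your proposal is correct and follows essentially the same approach as the paper: both start from the pointwise identity $|X''|^2 = |X'|^2(|Z|^2 + |X'|^2\kappa^2)$, pull out $\|X'\|_{L^\infty}^2$, and then control each factor via Lemma \ref{lem: max principle for |X'|}, Lemma \ref{lem: energy estimate}, Proposition \ref{prop: max principle and decay estimate for curvature}, and Proposition \ref{prop: L2 estimate for Z}. Your write-up is in fact slightly more explicit than the paper's in tracking the $\max\{\kappa_*(0),(7+5\sqrt{2})R_X^{-1}\}$ coming from the curvature extremum principle, but the argument is the same.
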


\section{Proof of the Main Results}
\label{sec: proof of main results}
Now we are ready to prove Theorem \ref{thm: main thm}.
\begin{proof}[Proof of Theorem \ref{thm: main thm}]
Since $X_0(s)\in h^{1,\al}(\BT)$ and $X_0$ satisfies the well-stretched condition, say with constant $\lam>0$, by Theorem 1.3 of \cite{mori2019well}, there exists $T>0$ and a unique
\[
X(s,t)\in C([0,T];C^{1,\al}(\BT))\cap C^1([0,T];C^{\al}(\BT))
\]
such that $X(s,t)$ is a strong solution of \eqref{eqn: contour dynamic equation complex form} with the initial condition $X(s,0) = X_0(s)$.
It holds that
\begin{enumerate}[label = (\roman*)]
  \item $X(s,t)$ satisfies \eqref{eqn: contour dynamic equation complex form} in $\BT\times (0,T]$, and $X(\cdot,t)\to X_0(t)$ in $C^{1,\al}(\BT)$ as $t\to 0$;
  \item \label{property: well-stretched property of local solution}
  for all $t\in [0,T]$, $X(\cdot,t)$ satisfies the well-stretched condition with constant $\lam/2$ (see Section 3.3 of \cite{mori2019well});
\item \label{property: smoothness} by Theorem 1.4 of \cite{mori2019well}, for any $\d\in (0,T]$ and any $k\in \BN$, $X\in C^1([\d,T]; C^k(\BT))$.
\end{enumerate}

Take $t_0\in (0,T/2]$, $t_0\leq 1$, such that $\Phi_*(t)<\pi/4$ for all $t\in [0,t_0]$.
This is achievable thanks to the time continuity of $X$ in $C^{1,\al}(\BT)$ and $\Phi_*(0)<\pi/4$.
Indeed, the property \ref{property: well-stretched property of local solution} above implies that $\Phi(s_1,s_2,t)$ is continuous.
Clearly, $t_0$ may be chosen arbitrarily small.
By the properties \ref{property: well-stretched property of local       solution} and \ref{property: smoothness}, for any fixed $t\in [t_0,T]$, $s\mapsto X(s,t)$ is injective, $\min_s|X'(s,t)|\geq \lam/2$ and $X(\cdot,t)\in C^\infty(\BT)$.
In particular, $\|X'(\cdot,t_0)\|_{L^\infty}$, $\CE(t_0)$, $\ka_*(t_0)$, and $\|Z(\cdot,t_0)\|_{L^2}$ are all finite, and $X(s,t)\in C^1([t_0,T];C^k(\BT))$ for any $k\in \BN$.
Therefore, $X(s,t)$ on $\BT\times [t_0,T]$ satisfies our assumptions \ref{assumption: geometry}-\ref{assumption: non-degeneracy} in Section \ref{sec: preliminary}.
We treat $t_0$ as the new initial time, and find that:
\begin{enumerate}[label = (\alph*)]
  \item By Corollary \ref{cor: uniform boundedness for X''} and the Sobolev embedding $H^2(\BT)\hookrightarrow C^{1,1/2}(\BT)$,
      $\|X(\cdot,t)\|_{\dot{C}^{1,1/2}(\BT)}$ admits a uniform upper bound for $t\in [t_0,T]$.
      The bound only depends on $X(\cdot,t_0)$ but not on $T$.

  \item By Proposition \ref{prop: bound for the max |Phi|} and Proposition \ref{prop: lower bound for |X'| and the well-stretched condition}, there exists a constant $\tilde{\lam}>0$ only depending on $t_0$, such that for all $t\in [2t_0,T]$, $X(\cdot,t)$ satisfies the well-stretched condition with constant $\tilde{\lam}R_X$.
      In particular, $\tilde{\lam}$ does not depend on $X_0$ or $T$.
\end{enumerate}
Then by Theorem 1.3 and Theorem 1.8 of \cite{mori2019well}, the solution can be uniquely extended to $[0,+\infty)$.
For any $\d>0$ and any $k\in \BN$, $X\in C_{loc}^1([\d,+\infty); C^k(\BT))$.

In the sequel, we study long-time behavior of the global solution $X(s,t)$.
For convenience, we introduce the following terminology.
Suppose $Q(t)$ is a time-varying quantity defined in terms of the solution $X(\cdot ,t)$ for $t\in (0,+\infty)$.
By saying $Q(t)$ converges to $Q_*\in \BR$ exponentially (or decays exponentially in the case of $Q_* = 0$), we mean that for any $\d>0$, there exists a constant $C>0$ that may depend on $\d$ and $X_0$, and a universal constant $c>0$ that does not depend on $\d$ or $X_0$, such that $|Q(t)-Q_*|\leq C e^{-ct}$ for all $t\geq \d$.
If not otherwise stated, we will always adopt this convention in the rest of the proof.

By Proposition \ref{prop: bound for the max |Phi|} and the arbitrary smallness of $t_0$ above, $\Phi_*(t)< \pi/4$ for all $t\geq 0$, $\Phi_*(t)$ is continuous and non-increasing in $[0,+\infty)$, and $\Phi_*(t)$ decays exponentially, satisfying the claimed bound.
By Proposition \ref{prop: geometric property of the curve},
\[
\CL(t)= 2\sup_{s'}L(s,s')
\leq \f{\pi d_*}{1-\sin \Phi_*}
\leq \f{2\pi R_X }{1-\sin \Phi_*}\cdot \tan \left(\f{\pi}{4}+\f{\Phi_*}{2}\right).
\]
On the other hand, the isoperimetric inequality gives $\CL(t)\geq 2\pi R_X$.
Hence, by Proposition \ref{prop: bound for the max |Phi|}, as $t\to +\infty$, $\CL(t)$ converge to $2\pi R_X$ exponentially.

Proposition \ref{prop: max principle and decay estimate for curvature} implies that $\|\ka(s)R_X-1\|_{L^\infty}$ decays exponentially, and satisfies the desired bound in Theorem \ref{thm: main thm}.

By Proposition \ref{prop: lower bound for |X'| and the well-stretched condition}, for any $t>0$, and any distinct $s,s'\in \BT$,
\beqo
|X(s,t)-X(s',t)|\geq
|s-s'|_\BT \cdot R_X\cdot  C \exp\left[-2\coth\left(\f{\b}{ 2} \int_0^t \cos 2\Phi_*(\tau)\,d\tau\right)\right],
\eeqo
where $C$ and $\b$ are universal constants.
By Proposition \ref{prop: bound for the max |Phi|}, the function
\[
t\mapsto C \exp\left[-2\coth\left(\f{\b}{ 2} \int_0^t \cos 2\Phi_*(\tau)\,d\tau\right)\right]
\]
has a universal non-negative lower bound $\lam_\circ(t)$ that is strictly increasing on $[0,+\infty)$ such that $\lam_\circ(0) = 0$ and $\lam_\circ(t)>0$ for $t>0$.
This proves the desired claim for the well-stretched condition.

Lastly, we prove the exponential convergence to an equilibrium.
By the Cauchy-Schwarz inequality,
\beqo
\left(\int_\BT \big|\pa_s |X'(s)|\big|\,ds\right)^2 \leq \int_\BT \left(\f{\pa_s |X'(s)|}{|X'(s)|}\right)^2 ds
\int_\BT |X'(s)|^2 \,ds
= \|Z\|_{L^2}^2\|X'\|_{L^2}^2.
\eeqo
Using the smoothness of $X(\cdot,t)$ for $t>0$, Lemma \ref{lem: energy estimate}, 
and Proposition \ref{prop: L2 decay for Z}, we find 
\beqo
\int_\BT \big|\pa_s |X'(s)|\big|\,ds\mbox{ decays exponentially.}
\eeqo
Since
\begin{align*}
\big||X'(s)|-R_X\big|
\leq &\; \f{1}{2\pi} \left|\int_\BT (|X'(s')|-R_X)\,ds'\right| + \int_\BT \big|\pa_{s'}|X'(s')|\big|\,ds'\\
= &\; \f{1}{2\pi}|\CL(t)- 2\pi R_X| + \int_\BT \big|\pa_{s'}|X'(s')|\big|\,ds',
\end{align*}
$|X'(s)|$ converge uniformly to $R_X$ exponentially.
According to \eqref{eqn: introducing Z and W} and \eqref{eqn: def of Z and W},
\[
X''(s) -iX'(s) = \big[Z(s)+i\big(\ka(s)|X'(s)|-1\big)\big]X'(s).
\]
By Proposition \ref{prop: max principle and decay estimate for curvature}, Proposition \ref{prop: L2 decay for Z}, and the convergence of $|X'|$ shown above,
\beq
X''(s) -iX'(s) \mbox{ converges in $L^2(\BT)$ to $0$ exponentially.}
\label{eqn: X''-iX'}
\eeq

We write \eqref{eqn: contour dynamic equation} and \eqref{eqn: Stokeslet in 2-D} into the complex form
\beq
\begin{split}
\pa_t X(s)
= &\;
\f1{4\pi}
\int_\BT -\ln |X(s)-X(s')|X''(s')\,ds' \\
&\; + \f1{4\pi}
\int_\BT \f{X(s)-X(s')}{|X(s)-X(s')|^2} \Re \left[X''(s')\overline{(X(s)-X(s'))}\right] ds'.
\end{split}
\label{eqn: complex form of the contour dynamic equation original}
\eeq
This is equivalent to \eqref{eqn: contour dynamic equation complex form} given the smoothness of $X(s,t)$ and the well-stretched condition for $t\geq \d$ for any given $\d>0$.
Observe that, by integration by parts,
\beqo
\begin{split}
&\; \int_\BT \left(-\ln |X(s)-X(s')|iX'(s')
+ \f{X(s)-X(s')}{|X(s)-X(s')|^2} \Re \left[iX'(s')\overline{(X(s)-X(s'))}\right]\right) ds'\\
= &\; \int_\BT \left( i\pa_{s'}\ln |X(s)-X(s')|(X(s')-X(s))
+ \f{X(s')-X(s)}{|X(s)-X(s')|^2} \Re \left[iX'(s')\overline{(X(s')-X(s))} \right] \right) ds'\\
= &\; \int_\BT \f{X(s')-X(s)}{|X(s)-X(s')|^2}
\left(
i\Re\left[X'(s')\overline{(X(s')-X(s))}\right]
-\Im \left[X'(s')\overline{(X(s')-X(s))}\right] \right)ds'\\
= &\; i \int_\BT \f{X(s')-X(s)}{|X(s)-X(s')|^2}\cdot
 X'(s')\overline{(X(s')-X(s))} \, ds'\\
= &\; i \int_\BT  X'(s')\, ds' = 0.
\end{split}
\eeqo
Combining this with \eqref{eqn: complex form of the contour dynamic equation original} yields
\begin{align*}
\pa_t X(s)
= &\;
\f1{4\pi}
\int_\BT -\ln \left[\f{|X(s)-X(s')|}{R_X}\right]
\big(X''(s')-iX'(s')\big)\,ds' \\
&\; + \f1{4\pi}
\int_\BT \f{X(s)-X(s')}{|X(s)-X(s')|^2} \Re \left[\big(X''(s')-iX'(s')\big)\overline{(X(s)-X(s'))}\right] ds'.
\end{align*}
Here we inserted a factor of $1/R_X$ in the logarithm, which does not change the value of the integral.
Given arbitrary $\d>0$, thanks to the uniform well-stretched condition of $X(\cdot,t)$ for all $t\geq \d$,
\begin{align*}
|\pa_t X(s)|
\leq &\;
C
\int_\BT \Big(\big|\ln |s-s'|\big|+1\Big) \big|X''(s')-iX'(s')\big|\,ds' \\
\leq &\;
C
\Big\|\big|\ln |s-s'|\big|+1\Big\|_{L^2_{s'}} \big\|X''-iX'\big\|_{L^2}
\leq C\big\|X''-iX'\big\|_{L^2},
\end{align*}
where $C$ depends on $\d$ only.
Therefore, $\pa_t X$ uniformly converges to $0$ exponentially.
This further implies that there exists $X_\infty=X_\infty(s)\in L^\infty(\BT)$ such that
\beq
\mbox{$X(\cdot,t)$ converges uniformly to $X_\infty(s)$, with $\|X(\cdot,t)-X_\infty(\cdot)\|_{L^\infty}$ decaying exponentially.}
\label{eqn: uniform convergence in L^inf}
\eeq

By the uniform convergence, $X_\infty$ satisfies the well-stretched condition.
In view of the uniform bound for $\|X''\|_{L^2}$ for all $t\geq 1$, $X_\infty\in H^2(\BT)$ and $X(\cdot,t)$ converges to $X_\infty$ weakly in $H^2(\BT)$ and strongly in $C^1(\BT)$ as $t\to +\infty$.
By \eqref{eqn: X''-iX'} we have $X_\infty''-iX_\infty'=0 $.
As $|X'(s)|$ converges uniformly to $R_X$ exponentially, we have $|X'_\infty(s)|=R_X$.
Therefore, 
there exists $x_\infty\in \BC$ and $\xi_\infty\in \BT$, such that
\[
X_\infty(s) = x_\infty + R_X e^{i(s+\xi_\infty)}.
\]

Now we prove strong $H^2$-convergence to $X_\infty$.
Since $X_\infty''- iX_\infty'\equiv 0$,
\begin{align*}
&\; \|X(s,t)-X_\infty(s)\|_{\dot{H}^2}\\
\leq &\;
\big\|\big(X''(s,t)-iX'(s,t)\big)-\big(X''_\infty(s)-iX'_\infty(s)\big)\big\|_{L^2}
+\|X(s,t)-X_\infty(s)\|_{\dot{H}^1}\\
\leq &\; \|X''(s,t)-iX'(s,t)\|_{L^2}
+ C\|X(s,t)-X_\infty(s)\|_{L^2}^{1/2}
\|X(s,t)-X_\infty(s)\|_{\dot{H}^2}^{1/2}.
\end{align*}
By Young's inequality,
\[
\|X(s,t)-X_\infty(s)\|_{\dot{H}^2}
\leq C\|X''(s,t)-iX'(s,t)\|_{L^2}
+ C\|X(s,t)-X_\infty(s)\|_{L^2}.
\]
Then \eqref{eqn: X''-iX'} and \eqref{eqn: uniform convergence in L^inf} allow us to conclude that
\[
\mbox{$\|X(\cdot,t)-X_\infty(\cdot)\|_{H^2}$ decays exponentially.}
\]
\end{proof}


\begin{thebibliography}{10}

\bibitem{peskin2002immersed}
Charles~S. Peskin.
\newblock The immersed boundary method.
\newblock {\em Acta numerica}, 11:479--517, 2002.

\bibitem{peskin1972flow}
Charles~S. Peskin.
\newblock Flow patterns around heart valves: a numerical method.
\newblock {\em Journal of Computational Physics}, 10(2):252--271, 1972.

\bibitem{lin2019solvability}
Fang-Hua Lin and Jiajun Tong.
\newblock Solvability of the {Stokes} immersed boundary problem in two
  dimensions.
\newblock {\em Communications on Pure and Applied Mathematics}, 72(1):159--226,
  2019.

\bibitem{mori2019well}
Yoichiro Mori, Analise Rodenberg, and Daniel Spirn.
\newblock Well-posedness and global behavior of the {Peskin} problem of an
  immersed elastic filament in {Stokes} flow.
\newblock {\em Communications on Pure and Applied Mathematics}, 72(5):887--980,
  2019.

\bibitem{garcia2020peskin}
Eduardo Garc\'{i}a-Ju\'{a}rez, Yoichiro Mori, and Robert~M. Strain.
\newblock The {Peskin} problem with viscosity contrast.
\newblock {\em Analysis \& PDE}, 16(3):785--838, 2023.

\bibitem{gancedo2020global}
Francisco Gancedo, Rafael Granero-Belinch{\'o}n, and Stefano Scrobogna.
\newblock Global existence in the {Lipschitz} class for the {N-Peskin} problem.
\newblock {\em Indiana University Mathematics Journal}, 72(2):553--602, 2023.

\bibitem{chen2021peskin}
Ke~Chen and Quoc-Hung Nguyen.
\newblock The {Peskin} problem with $\dot{B}^1_{\infty,\infty}$ initial data.
\newblock {\em arXiv preprint arXiv:2107.13854}, 2021.

\bibitem{rodenberg20182d}
Analise Rodenberg.
\newblock {\em {2D} {Peskin} problems of an immersed elastic filament in
  {Stokes} flow}.
\newblock PhD thesis, University of Minnesota, 2018.

\bibitem{cameron2021critical}
Stephen Cameron and Robert~M. Strain.
\newblock Critical local well-posedness for the fully nonlinear {Peskin}
  problem.
\newblock {\em arXiv preprint arXiv:2112.00692}, 2021.

\bibitem{GarciaJuarez2023WellPosednessOT}
Eduardo Garc\'ia-Ju\'arez, Po~Chun Kuo, Yoichiro Mori, and Robert~M. Strain.
\newblock Well-posedness of the {3D} {Peskin} problem.
\newblock {\em arXiv preprint arXiv:2301.12153}, 2023.

\bibitem{li2021stability}
Hui Li.
\newblock Stability of the {Stokes} immersed boundary problem with bending and
  stretching energy.
\newblock {\em Journal of Functional Analysis}, 281(9):109204, 2021.

\bibitem{Tong2022GlobalST}
Jiajun Tong.
\newblock Global solutions to the tangential {Peskin} problem in {2-D}.
\newblock {\em arXiv preprint arXiv:2205.14723}, 2022.

\bibitem{tong2021regularized}
Jiajun Tong.
\newblock Regularized {Stokes} immersed boundary problems in two dimensions:
  Well-posedness, singular limit, and error estimates.
\newblock {\em Communications on Pure and Applied Mathematics}, 74(2):366--449,
  2021.

\bibitem{muskat1934two}
Morris Muskat.
\newblock Two fluid systems in porous media. the encroachment of water into an
  oil sand.
\newblock {\em Physics}, 5(9):250--264, 1934.

\bibitem{Ambrose2004WellposednessOT}
David~M. Ambrose.
\newblock Well-posedness of two-phase {Hele–Shaw} flow without surface
  tension.
\newblock {\em European Journal of Applied Mathematics}, 15:597 -- 607, 2004.

\bibitem{cordoba2011}
Antonio C{\'o}rdoba, Diego C{\'o}rdoba, and Francisco Gancedo.
\newblock Interface evolution: the {Hele-Shaw} and {Muskat} problems.
\newblock {\em Annals of mathematics}, pages 477--542, 2011.

\bibitem{constantin2012global}
Peter Constantin, Diego C{\'o}rdoba, Francisco Gancedo, and Robert~M. Strain.
\newblock On the global existence for the {Muskat} problem.
\newblock {\em Journal of the European Mathematical Society}, 15(1):201--227,
  2013.

\bibitem{cordoba2013porous}
Antonio C{\'o}rdoba, Diego C{\'o}rdoba, and Francisco Gancedo.
\newblock Porous media: the {Muskat} problem in three dimensions.
\newblock {\em Analysis \& PDE}, 6(2):447--497, 2013.

\bibitem{constantin2016muskat}
Peter Constantin, Diego C{\'o}rdoba, Francisco Gancedo, Luis
  Rodr{\'\i}guez-Piazza, and Robert~M. Strain.
\newblock On the {Muskat} problem: global in time results in {2D} and {3D}.
\newblock {\em American Journal of Mathematics}, 138(6):1455--1494, 2016.

\bibitem{deng2017two}
Fan Deng, Zhen Lei, and Fanghua Lin.
\newblock On the two-dimensional {Muskat} problem with monotone large initial
  data.
\newblock {\em Communications on Pure and Applied Mathematics},
  70(6):1115--1145, 2017.

\bibitem{cameron2018global}
Stephen Cameron.
\newblock Global well-posedness for the two-dimensional {Muskat} problem with
  slope less than 1.
\newblock {\em Analysis \& PDE}, 12(4):997--1022, 2018.

\bibitem{Crdoba2018GlobalWF}
Diego C{\'o}rdoba and Omar Lazar.
\newblock Global well-posedness for the {2D} stable {Muskat} problem in
  {$H^{3/2}$}.
\newblock {\em Annales scientifiques de l'{\'E}cole Normale Sup{\'e}rieure},
  2018.

\bibitem{gancedo2019muskat}
Francisco Gancedo, Eduardo Garc\'ia-Ju\'arez, Neel Patel, and Robert~M. Strain.
\newblock On the {Muskat} problem with viscosity jump: Global in time results.
\newblock {\em Advances in Mathematics}, 345:552--597, 2019.

\bibitem{Alazard2020EndpointST}
Thomas Alazard and Quoc-Hung Nguyen.
\newblock Endpoint {Sobolev} theory for the {Muskat} equation.
\newblock {\em Communications in Mathematical Physics}, 397:1043 -- 1102, 2020.

\bibitem{cordoba2009maximum}
Diego C{\'o}rdoba and Francisco Gancedo.
\newblock A maximum principle for the {Muskat} problem for fluids with
  different densities.
\newblock {\em Communications in Mathematical Physics}, 286(2):681--696, 2009.

\bibitem{Gruber2007}
Peter~M. Gruber.
\newblock {\em Convex and discrete geometry}, volume 336 of {\em Grundlehren
  Math. Wiss.}
\newblock Berlin: Springer, 2007.

\bibitem{MR0107214}
G.~Pestov and V.~Ionin.
\newblock On the largest possible circle imbedded in a given closed curve.
\newblock {\em Dokl. Akad. Nauk SSSR}, 127:1170--1172, 1959.

\bibitem{Pankrashkin2015AnIF}
Konstantin Pankrashkin.
\newblock An inequality for the maximum curvature through a geometric flow.
\newblock {\em Archiv der Mathematik}, 105:297--300, 2015.

\end{thebibliography}

\end{document}